\DeclareMathAlphabet{\mathpzc}{OT1}{pzc}{m}{it}
\newcommand{\cC}{{\mathcal C}}
\newcommand{\pa}{\parallel}
\newcommand{\cd}{\cdot}
\newcommand{\li}{l_\infty}
\newcommand{\C}{\mathcal{C}}
\newcommand{\Cp}{\mathcal{C}_P}
\newcommand{\N}{\mathcal{N}}
\newcommand{\Nh}{\mathcal{N}_H}
\newcommand{\Npp}{\mathcal{N}_{P'}}
\newcommand{\I}{\mathcal{I}}
\newtheorem{thm}{Theorem}[section]
\newtheorem{lem}[thm]{Lemma}
\newtheorem{defn}[thm]{Definition}
\newtheorem{prop}[thm]{Proposition}
\newtheorem{cor}[thm]{Corollary}
\begin{document}

\renewcommand{\thefootnote}{\arabic{footnote}}
 	
\title{Synthetic foundations of cevian geometry, IV:\\ The TCC-Perspector Theorem}

\author{\renewcommand{\thefootnote}{\arabic{footnote}}
Igor Minevich and Patrick Morton}
\maketitle

\begin{section}{Introduction.}

In the first three parts of this series of papers (\cite{mm1}, \cite{mm2}, \cite{mm3}), we have studied generalizations of some classical configurations for an ordinary triangle $ABC$ and a variable point $P$ in the extended Euclidean plane, using a mixture of affine and projective methods.  As in previous papers, we assume $P$ does not lie on the sides of $ABC$ or its anticomplementary triangle $K^{-1}(ABC)$, where $K$ is the complement map for $ABC$.  Letting $\iota$ denote the isotomic map for $ABC$, we defined $P'=\iota(P)$ and $Q=K(P')=K \circ \iota(P)$, the latter point being the isotomcomplement of $P$ and a generalization of the incenter.  We also studied what we called the generalized orthocenter $H$ and generalized circumcenter $O$ associated to $ABC$ and $P$.  The point $H$ is defined as the intersection of the lines through $A,B,C$ which are parallel, respectively, to $QD,QE,QF$, where $DEF$ is the cevian triangle of $P$ for $ABC$ (the diagonal triangle of the quadrangle $ABCP$, with $D=AP \cdot BC, E=BP \cdot CA, F = CP \cdot AB$).  The point $O$ is defined similarly, as the intersection of the lines through the midpoints $D_0,E_0,F_0$ of the sides $BC, CA, AB$, which are parallel to $QD, QE,QF$.  It is easy to see that $O=K(H)$.  We proved various relationships between the circumconic $\tilde \cC_O$ of $ABC$ whose center is $O$, the nine-point conics $\Nh$ and $\N_{P'}$ of the quadrangles $ABCH$ and $ABCP'$ with respect to the line at infinity, and the inconic $\mathcal{I}$ which is tangent to the sides of $ABC$ at the points $D,E,F$.  In particular, if $T_P$ denotes the affine map taking $ABC$ to $DEF$, and $T_{P'}$ denotes the affine map taking $ABC$ to $D_3E_3F_3$, the cevian triangle for $P'$, many of these relationships can be expressed in terms of $T_P$ and $T_{P'}$, in combination with the complement map.  For example, the circumconic $\tilde \cC_O=T_{P'}^{-1}(\Npp)$ is the nine-point conic for the quadrangle $Q_aQ_bQ_cQ$, where $Q_aQ_bQ_c=T_{P'}^{-1}(ABC)$ is the anticevian triangle of the point $Q$ with respect to $ABC$, and its center is given by $O=T_{P'}^{-1} \circ K(Q)$.  To take another example, we also showed that the affine map $\textsf{M}=T_P \circ K^{-1} \circ T_{P'}$ is a homothety or translation taking the circumconic $\tilde \cC_O$ to the inconic $\I$.  Its center $S$ is a generalization of the classical insimilicenter. \medskip

In this part of our series on cevian geometry we define the generalized isogonal map $\gamma_P$ for an ordinary triangle $ABC$ and a point $P$ not on the sides of $ABC$ or $K^{-1}(ABC)$.  All the relationships holding between the classical orthocenter and circumcenter also hold for the generalized notions, but with $\gamma_P$ replacing the usual isogonal map $\gamma$ for $ABC$.  Thus, $\gamma_P(O)=H$ and $\gamma_P(l_\infty)=\tilde \cC_O$, i.e. the $P$-isogonal image of the line at infinity is the circumconic $\tilde \cC_O$.  These relationships reduce to the classicial ones when $P=Ge$ is the Gergonne point of $ABC$.  \medskip

We also define the corresponding notions of pedal triangle and pedal conic (with respect to $P$) for a point $R_1$ and its image $\gamma_P(R_1)=R_2$, and show that $\gamma_P$ is a reciprocal conjugation, in the sense of \cite{dvl}.  It turns out that the pole of this reciprocal conjugation, defined by Dean and van Lamoen \cite{dvl}, is the isotomcomplement of our generalized orthocenter $H$.  In addition, our development ties together and gives a synthetic context for the papers \cite{dau}, \cite{dvl}, \cite{fg}, and \cite{gu}.  \medskip

The main result of this paper is the TCC-Perspector Theorem in Section 4, which says that the (classical) isogonal conjugate $\gamma(H)$ is the perspector of the tangential triangle (T) of $ABC$ and the circumcevian triangle (CC) of the isogonal conjugate $\gamma(Q)$ of $Q$, both taken with respect to the circumcircle of $ABC$.  This relates the points $H$ and $Q$, which are the generalizations of the orthocenter and incenter, respectively, to the tangential and circumcevian triangles in the classical sense.  All but a few of our results are aimed at proving this theorem.  \medskip

Along the way we also prove various other results, such as the relation $\gamma_P(P)=S$ for the center of the map $\textsf{M}$ mentioned above (see \cite{mm3}, Theorem 3.4), and the relations $\gamma_P = \delta_P \circ \iota \circ \delta_P=\delta_H$ between $\gamma_P$ and the map $\delta_P$ defined in Part I (\cite{mm1}, Corollary 3.6).  We also prove a natural generalization of Simson's Theorem (Theorem \ref{thm:Sim}) in this context.  The proof of this theorem, as well as the proofs for the properties of the pedal triangles and pedal conics, are heavily projective, and give alternate approaches to the classical theorems that they generalize.  Finally, we discuss the point $\gamma_P(G)$, which is a generalization of the symmedian point, in Theorem \ref{thm:persG}.  \medskip

As in our previous papers, we refer to \cite{mm1}, \cite{mm2}, and \cite{mm3} as I, II, and III, respectively.  The generalized orthocenter and circumcenter were first defined in \cite{mmq}, and that paper also contains proofs of the affine formulas for $H$ and $O$.  All of our proofs are synthetic, except for the proof of Proposition \ref{prop:Q^2}, in which we make very limited use of barycentric coordinates to relate $\gamma_P$ to the discussion in \cite{dvl}.  We will give a synthetic proof of this connection elsewhere.  We do, however, give a synthetic proof of Corollary \ref{cor:Q2}, using the relationship proved in Theorem \ref{thm:gammadelta}. We do not make use of the results of \ref{prop:Q^2} and \ref{cor:Q2} anywhere else in the paper.

\end{section}

\begin{section}{The generalized isogonal map.}

Given a point $P$ with cevian triangle $DEF$ inscribed in triangle $ABC$, we consider the affine reflections
\[h_a \ \text{about the line $AQ$ in the direction} \ a = EF,\]
\[h_b \ \text{about the line $BQ$ in the direction} \ b = DF,\]
\[h_c\  \text{about the line $CQ$ in the direction} \ c = DE.\]
Thus, $h_a(X) = X'$, where $XX'$ is parallel to $EF$ and $XX' \cd AQ$ is the midpoint of $XX'$. In this situation the lines $AX$ and $AX'$ are harmonic conjugates with respect to $AQ$ and the line $l_a$ through $A$ which is parallel to $EF$.  From I, Theorem 3.9 and I, Corollary 3.11(a), the line $l_a=T_{P'}^{-1}(BC)$.

\begin{defn}
We define the {\bf generalized isogonal map} $\gamma_P$ as follows: for a given $X \ne A, B, C$, let $\gamma_P(X)$ be the intersection of the lines $Ah_a(X)$, $Bh_b(X)$, and $Ch_c(X)$.  (See Figure \ref{fig:2.1}.)
\end{defn}
If $G$ is the centroid, then $\gamma_G=\iota$ is the isotomic map, while if $P=Ge$ is the Gergonne point, $Q$ is the incenter, and $\gamma_P$ is the isogonal map.  This definition depends on the following theorem.

\begin{thm}
\label{thm:gammaP}
For any $R \ne A, B, C$, the lines $Ah_a(R), Bh_b(R), Ch_c(R)$ are concurrent.
\end{thm}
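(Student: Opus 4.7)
The plan is to reinterpret each affine reflection $h_a$ as a projective involution on the pencil of lines through $A$, and then reduce the concurrency to two applications of Ceva's theorem. As noted in the paragraph preceding the definition, $Ah_a(R)$ is the image of $AR$ under the involution $\sigma_a$ on the pencil through $A$ whose fixed lines are $AQ$ and $l_a$, and analogously at $B$ and $C$. First I would identify $\sigma_a$ more conveniently as the unique involution in the pencil through $A$ that fixes $AQ$ and interchanges $AB$ with $AC$. For this it suffices to prove the harmonic relation
\[
(AB, AC;\, AQ, l_a) = -1.
\]

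To establish this synthetically, I would use the identification $l_a = T_{P'}^{-1}(BC) = Q_bQ_c$, so that $l_a$ is the side of the anticevian triangle $Q_aQ_bQ_c = T_{P'}^{-1}(ABC)$ of $Q$ that passes through $A$. The four points $\{Q, Q_a, Q_b, Q_c\}$ form a complete quadrangle, and inspection of its six sides --- using that $ABC$ is the cevian triangle of $Q$ with respect to $Q_aQ_bQ_c$ --- shows that $A, B, C$ are precisely its three diagonal points, while the two sides of this quadrangle through the diagonal point $A$ are exactly $Q_bQ_c = l_a$ and $Q_aQ = AQ$. The classical harmonic property of the complete quadrangle, applied on the side $BC$ of its diagonal triangle, then yields $(B, C;\, l_a\cd BC,\, AQ\cd BC) = -1$, and projecting from $A$ produces the required harmonic pencil. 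Hence $\sigma_a(AB) = AC$, and similarly $\sigma_b(BC) = BA$ and $\sigma_c(CA) = CB$.

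For the concluding step I would project $\sigma_a$ from $A$ onto the line $BC$, obtaining an involution $\tau_a$ on $BC$ that swaps $B \leftrightarrow C$ and fixes $\hat Q_a := AQ \cd BC$. In the signed-ratio coordinate $s = \overline{BX}/\overline{XC}$ (so that $B, C$ are at $s = 0, \infty$), every projective involution swapping $s=0$ and $s=\infty$ has the form $s \mapsto k/s$, and the fixed point at $\hat Q_a$ forces $k = \bigl(\overline{B\hat Q_a}/\overline{\hat Q_a C}\bigr)^{2}$. Therefore, writing $R_a = AR \cd BC$ and $R_a^* = Ah_a(R) \cd BC$,
\[
\frac{\overline{BR_a}}{\overline{R_aC}} \cdot \frac{\overline{BR_a^*}}{\overline{R_a^*\, C}} = \left(\frac{\overline{B\hat Q_a}}{\overline{\hat Q_a C}}\right)^{\!2},
\]
with analogous identities at $B$ and $C$.

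Finally I would multiply the three identities and apply Ceva's theorem twice. On the left, the product of the unstarred ratios equals $1$ by Ceva applied to the concurrent cevians $AR_a, BR_b, CR_c$ meeting at $R$, and on the right the product of the squared $\hat Q$-ratios equals $1^2 = 1$ because it is the square of the Ceva product for the concurrent cevians $A\hat Q_a, B\hat Q_b, C\hat Q_c$ meeting at $Q$. Hence the product of the three starred ratios equals $1$, and Ceva's converse gives concurrency of $Ah_a(R), Bh_b(R), Ch_c(R)$. The main obstacle is the harmonic identification in the first step; once that is in hand, the proof runs without any sign ambiguity precisely because each vertex contributes the \emph{square} of a $\hat Q$-ratio, which automatically guarantees a $+1$ on the right-hand side of the final Ceva product.
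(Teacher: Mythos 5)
Your proof is correct, and its overall architecture --- reduce the concurrency to Ceva's theorem applied twice, at $R$ and at $Q$, via a per-vertex identity saying that the trace of $Ah_a(R)$ on $BC$ is obtained from the trace of $AR$ by a projective involution swapping $B \leftrightarrow C$ and fixing $AQ \cdot BC$ --- is the same as the paper's: your identity $s\,s^{*} = q^{2}$ is literally the paper's statement that $A(BC, R_aQ) = 1/A(BC,RQ)$. Where you genuinely diverge is in how that per-vertex identity is established. The paper sections the pencil at $A$ by the line $EF$ and computes directly: since $h_a$ is an affine reflection in the direction $EF$ whose axis $AQ$ meets $EF$ at its midpoint $A_0$ (I, Theorem 2.4), it carries the ratio $FS_a/S_aE$ to its reciprocal, so the cross-ratio inverts. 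You instead identify the induced map on the pencil at $A$ as the harmonic involution with fixed lines $AQ$ and $l_a$, and prove that it interchanges $AB$ and $AC$ by the harmonic property of the complete quadrangle $QQ_aQ_bQ_c$, whose diagonal triangle is $ABC$; this step is correct, since the two sides $QQ_a$ and $Q_bQ_c$ of that quadrangle cut $BC$ in $AQ \cdot BC$ and $l_a \cdot BC$, which are indeed harmonic with respect to $B, C$. Your route is more projective in flavor and makes the anticevian triangle of $Q$ do the work (a theme the paper exploits later, e.g.\ in identifying the fixed points of $\gamma_P$), at the cost of importing the quadrangle machinery; the paper's computation is shorter and rests only on the midpoint property of $A_0$ on $EF$. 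Both arguments leave the same routine degenerate cases ($R$ on a side of $ABC$, or concurrence at an infinite point) implicit.
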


\begin{proof}
We first state Ceva's theorem in terms of cross-ratios. Let $AR \cd BC = R_1, BR \cd AC = R_2, CR \cd AB = R_3$. We have the cross-ratios
\[A(BC, RQ) = (BC, R_1D_2) = \frac{BR_1}{R_1C} \div \frac{BD_2}{D_2C},\]
\[B(CA, RQ) = (CA, R_2E_2) = \frac{CR_2}{R_2A} \div \frac{CE_2}{E_2A},\]
\[C(AB, RQ) = (AB, R_3F_2) = \frac{AR_3}{R_3B} \div \frac{AF_2}{F_2B}.\]
The condition of Ceva's theorem is therefore equivalent to
\[A(BC, RQ) \cd B(CA, RQ) \cd C(AB, RQ) = 1.\]
On the other hand, if $R_a = h_a(R), R_b = h_b(R), R_c = h_c(R)$, and $S_1 = AR \cd EF, S_a = AR_a \cd EF$, then
\begin{align*}
A(BC, R_aQ) &= (FE, S_aA_0) = \frac{FS_a}{S_aE} = \frac{h_a(FS_a)}{h_a(S_aE)}\\
&= \frac{S_1E}{FS_1} = \frac{1}{(FE, S_1A_0)} = \frac{1}{A(BC, RQ)},
\end{align*}
and similarly
\[B(CA, R_bQ) = \frac{1}{B(CA, RQ)} \quad \text{and} \quad C(AB, R_cQ) = \frac{1}{C(AB, RQ)}.\]
It follows that $1 = A(BC, R_aQ) \cd B(CA, R_bQ) \cd C(AB, R_cQ)$, which implies that the lines $AR_a, BR_b, CR_c$ are concurrent.
\end{proof}

\begin{figure}
\[\includegraphics[width=4.5in]{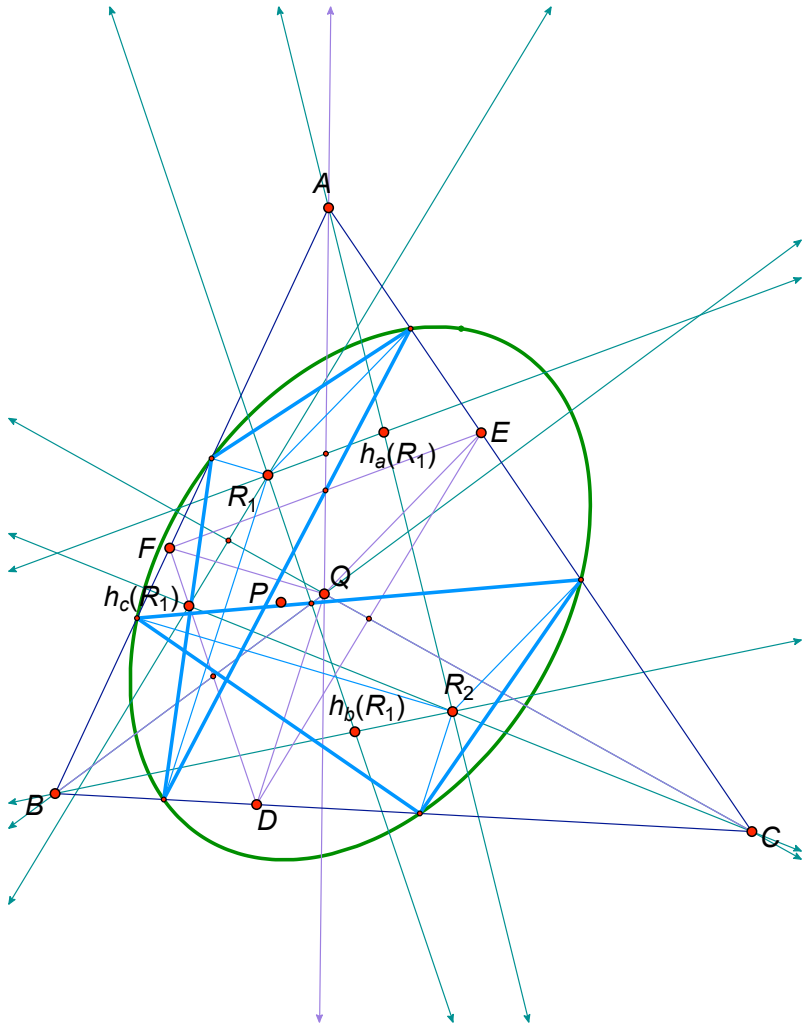}\]
\caption{Pedal triangles and pedal conic for $R_1$ and $R_2=\gamma_P(R_1)$}
\label{fig:2.1}
\end{figure}
\bigskip

It is clear that the map $\gamma_P$ fixes the point $Q$. It is also not difficult to verify that $\gamma_P$ fixes the vertices of the anticevian triangle $Q_aQ_bQ_c=T_{P'}^{-1}(ABC)$ of $Q$ with respect to $ABC$. This is because the sides of the anticevian triangle of $Q$ are parallel to the sides of triangle $DEF$ (see I, Theorem 3.9), so the side through $B$, for instance, is fixed by the map $h_b$. Since the side through $B$ intersects $CQ$ at the vertex $Q_c$, and $CQ$ is fixed by $h_c$, it is clear that $\gamma_P$ fixes $Q_c=Q_cQ_a \cdot CQ$. The same argument applies to the other vertices. It is easy to see that these four points are the only fixed points of $\gamma_P$.  \medskip

As an application we first show the following.

\begin{prop}
\label{prop:PS}
If $S$ is the center of the map $\textsf{M}$ (III, Theorem 3.4), then $\gamma_P(P)=S$.
\end{prop}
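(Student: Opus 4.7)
The plan is to verify that $S$ lies on each of the three concurrent lines $Ah_a(P)$, $Bh_b(P)$, $Ch_c(P)$; by Theorem~\ref{thm:gammaP} this forces their common point $\gamma_P(P)$ to equal $S$. By the symmetric construction in $A,B,C$ it suffices to argue for $Ah_a(P)$.

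By the definition of $h_a$, the line $Ah_a(P)$ is the fourth harmonic of $AP = AD$ with respect to $AQ$ and $l_a$ in the pencil at $A$. On the other hand, since $S$ is the center of the homothety $\textsf{M}$ carrying $\tilde\cC_O$ to $\I$ (III, Theorem 3.4) and $A \in \tilde\cC_O$, the image $A^* := \textsf{M}(A) \in \I$ is collinear with $A$ and $S$; so the line $AS$ coincides with the secant $AA^*$ of $\I$, and the task becomes to show that $AA^*$ realizes this fourth harmonic.

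To reduce the harmonic condition to an affine one, I intersect the pencil at $A$ with the polar of $A$ with respect to $\I$, which is the line $EF$. Setting $Z_D = AD \cap EF$, $Z_Q = AQ \cap EF$, $Z_S = AA^* \cap EF$, and observing that $l_a \cap EF$ is the point at infinity on $EF$ (since $l_a \parallel EF$), the pencil cross-ratio $(AD, AQ; AA^*, l_a) = -1$ translates into the affine midpoint condition that $Z_Q$ is the midpoint of $Z_D$ and $Z_S$ on $EF$. Now $Z_Q$ coincides with the midpoint of $E$ and $F$ itself, by the classical fact that the line from an exterior point through the center of a conic bisects the chord of contact: $A$ is exterior to $\I$ with chord of contact $EF$, and $Q$ is the center of $\I$ (its polar being the line at infinity). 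So it remains to show that $Z_S$ is the reflection of $Z_D$ through the midpoint of $EF$.

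The main obstacle is this final reflection property. My planned route is to compute $A^* = T_P(K^{-1}(T_{P'}(A))) = T_P(K^{-1}(D_3))$ by tracking each factor: $T_{P'}(A) = D_3$ is the foot of the cevian from $A$ through $P'$; $K^{-1}(D_3)$, the anticomplement of $D_3$, lies on the side of the anticomplementary triangle through $A$ (since $A$ is the midpoint of that side); and $T_P$ then sends this to a point $A^* \in \I$ lying on a specific line through $D = T_P(A)$. Using the identifications of these maps on sidelines from I and III pins down $AA^* \cap EF$ and confirms the symmetry with $Z_D$ across the midpoint of $EF$. By the symmetric construction at $B$ and $C$, the three lines $Vh_V(P)$ concur at $S$, giving $\gamma_P(P) = S$.
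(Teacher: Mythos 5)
The architecture of your argument is sound and close in spirit to the paper's: the paper also reduces everything to showing that the line $Ah_a(P)=Ah_a(D)$ passes through $S$ via the point $\textsf{M}(A)\in\I$ collinear with $A$ and $S$. Your reduction of the harmonic condition to ``$Z_S$ is the reflection of $Z_D$ through the midpoint of $EF$'' is correct, and your identification of $Z_Q$ with the midpoint of $EF$ is legitimate (it is I, Theorem 2.4: $AQ$ is the diameter of $\I$ conjugate to the direction $EF$). The problem is that the one step carrying all the content --- locating $A^*=\textsf{M}(A)$ precisely enough to compute $Z_S=AA^*\cdot EF$ --- is only announced, not executed. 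You say $T_P$ sends $K^{-1}(D_3)$ to ``a point $A^*\in\I$ lying on a specific line through $D$'' and that tracking the maps ``confirms the symmetry,'' but you never identify that line or carry out the confirmation. That is the whole proof; as written, the proposal is a plan with its decisive step missing.

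The gap is fillable along exactly the route you sketch, and it is worth seeing how, because it also shows your harmonic detour is unnecessary. Since $A\in K^{-1}(BC)$ and $D_3\in BC$, the point $K^{-1}(D_3)$ lies on $K^{-1}(BC)$, the line through $A$ parallel to $BC$; applying the affine map $T_P$ (which sends $BC$ to $EF$ and $A$ to $D$) shows $A^*=T_P(K^{-1}(D_3))$ lies on the line through $D$ parallel to $EF$. (The paper gets the same fact more cleanly by computing $\textsf{M}^{-1}(D)=T_{P'}^{-1}(D_0)=$ the midpoint of $Q_bQ_c$, which lies on the line through $A$ parallel to $EF$, and then applying the homothety $\textsf{M}$.) But once you know $DA^*\parallel EF$ with both $D$ and $A^*$ on $\I$, the chord $DA^*$ is bisected by the conjugate diameter $AQ$, so $h_a(D)=A^*$ outright and $Ah_a(P)=AA^*=AS$ --- no need to section the pencil by $EF$ at all. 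Finally, you should handle the degenerate possibilities the paper treats explicitly: the case $\textsf{M}(A)=D$ (where $A\textsf{M}^{-1}(D)$ is not a well-defined line and one must fall back on the other two vertices, or on $P=P'=G$), and the observation that $S$ is never a vertex, which is needed for $\gamma_P(P)=S$ to make sense as an intersection of the three lines.
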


\noindent {\bf Remark}. This verifies the assertion made after III, Theorem 3.4, that $S$ coincides with the point $\gamma_P(P)$.

\begin{proof}
We note first that $\textsf{M}^{-1}(D)=T_{P'}^{-1} \circ K \circ T_P^{-1}(D)=T_{P'}^{-1} \circ K(A)=T_{P'}^{-1}(D_0)$, which is the midpoint of the side $Q_bQ_c$ of the anticevian triangle of $Q$ containing the point $A$ (I, Corollary 3.11 and III, Section 2).  If $\textsf{M}^{-1}(D) \neq A$, this implies that $A\textsf{M}^{-1}(D) = Q_bQ_c$ is parallel to $EF$ (I, Theorem 3.9).  Therefore, $\textsf{M}(A\textsf{M}^{-1}(D)) = \textsf{M}(A)D$ is parallel to $EF$ as well, since $\textsf{M}$ is a homothety or translation (III, Theorem 3.4).  Thus $\textsf{M}(A)D$ lies on $EF \cdot l_\infty$.  However, both $D$ and $\textsf{M}(A)$ lie on the inconic $\mathcal{I}$ (III, Theorem 3.4).   Hence, the midpoint of $\textsf{M}(A)D$ lies on $AQ$, which is the conjugate diameter for the direction $EF$ (by I, Theorem 2.4).  This gives that $h_a(D) = \textsf{M}(A)$, so $h_a(AP) = h_a(AD) = A\textsf{M}(A) = AS$.  If $\textsf{M}(B) \neq E$ and $\textsf{M}(C) \neq F$ we also get $h_b(BP)=BS$ and $h_c(CP)=CS$.  Hence, $\gamma_P(P)=S$.  (Note that $S$ is never a vertex for the points $P$ that we are considering.  If $S=A$, for example, then $A = \textsf{M}(A)$ lies on the inconic, which is impossible, since the inconic can't be tangent to a side of $ABC$ at a vertex.)  \smallskip

If $\textsf{M}(A)=D$, but $\textsf{M}(B) \neq E$ and $\textsf{M}(C) \neq F$, then we still have $\gamma_P(P) = h_b(BP) \cdot h_c(CP) = BS \cdot CS = S$.  On the other hand, if $D = \textsf{M}(A)$ then $\textsf{M}(A)$, being symmetrically defined with respect to $P$ and $P'$ ($\textsf{M}=T_P \circ K^{-1} \circ T_{P'} = T_{P'} \circ K^{-1} \circ T_P$ by III, Proposition 3.12 and Lemma \ref{lem:commute} in the Appendix below), lies on both inconics $\mathcal{I}=\mathcal{I}_P=\textsf{M}(\tilde \cC_O)$ and $\mathcal{I}'=\mathcal{I}_{P'}=\textsf{M}(\tilde \cC_{O'})$. Since it also lies on $BC$, which is tangent to both inconics, $\textsf{M}(A)$ equals both $D=D_1$ and $D_3$, so $P$ and $P'$ lie on the median $AD_0$.  If, in addition, $E = \textsf{M}(B)$, then in the same way $E = E_3$, implying that $P = P' = G$, in which case $Q = G$ and $\gamma_P(P) = \gamma_P(G) = G = S$.
\end{proof}

We also note the following relationship between the map $\gamma_P$ and the map $\delta_P$ from I, Corollary 3.6.  Let $\iota$ represent the isotomic map for $ABC$.  Recall from I, Theorem 3.8 that the point $X$ is the perspector of triangles $ABC$ and $A_3B_3C_3$, and is the center of the map $\mathcal{S}=T_P \circ T_{P'}$.

\begin{prop}
\label{prop:gpdp}
\begin{enumerate}[(a)]
\item For any point $P$ not on the sides of $ABC$ or its anticomplementary triangle, $\gamma_P = \delta_P \circ \iota \circ \delta_P$.
\item The center $X$ of the map $\mathcal{S}=T_P \circ T_{P'}$ satisfies $\gamma_P(X)=Q'$.
\item The point $S$ satisfies $\delta_P(S)=\iota(Q')$.
\end{enumerate}
\end{prop}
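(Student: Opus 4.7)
The plan is to prove (a) first and then derive (b) and (c) as short consequences.

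For (a), I would exploit that both $\gamma_P$ and $\iota$ are reciprocal conjugations, hence birational involutions of degree two. Since $\delta_P$ is a collineation (from I, Corollary 3.6), the conjugate $\delta_P \circ \iota \circ \delta_P$ is again a birational involution, and two such involutions coincide once we verify that they share the same (generically four) fixed points. The fixed points of $\gamma_P$ are $Q, Q_a, Q_b, Q_c$, as noted just after Theorem \ref{thm:gammaP}, while those of $\iota$ are the centroid $G$ together with three further $ABC$-determined points. It therefore suffices to show that $\delta_P$ maps $\{Q, Q_a, Q_b, Q_c\}$ bijectively onto the fixed-point set of $\iota$; this should follow from the explicit description of $\delta_P$ in I combined with the identity $T_{P'}^{-1}(ABC) = Q_aQ_bQ_c$. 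A more hands-on alternative is to rerun the cross-ratio calculation of Theorem \ref{thm:gammaP}: setting $R^* = \delta_P(\iota(\delta_P(R)))$, one checks that $\iota \circ \delta_P$ reciprocates each of the cross-ratios $A(BC,RQ)$, $B(CA,RQ)$, $C(AB,RQ)$ just as $h_a, h_b, h_c$ do, using that $\delta_P$, being a collineation, preserves all cross-ratios.

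For (b), I would apply (a) to $X$: since $\gamma_P(X) = \delta_P(\iota(\delta_P(X)))$, the task reduces to identifying $\delta_P(X)$, using the description of $X$ as the perspector of $ABC$ and $A_3B_3C_3$ (I, Theorem 3.8), then taking the isotomic conjugate, then applying $\delta_P$ once more. Because perspector and cevian structure behave cleanly under both $\iota$ and $\delta_P$, the three-step chain should collapse to $Q'$ after routine bookkeeping with the maps $T_P$, $T_{P'}$, and $K$ from I.

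For (c), I combine (a) with Proposition \ref{prop:PS}: from $\gamma_P(P)=S$ we get $\delta_P(\iota(\delta_P(P))) = S$. Assuming $\delta_P$ is an involution (which is natural from the symmetric shape of (a), and which should be explicit from I, Corollary 3.6), applying $\delta_P$ to both sides gives $\iota(\delta_P(P)) = \delta_P(S)$, so it suffices to verify $\delta_P(P) = Q'$, which should be a direct check from the definitions of $\delta_P$ and $Q'$ in I. The principal obstacle is the fixed-point matching in (a); once that is secured, (b) and (c) follow in a line or two each.
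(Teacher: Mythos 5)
Your proof of part (a) rests on a false premise: $\delta_P$ is not a collineation. As the paper's own proof of Theorem \ref{thm:gammadelta} recalls, $\delta_P(X)$ is constructed as the intersection of $A\pi_A(AX\cdot BC)$, $B\pi_B(BX\cdot AC)$, $C\pi_C(CX\cdot AB)$ for three involutions $\pi_A,\pi_B,\pi_C$ on the sides of $ABC$; it is a quadratic (Cremona) involution of exactly the same type as $\gamma_P$ itself, not a projective transformation. (Indeed, if $\delta_P$ were a collineation, the identity $\gamma_P=\delta_P\circ\iota\circ\delta_P=\delta_H$ could not hold, since $\delta_H$ carries lines to circumconics by Lemma \ref{lem:mapl}.) This sinks both of your routes for (a): in the first, $\delta_P\circ\iota\circ\delta_P$ is not automatically of degree two, and even granting that, two quadratic involutions sharing four fixed points need not coincide unless one also knows that both are reciprocal conjugations with respect to the \emph{same} triangle $ABC$ --- a fact you would have to prove for the composite, and which for $\gamma_P$ itself is only established later (Proposition \ref{prop:Q^2}, and there only by coordinates); in the second, the claim that $\delta_P$ ``preserves all cross-ratios'' is unavailable. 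You would also still owe the verification that $\delta_P$ carries $\{Q,Q_a,Q_b,Q_c\}$ onto the fixed points of $\iota$, which you only assert ``should follow.''

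What the paper actually does is work side by side rather than globally: for points $P_1,P_2$ with traces $D_1',D_2'$ of $\delta_P(P_1),\delta_P(P_2)$ on $BC$, one shows that $h_a(AP_1)=AP_2$ holds if and only if $D_0$ is the midpoint of $D_1'D_2'$, i.e.\ exactly the isotomic condition on that side; running over the three sides gives $\gamma_P(P_1)=P_2$ if and only if $\delta_P(P_2)=\iota(\delta_P(P_1))$, and (a) follows because $\delta_P$ is an involution. Your parts (b) and (c) are structurally correct once (a) is in hand, but both hinge on the specific values $\delta_P(X)=P'$ and $\delta_P(P)=Q'$ from I, Corollary 3.6, which you defer to ``routine bookkeeping'' and ``a direct check'' rather than citing or proving; with those supplied, (b) is the one-line chain $\gamma_P(X)=\delta_P\circ\iota(P')=\delta_P(P)=Q'$, and your derivation of (c) matches the paper's.
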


\begin{proof}
Let $P_1$ and $P_2$ be points distinct from the vertices of $ABC$, let $D_1, D_2$ be their traces on $BC$, and let $D_1', D_2'$ be the traces of $\delta_P(P_1)$ and  $\delta_P(P_2)$ on $BC$.  Then $h_a(AP_1) = AP_2$ if and only if $A_0$ is the midpoint of $T_P(D'_1D'_2)$, and this holds if and only if $D_0$ is the midpoint of $D_1'D_2'$.  The corresponding relations will hold for all the sides of $ABC$ if and only if $\gamma_P(P_1) = P_2$, and also if and only if $\delta_P(P_2)$ is the isotomic conjugate of $\delta_P(P_1)$.  Hence, using the fact that $\delta_P$ is an involution, $\gamma_P(P_1)=P_2=\delta_P \circ \iota \circ \delta_P(P_1)$.  This proves (a).  Part (b) is immediate from this and I, Corollary 3.6, since $\gamma_P(X)=\delta_P \circ \iota \circ \delta_P(X)=\delta_P \circ \iota(P')=\delta_P(P) =Q'$.  For part (c), we have $\delta_P(S) = \iota \circ \delta_P \circ \gamma_P(S)=\iota \circ \delta_P(P)=\iota(Q')$ from Proposition \ref{prop:PS}.
\end{proof}
\smallskip

Proposition \ref{prop:gpdp}(b) shows that $X$ is the generalized isogonal conjugate of the generalized Mittenpunkt: if $P$ is the Gergonne point, then $Q' =X(9)$ is the Mittenpunkt, and $X = X(57)$ in Kimberling's {\it Encyclopedia} \cite{ki2}. This also provides a new proof of the fact that the $X(7)$-ceva conjugate of the incenter (see I, Theorem 3.10) is the isogonal conjugate of the Mittenpunkt.

\begin{defn}
Assume that the point $P$ does not lie on $\iota(\li)$, so that $Q$ is an ordinary point. The vertices of the {\bf pedal triangle for a point $R$ with respect to $P$} are the intersections with the sides $BC, AC, AB$ of lines through $R$ parallel, respectively, to $QD, QE, QF$.  (See Figures \ref{fig:2.1} and \ref{fig:2.3}.)
\end{defn}
\smallskip

\begin{thm}
\label{thm:PedalTriangles}
If the points $R_1$ and $R_2 = \gamma_P(R_1)$ are ordinary points, different from any of the points in the set $\{Q,Q_a,Q_b,Q_c\}$, then their pedal triangles with respect to $P$ are inscribed in a common conic, called the {\bf pedal conic} for $R_1$ and $R_2$ with respect to $P$.
\end{thm}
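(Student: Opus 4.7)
The plan is to establish that the six pedal-triangle vertices lie on a common conic via the converse of Carnot's theorem for conics. Label $U_i\in BC$, $V_i\in CA$, $W_i\in AB$ as the vertices of the pedal triangle of $R_i$ with respect to $P$, for $i=1,2$. Carnot's criterion states that these six points lie on a common conic if and only if
\[
\frac{\overline{BU_1}\cdot\overline{BU_2}}{\overline{U_1C}\cdot\overline{U_2C}}\cdot\frac{\overline{CV_1}\cdot\overline{CV_2}}{\overline{V_1A}\cdot\overline{V_2A}}\cdot\frac{\overline{AW_1}\cdot\overline{AW_2}}{\overline{W_1B}\cdot\overline{W_2B}}=1
\]
with signed affine ratios on the sidelines, so the task reduces to establishing this identity from the hypothesis $R_2=\gamma_P(R_1)$.

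I would compute each factor by chaining two natural perspectivities. The parallel projection $\pi_a$ in the direction $QD$ onto $BC$ sends $R_i\mapsto U_i$ and fixes $B,C$, so the ratio $\overline{BU_i}/\overline{U_iC}$ is encoded in the position of $R_i$ under this projection. Second, the perspectivity from $A$ onto $EF$, used already in the proof of Theorem \ref{thm:gammaP}, converts the pencil at $A$ into cross-ratios on $EF$: $A(B,C;R_i,Q)=(F,E;S_i,A_0)$ with $S_i=AR_i\cd EF$ and $A_0=AQ\cd EF$. Composing these perspectivities through $R_i$ expresses the $a$-factor $\overline{BU_i}/\overline{U_iC}$ as a cross-ratio on $EF$, and analogous formulas hold at the vertices $B,C$ on the sides $DF$ and $DE$.

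The defining property of $\gamma_P$ now enters as the three harmonic conditions $(AR_1,AR_2;AQ,\ell_a)=-1$ and its cyclic analogues at $B,C$, where $\ell_a,\ell_b,\ell_c$ are the lines through $A,B,C$ parallel respectively to $EF,DF,DE$. Each harmonic condition, read on the appropriate side of $DEF$, pins down the product of the two $R_i$-cross-ratios at that vertex to a fixed quantity determined only by the cevian configuration $(ABC,P)$. Multiplying cyclically over the three vertices, the $R_i$-dependence cancels, and the residual collapses to $1$ by Ceva's theorem applied to the cevians through $P$.

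The principal obstacle is bridging the two distinct directions used by the two perspectivities: $\pi_a$ uses direction $QD$, whereas the harmonic conjugacy coming from $\gamma_P$ is controlled by direction $EF$. Reconciling these on each side of $ABC$ --- i.e.\ expressing the parallel-projection ratio in terms of the reference point $A_0$ and the variable point $S_i$ on $EF$, with correct signs of affine ratios --- is where most of the computational work lies. Once the two structures are aligned on each of the three sides, the cyclic telescoping that assembles the Carnot identity is formal.
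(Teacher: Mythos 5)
There is a genuine gap, and it sits exactly where you place ``most of the computational work.'' Carnot's criterion needs the ratios $\overline{BU_i}/\overline{U_iC}$ of the \emph{pedal feet}, which are parallel projections of $R_i$ in the direction $QD$, whereas everything you propose to extract from the definition of $\gamma_P$ --- the harmonic conditions at the vertices and the cross-ratios $(F,E;S_i,A_0)$ on $EF$ with $S_i=AR_i\cdot EF$ --- lives in the pencil at $A$ and therefore only sees the \emph{trace} $AR_i\cdot BC$. These are different data: two points on the same line through $A$ have the same $S_i$ but different pedal feet on $BC$ (in coordinates, $\overline{BU_i}/\overline{U_iC}$ depends on the distance of $R_i$ from $BC$, not just on its trace), so the claimed ``composition of perspectivities'' expressing the $a$-factor as a cross-ratio on $EF$ does not exist. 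Bridging the pedal directions $QD,QE,QF$ to the reflection data is not a matter of sign bookkeeping; it requires the substantive fact (I, Theorem 2.4) that $AQ,BQ,CQ$ bisect $EF,DF,DE$, so that $h_a$ swaps the directions $QE\leftrightarrow QF$, $h_b$ swaps $QD\leftrightarrow QF$, and $h_c$ swaps $QD\leftrightarrow QE$ --- which is precisely the engine of the paper's own proof.

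That said, your target identity is true, and the strategy is salvageable: once one knows that $\gamma_P$ is a reciprocal conjugation with pole $K\circ\iota(H)$ (Proposition \ref{prop:Q^2}, Corollary \ref{cor:Q2}), the six Carnot factors can be computed explicitly and their product telescopes to $1$, giving an alternative (coordinate) proof. The paper instead argues projectively: it uses the direction-swapping property above to obtain the similarity $R_1D_1F_1\sim R_2JK$, concludes that the cross-joins such as $F_1D_2\cdot F_2D_1$ lie on the line $R_1R_2$, and finishes with the converse of Pascal's theorem applied to the hexagon $D_1E_2F_1D_2E_1F_2$ --- a genuinely different route from Carnot, and one that avoids signed-ratio computations entirely. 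To complete your version you must either supply the missing synthetic link on each side or switch to the barycentric formulation; as written, the final cyclic telescoping is asserted, not proved.
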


\begin{proof}
(See Figures \ref{fig:2.1} and \ref{fig:2.2}.)  Let $D_1, D_2$ be the points on $BC$ such that $R_1D_1$ and $R_2D_2$ are parallel to $QD$, and $F_1, F_2$ be the points on $AB$ such that $R_1F_1$ and $R_2F_2$ are parallel to $QF$. Furthermore, let
\[G = AB \cd R_1D_1, H = BC \cd R_1F_1, J = AB \cd R_2D_2, K = BC \cd R_2F_2;\]
and set $S = h_b(R_2)$ (on $BR_1$), $D' = h_b(D_2)$ (on $AB$), $J' = h_b(J) = BC \cd SD'$. \smallskip

\begin{figure}
\[\includegraphics[width=5in]{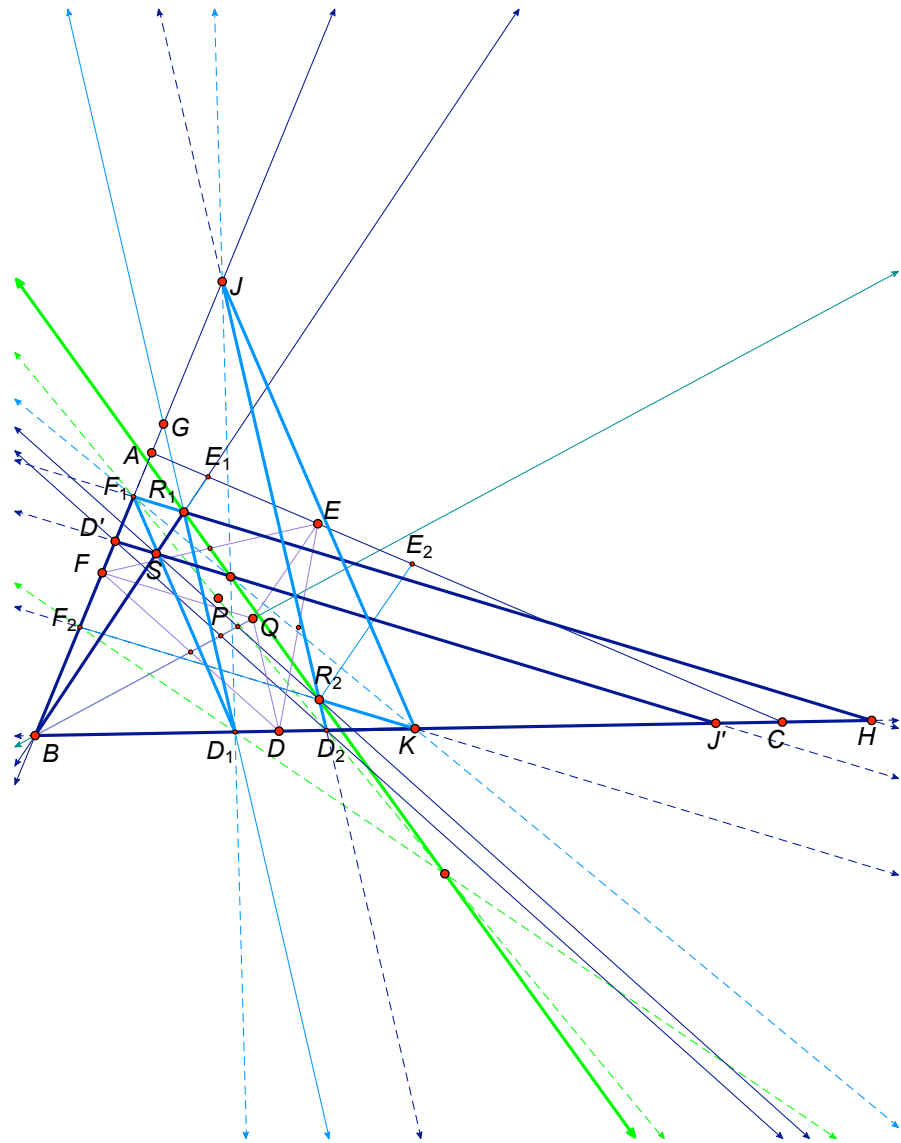}\]
\caption{Proof of Theorem \ref{thm:PedalTriangles}}
\label{fig:2.2}
\end{figure}

{\it Case 1.} The points $G, H, J, K$ are ordinary. Then $SD' \pa h_b(QD) = QF$, since $BQ \cdot DF=B_0$ is the midpoint of $DF$ (I, Theorem 2.4). It follows that $BR_1F_1 \sim BSD'$ and $BR_1H \sim BSJ'$, and therefore
\begin{equation}
\label{eqn:5.1}
\frac{R_1F_1}{R_1H} = \frac{R_1F_1/BR_1}{R_1H/BR_1} = \frac{SD'/BS}{SJ'/BS} = \frac{SD'}{SJ'} = \frac{h_b(R_2D_2)}{h_b(R_2J)} = \frac{R_2D_2}{R_2J}.
\end{equation}
We also have that $R_1D_1H \sim R_2D_2K$ so that $(R_1D_1) \cd (R_2K) = (R_2D_2) \cd (R_1H)$. Putting this together with (\ref{eqn:5.1}) shows that $(R_1D_1) \cd (R_2K) = (R_1F_1) \cd (R_2J)$, and thus
\[\frac{R_1D_1}{R_1F_1} = \frac{R_2J}{R_2K}.\]
Since $R_1D_1 \pa R_2J$ and $R_1F_1 \pa R_2K$, we have that $R_1D_1F_1 \sim R_2JK$ by the $SAS$ criterion for similarity. This implies that $D_1F_1 \pa JK$ and therefore that triangles $R_1D_1F_1$ and $R_2JK$ are perspective from the point $D_1J \cd F_1K$ on the line $R_1R_2$, by the converse of Desargues' theorem. From this we see that the axis of the projectivity taking $F_1JF_2$ to $D_1KD_2$ is the line $R_1R_2$, since $JD_2 \cd F_2K = R_2$. Note that $R_2 \neq D_1J \cd F_1K$, since otherwise $D_1=D_2, F_1=F_2$, hence $R_1=R_2$, contrary to the hypothesis that $R_1$ is not a fixed point of $\gamma_P$.  Therefore, the cross-join $F_1D_2 \cd F_2D_1$ lies on the line $R_1R_2$.\smallskip

{\it Case 2.} One of the points $G, H, J, K$ is infinite. If $G$ is infinite, for example, then $QD \pa AB$, and using the map $h_b$ it follows that $QF \pa BC$, so that all four points are infinite. In this case we have $AB \cd R_1D_1 = J = AB\cd R_2D_2, BC \cd R_1F_1 = K = BC\cd R_2F_2$. Considering the projectivity taking $F_1JF_2$ to $D_1KD_2$, it is easy to see that the axis is the line $R_1R_2$, since $D_1J \cd F_1K = R_1$ and $JD_2\cd F_2K = R_2$. Hence, the cross-join $F_1D_2 \cd F_2D_1$ lies on $R_1R_2$. \smallskip

By similar reasoning applied to the other vertices, we have that $D_1E_2 \cd D_2E_1$ and $E_1F_2 \cd E_2F_1$ also lie on the line $R_1R_2$, where $R_1E_1$ and $R_2E_2$ are parallel to $QE$ with $E_1$ and $E_2$ on $AC$. But then the converse of Pascal's theorem shows that the vertices of the hexagon $D_1E_2F_1D_2E_1F_2$ lie on a conic. This proves the theorem.
\end{proof}

\begin{cor}
The projectivity $\pi: F_1JF_2 \mapsto D_1KD_2$ takes the point $G = AB \cd R_1D_1$ to the point $H = BC\cd R_1F_1$. Thus $(F_1F_2, GJ) = (D_1D_2, HK)$.
\end{cor}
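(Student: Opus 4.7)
The plan is to use the axis of the projectivity $\pi: AB \to BC$, which was identified in the proof of Theorem~\ref{thm:PedalTriangles} as the line $R_1R_2$. Recall the standard fact that for a projectivity between two distinct lines there is an axis characterized by the property that, for any two corresponding pairs $(X,\pi(X))$ and $(Y,\pi(Y))$, the cross-join $X\pi(Y) \cdot Y\pi(X)$ lies on it. I will use the known pair $F_1 \mapsto D_1$ as a ``pivot'' to pin down $\pi(G)$.

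Concretely, set $H' := \pi(G)$. The axis property applied to the pairs $(G,H')$ and $(F_1,D_1)$ forces the cross-join $GD_1 \cdot F_1H'$ to lie on $R_1R_2$. Now by the very definitions of $G$ and $D_1$, the points $G, R_1, D_1$ are collinear on the line through $R_1$ parallel to $QD$, so $GD_1$ passes through $R_1$ and therefore meets the axis $R_1R_2$ precisely at $R_1$. Consequently $F_1H'$ also passes through $R_1$. But $H' \in BC$, and the intersection of the line $R_1F_1$ with $BC$ is $H$ by definition; hence $H' = H$. The cross-ratio identity $(F_1F_2, GJ) = (D_1D_2, HK)$ is then immediate from the invariance of cross-ratios under $\pi$, applied to the four corresponding pairs $F_1 \mapsto D_1$, $F_2 \mapsto D_2$, $G \mapsto H$, $J \mapsto K$.

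The only mild obstacle is ruling out degenerate incidences—for instance, if the line $GD_1$ happened to coincide with the axis $R_1R_2$, the intersection point above would be ambiguous, and one also wants $R_1 \notin AB \cup BC$ so that the pivot argument gives a unique $H'$. Under the standing hypothesis that $R_1, R_2$ are ordinary and distinct from $Q, Q_a, Q_b, Q_c$, these configurations are non-generic and can be disposed of directly or by a short limiting argument.
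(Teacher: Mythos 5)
Your proof is correct and rests on essentially the same mechanism as the paper's: both use the fact, established in the proof of Theorem~\ref{thm:PedalTriangles}, that the cross-axis of $\pi$ is the line $R_1R_2$, together with the cross-join property of corresponding pairs. The only difference is the choice of pivot pair --- the paper obtains the incidence $D_2G \cdot F_2H \in R_1R_2$ by swapping the roles of $R_1$ and $R_2$ and then applies the axis property to the pair $(F_2,D_2)$, whereas you apply it to $(F_1,D_1)$ and exploit the definitional collinearities of $G,R_1,D_1$ and of $F_1,R_1,H$, which is slightly more self-contained (and, like the paper, leaves the non-generic incidences, e.g.\ $GD_1=R_1R_2$, to a side remark).
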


\begin{proof}
The proof of the theorem shows that the point $D_1J \cd F_1K$ lies on the line $R_1R_2$. Reversing the roles of $R_1$ and $R_2$ gives that $D_2G \cd F_2H$ lies on $R_1R_2$. But this line is the axis of $\pi$, and this implies that $G$ on $AB$ must map to $H$ on $BC$.
\end{proof}

\begin{cor}
\label{cor:Simsonconv}
If the points $D_1, E_1, F_1$ for the ordinary point $R_1$ are collinear, then the point $R_2 = \gamma_P(R_1)$ is an infinite point; i.e., the lines $Ah_a(R_1)$, $Bh_b(R_1)$, and $Ch_c(R_1)$ are parallel.
\end{cor}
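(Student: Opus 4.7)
The plan is to argue by contradiction via Theorem~\ref{thm:PedalTriangles}. Suppose $D_1, E_1, F_1$ are collinear on a line $\ell$ while $R_2 := \gamma_P(R_1)$ is an ordinary point; I aim to derive a contradiction.

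First I check that the hypotheses of Theorem~\ref{thm:PedalTriangles} apply, which amounts to verifying that $R_1 \ne R_2$, i.e., that $R_1$ is not a fixed point of $\gamma_P$. The fixed points are $Q, Q_a, Q_b, Q_c$, and one sees that each has a non-degenerate pedal triangle with respect to $P$: the line through $Q$ parallel to $QD$ is $QD$ itself, which meets $BC$ at $D$, so the pedal triangle of $Q$ is exactly $DEF$; and the parallelism $Q_bQ_c \pa EF$, etc., from I, Theorem~3.9 gives the analogous conclusion for the anticevian vertices. Hence the collinearity of the pedals of $R_1$ rules out $R_1 \in \{Q, Q_a, Q_b, Q_c\}$, so $R_1 \ne R_2$.

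Applying Theorem~\ref{thm:PedalTriangles}, the six pedal points lie on a common conic $\mathcal{C}$, and since three of them are collinear on $\ell$, this conic must factor as $\mathcal{C} = \ell \cup \ell'$ for some second line $\ell'$. I then analyze where $D_2, E_2, F_2$ can go. After handling the sub-case in which $\ell$ coincides with a side of $ABC$ (which, by direct pedal computation, returns $R_1$ to one of the excluded fixed points), we may assume $\ell \cap BC = \{D_1\}$ and similarly for the other sides. Then $D_2 \in \ell$ forces $D_2 = D_1$, and likewise for $E_2, F_2$. Two such coincidences would force the line $R_1R_2$ to be parallel to two of $QD, QE, QF$ simultaneously, placing $Q$ on a side of the cevian triangle $DEF$, contrary to our general-position hypothesis on $P$. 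Thus at most one coincidence can occur, and at least two of $D_2, E_2, F_2$ lie on $\ell'$.

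The main obstacle is closing out the two remaining cases. In the case where all three of $D_2, E_2, F_2$ lie on $\ell'$, the pedal triangle of $R_2$ is itself degenerate; using that $\gamma_P$ is an involution (since $\gamma_P = \delta_P \circ \iota \circ \delta_P$ by Proposition~\ref{prop:gpdp}(a), and both $\iota$ and $\delta_P$ are involutions), we swap the roles of $R_1$ and $R_2$ and reduce to the other case. In that case, exactly one coincidence holds, say $F_2 = F_1$, so the line $R_1R_2$ must be the line through $F_1$ parallel to $QF$. Combining this with the constraint from the proof of Theorem~\ref{thm:PedalTriangles} that $R_1R_2$ also contains the Pascal cross-join $D_1 E_2 \cd D_2 E_1$, together with the affine pedal relations $R_iD_i \pa QD$ and $R_iE_i \pa QE$ on lines $BC$ and $CA$, a short computation forces either $R_1 = R_2$ or $R_2$ infinite, each contradicting our standing assumptions. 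Hence $R_2$ cannot be ordinary, so the three lines $Ah_a(R_1), Bh_b(R_1), Ch_c(R_1)$ are parallel, as required.
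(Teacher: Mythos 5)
Your overall strategy is the same as the paper's: assume $R_2=\gamma_P(R_1)$ is ordinary, check that $R_1$ is not one of the fixed points $Q,Q_a,Q_b,Q_c$ so that Theorem \ref{thm:PedalTriangles} applies, and then derive a contradiction from the fact that the three distinct points $D_1,E_1,F_1$ (distinct because the directions $QD,QE,QF$ are distinct) lie both on a line and on the pedal conic. The paper stops there: a line meets the conic in at most two points, so three distinct collinear points on it are impossible. You instead notice that this quick conclusion silently excludes the possibility that the conic degenerates into a pair of lines $\ell\cup\ell'$ with $\ell=D_1E_1F_1$ as a component, and you try to rule that out by a case analysis. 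That is a legitimate concern, but your treatment of it does not close.

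The concrete gap is in your two final cases. In the case where all three of $D_2,E_2,F_2$ lie on $\ell'$, you propose to ``swap the roles of $R_1$ and $R_2$ and reduce to the other case.'' That reduction is circular: this configuration is symmetric in $R_1$ and $R_2$ (both pedal triangles are degenerate, the pedal conic of the swapped pair is the same pair of lines), so exchanging the two points reproduces exactly the case you are trying to eliminate, not the ``exactly one coincidence'' case. In the remaining case you appeal to ``a short computation'' that forces $R_1=R_2$ or $R_2$ infinite, but that computation is never exhibited, and it is the entire content of the step; likewise, the claim that $\ell$ being a side of $ABC$ forces $R_1$ back to one of the fixed points is asserted without justification (if $\ell=BC$ then $E_1=C$ and $F_1=B$, which pins $R_1$ to the intersection of the line through $C$ parallel to $QE$ with the line through $B$ parallel to $QF$ --- not visibly one of $Q,Q_a,Q_b,Q_c$). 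So the parts of your argument that go beyond the paper's one-line finish are precisely the parts that are either invalid or missing. A cleaner way to dispose of the degeneracy worry is to show the pedal conic cannot contain a full line at all --- for instance via the similarity $R_1D_1F_1\sim R_2JK$ and the nondegenerate perspectivity set up in the proof of Theorem \ref{thm:PedalTriangles}, or by the involution it induces on $l_\infty$ --- rather than by the case-splitting you attempt.
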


\begin{proof}
If both points $R_1$ and $R_2$ are ordinary, then the line $D_1E_1$ would intersect the pedal conic in three points, impossible unless two of the points $D_1, E_1, F_1$ are equal. But the latter is also impossible, since these points lie on three distinct lines through $R_1$. Note that the directions $QD, QE, QF$ are all distinct, because they are conjugate to the directions of the sides $BC, AC, AB$ with respect to the inconic.
\end{proof}

\noindent {\bf Remark.} Theorem \ref{thm:PedalTriangles} generalizes the classical result that the pedal triangles of isogonal conjugates are inscribed in a common circle.  See Proposition \ref{prop:R1R2conic} below.  \medskip

In the following two results (Prop. \ref{prop:Q^2} and Cor. \ref{cor:Q2}), we make very limited use of barycentric coordinates to identify the map $\gamma_P$ with a construction of Dean and Lamoen \cite{dvl}.  We will give a synthetic proof of this connection in another paper.

\begin{prop}
\label{prop:Q^2}
The mapping $\gamma_P$ is a reciprocal conjugation, in the sense of Dean and van Lamoen \cite{dvl}, with pole $P_0 = Q^2$, the point whose barycentric coordinates, with respect to $ABC$, are the squares of the barycentric coordinates of $Q$.
\end{prop}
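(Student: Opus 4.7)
The plan is to compute $\gamma_P$ explicitly in barycentric coordinates relative to $ABC$, which is the one place where coordinates enter the paper. With $P = (p_1:p_2:p_3)$, one has $Q = K\circ\iota(P)$ with coordinates
\[
(q_1:q_2:q_3) \ := \ \bigl(p_1(p_2+p_3)\,:\,p_2(p_1+p_3)\,:\,p_3(p_1+p_2)\bigr).
\]
Because $h_a$ fixes $A$, it induces an involution $\sigma_a$ on the pencil of lines through $A$, which I would parametrize by the traces of those lines on $BC$. Since $\gamma_P(R)$ lies on $Ah_a(R)$, the task reduces to pinning down $\sigma_a$ on $BC$ (and analogously $\sigma_b$ on $CA$, $\sigma_c$ on $AB$); then $\gamma_P(R)$ is the common intersection of the three resulting cevians.

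First I would identify the two fixed lines of $\sigma_a$ through $A$: the axis $AQ$, with trace $(0:q_2:q_3)$ on $BC$; and the line $\ell_a$ parallel to $EF$. Using the relation $\ell_a = T_{P'}^{-1}(BC) = Q_bQ_c$ noted in the paragraph after Theorem \ref{thm:gammaP}, I can read off the trace of $\ell_a$ directly from the known coordinates $Q_b = (q_1:-q_2:q_3)$ and $Q_c = (q_1:q_2:-q_3)$: setting $x = 0$ in the equation of line $Q_bQ_c$ yields the point $(0:q_2:-q_3)$, the harmonic conjugate of $(0:q_2:q_3)$ with respect to $B$ and $C$. The unique involution of the projective line $BC$ fixing these two points is
\[
(0:y:z) \ \longmapsto \ (0\,:\,q_2^2\,z\,:\,q_3^2\,y).
\]

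Applying this formula and its analogues on $CA$ and $AB$ to $R = (r_1:r_2:r_3)$, the line $Ah_a(R)$ has trace $(0:q_2^2 r_3:q_3^2 r_2)$ on $BC$, and a routine check shows that the common intersection of the three cevians is
\[
\gamma_P(R) \ = \ \left(\frac{q_1^2}{r_1}\,:\,\frac{q_2^2}{r_2}\,:\,\frac{q_3^2}{r_3}\right),
\]
which is precisely the Dean--van Lamoen reciprocal conjugation with pole $(q_1^2:q_2^2:q_3^2) = Q^2$. The only non-routine step is the identification of the trace of $\ell_a$, and that work has essentially already been done for us by the identification $\ell_a = Q_bQ_c$; everything else is manipulation on a projective line. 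A reassuring sanity check is that the four fixed points $(\pm q_1:\pm q_2:\pm q_3)$ of this reciprocal conjugation coincide with the four fixed points $\{Q, Q_a, Q_b, Q_c\}$ of $\gamma_P$ noted earlier in the section.
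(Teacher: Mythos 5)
Your proof is correct, but it takes a genuinely different route from the paper's. You compute $\gamma_P$ outright in barycentric coordinates: using the fact (stated just before the definition of $\gamma_P$) that $AX$ and $Ah_a(X)$ are harmonic conjugates with respect to $AQ$ and $l_a=Q_bQ_c$, you read off the traces of these two fixed lines on $BC$ from the standard coordinates of the anticevian triangle of $Q$, identify the induced involution on $BC$ as $(0:y:z)\mapsto(0:q_2^2z:q_3^2y)$, and the reciprocal-conjugation formula with pole $Q^2$ drops out; your computations check. The paper stays synthetic one step longer: it extracts from the similarity $R_1E_1F_1\sim R_2JK$ (inherited from the proof of Theorem \ref{thm:PedalTriangles}) a point $A''$ on $A\gamma_P(R_1)$ making $R_1E_1A''F_1$ a parallelogram, which shows that $\gamma_P$ is realized by the explicit parallelogram construction of Proposition 1 of \cite{dvl}; only then does it invoke their coordinate formula $\gamma_P(r,s,t)=(l/r,m/s,n/t)$ and pin down $(l,m,n)=Q^2$ from the fixed point $Q$. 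Your version is more self-contained on the coordinate side (you need only the \emph{definition} of reciprocal conjugation, not the Dean--van Lamoen construction), at the cost of leaning harder on coordinates in a paper that otherwise advertises synthetic arguments and uses them here only in a ``very limited'' way. One caution if your proof were substituted for the paper's: the point $A''$ and the parallelogram $R_1E_1A''F_1$ produced as a byproduct of the paper's argument are cited again in the proof of Proposition \ref{prop:Pedalinv}, so that geometric fact would then have to be established separately.
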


\begin{figure}
\[\includegraphics[width=4.5in]{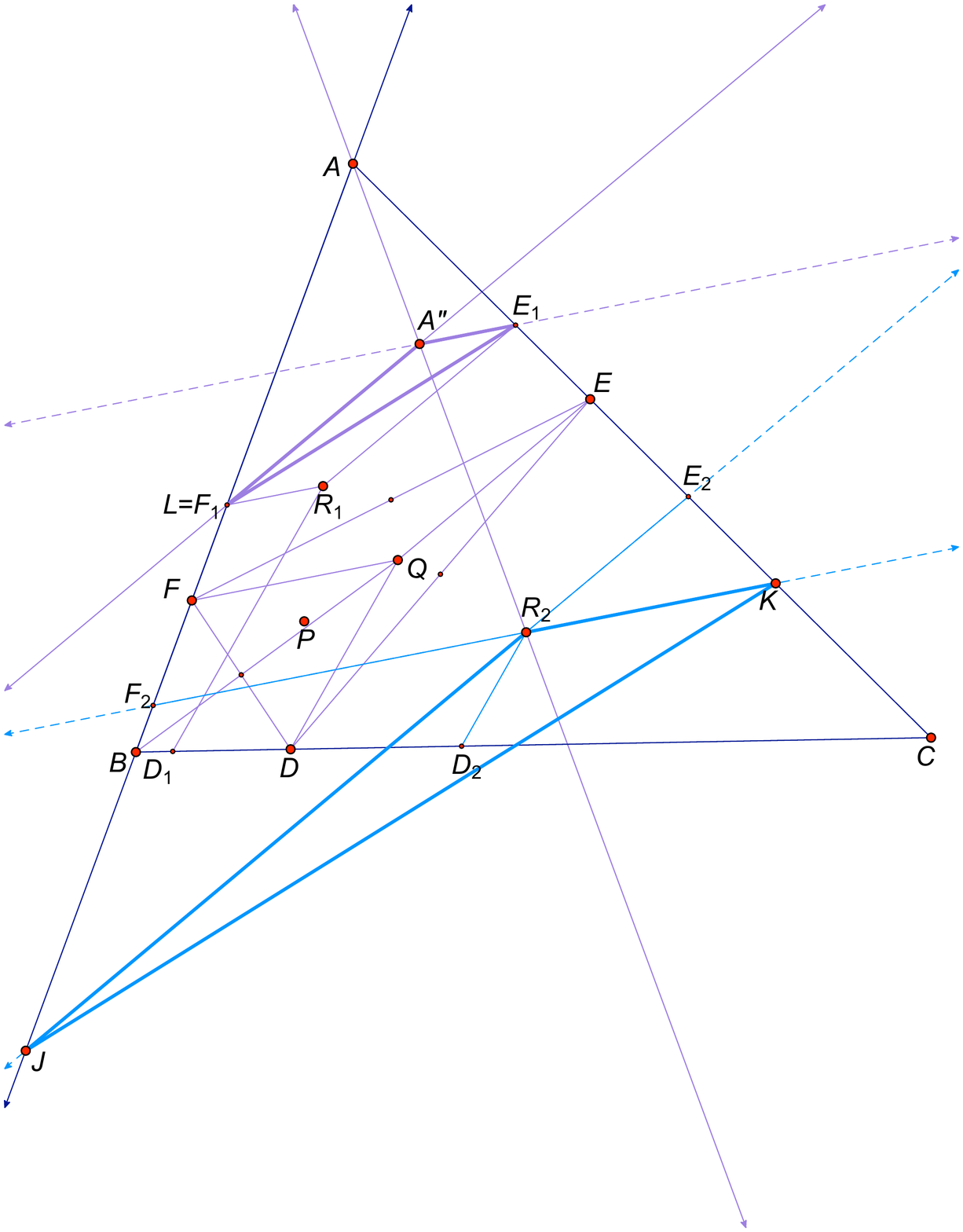}\]
\caption{Proof of Proposition \ref{prop:Q^2}}
\label{fig:2.3}
\end{figure}

\begin{proof}
(See Figure \ref{fig:2.3}.) We use the fact that $R_1D_1F_1 \sim R_2JK$ from the proof of Theorem \ref{thm:PedalTriangles}. Switching to the vertex $A$, this similarity becomes $R_1E_1F_1 \sim R_2JK$, where
\[J = AB \cd R_2E_2, K = AC \cd R_2F_2.\]
In particular, we have that $E_1F_1 \pa JK$ as in the proof of Theorem \ref{thm:PedalTriangles}. Let the line $A''E_1$ parallel to $QF$ intersect the line $AR_2$ in the point $A''$, and let the line $A''L$ parallel to $QE$ intersect $AB$ in the point $L$. Then $A''E_1 \pa R_2K$ and $A''L \pa R_2J$. We also have
\[\frac{A''E_1}{A''L} = \frac{R_2K}{R_2J}\]
from the similarities $AA''E_1 \sim AR_2K$ and $AA''L \sim AR_2J$. It follows that $A''LE_1 \sim R_2JK$, so that $LE_1 \pa JK$ and hence $LE_1 \pa E_1F_1$. This implies that $LE_1 = E_1F_1$ and $L = F_1$ since $L$ lies on $AB$. Thus, $R_1E_1A''F_1$ is a parallelogram, with $A''$ on the line $AR_2 = A\gamma_P(R_1)$. This shows that $R_2 = \gamma_P(R_1)$ can be found using the construction in \cite{dvl}, Prop. 1. By the results of \cite{dvl}, the mapping $\gamma_P$ is given in barycentric coordinates by
\begin{equation}
\label{eqn:5.2}
\gamma_P(r, s, t) = \left(\frac{l}{r}, \frac{m}{s}, \frac{n}{t}\right),
\end{equation}
where $lmn \ne 0$. Since $Q=(r,s,t)$ is a fixed point of $\gamma_P$ we can take $(l, m, n) = (r^2,s^2,t^2)=Q^2$. Thus, $Q^2$ is the pole of the reciprocal conjugation $\gamma_P$.
\end{proof}

\begin{cor}
\label{cor:Q2}
The point $Q^2=\gamma_P(G)$ is the isotomcomplement of the generalized orthocenter $H$ of the point $P$ with respect to $ABC$.
\end{cor}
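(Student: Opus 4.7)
The plan is to invoke Theorem \ref{thm:gammadelta}, which asserts $\gamma_P = \delta_H$, and then show that $\delta_H$ sends the centroid to the isotomcomplement of $H$. Combined with Proposition \ref{prop:Q^2} (which identifies $\gamma_P(G)$ with $Q^2$), this will yield $Q^2 = \gamma_P(G) = \delta_H(G) = K \circ \iota(H)$, as desired.

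To show $\delta_H(G) = K \circ \iota(H)$, I apply Proposition \ref{prop:gpdp}(a) with the point $H$ in place of $P$ to obtain $\gamma_H = \delta_H \circ \iota \circ \delta_H$. Since $\iota$ fixes $G$ and $\delta_H$ is an involution, the short calculation
\[
\gamma_H(\delta_H(G)) = \delta_H\bigl(\iota\bigl(\delta_H(\delta_H(G))\bigr)\bigr) = \delta_H(\iota(G)) = \delta_H(G)
\]
shows that $\delta_H(G)$ lies in the fixed set of $\gamma_H$. The paragraph immediately following Theorem \ref{thm:gammaP}, applied with $H$ in place of $P$, identifies that fixed set as the isotomcomplement $Q_H := K \circ \iota(H)$ of $H$, together with the three vertices of the anticevian triangle of $Q_H$ with respect to $ABC$; hence $\delta_H(G)$ equals one of these four points.

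The final step is to single out $Q_H$ among these four candidates. The three anticevian vertices lie on the sides of the anticevian triangle of $Q_H$, which pass through the vertices of $ABC$, whereas $G$ is interior; using that $\delta_H = \gamma_P$ is a reciprocal conjugation (Proposition \ref{prop:Q^2}), one excludes the three anticevian vertices as images of $G$. Alternatively, specialization to $P = G$ gives $H = G = Q_H$ and $\delta_G(G) = G$ trivially, and since both sides of the proposed identity are rational in the coordinates of $P$, agreement on any open set forces agreement throughout the admissible locus. This final identification --- excluding the three anticevian vertices --- is the main obstacle; once in hand, $\gamma_P(G) = \delta_H(G) = Q_H = K \circ \iota(H)$, proving the corollary.
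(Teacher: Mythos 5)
Your reduction is correct as far as it goes: $\gamma_H=\delta_H\circ\iota\circ\delta_H$ is a legitimate instance of Proposition \ref{prop:gpdp}(a) applied to $H$ (the proof of Theorem \ref{thm:gammadelta} shows $H$ avoids the sides of $ABC$ and $K^{-1}(ABC)$ when it is not a vertex), and the computation showing $\delta_H(G)$ is a fixed point of $\gamma_H$ is fine. But the step you yourself flag as ``the main obstacle'' --- ruling out the three anticevian vertices of $Q_H$ --- is not actually carried out, and neither of your two suggestions closes it. The first (``$G$ is interior'' plus an appeal to Proposition \ref{prop:Q^2}) is not an argument: $\delta_H$ has no interior-preserving property you can cite, and if you invoke Proposition \ref{prop:Q^2} you already know $\gamma_P(G)=Q^2$ is the pole, so you are back to needing the identification of that pole with $K\circ\iota(H)$, which is the entire content of the corollary. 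The second (check $P=G$, then extend ``by rationality'') verifies agreement only at a single point; to apply the rational-identity principle you need agreement on an open set, which requires an open-closed argument on the admissible locus (the four fixed points of $\gamma_H$ remain distinct, so the locus where $\delta_H(G)=Q_H$ is both open and closed, and one must then worry that the admissible locus is the complement of several lines and conics and need not be connected), plus a justification that $\delta_H(G)$ is a rational function of $P$ --- none of which you supply.

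For comparison, the paper's proof of this corollary is a two-line coordinate argument entirely different from yours: the parallels $QD,QE,QF$ are the cevian directions of $H=(f,g,h)$, and Dean--van Lamoen give the pole of the resulting reciprocal conjugation as $(f(g+h),g(f+h),h(f+g))$, visibly the isotomcomplement of $H$. The paper's separate synthetic proof does follow your route through Theorem \ref{thm:gammadelta}, but it closes instantly by citing I, Corollary 3.6 with $P=H$, which states outright that $\delta_H(G)=K\circ\iota(H)$; that citation is precisely the missing ingredient that would dissolve your obstacle without any fixed-point analysis.
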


\begin{proof}
In the construction of the proposition, the lines $QD, QE, QF$ are parallel to the cevians of the generalized orthocenter $H = (f, g, h)$ for $P$. Dean and van Lamoen state that in this case the pole of the corresponding reciprocal conjugation is the point $P_0 = (f(g+h), g(f+h), h(f+g))$. But these are just the coordinates of the isotomcomplement of $H$. This proves the corollary.
\end{proof}

We will now give a synthetic proof of the relationship of the last corollary, in the form $\gamma_P(G)=K \circ \iota(H)$.  This proof uses the relationship $\gamma_P(O)=H$, which we will prove in Proposition \ref{prop:gammaH} below.  (Also see Theorem \ref{thm:persG}.)

\begin{thm}
If the point $H$ is not a vertex of $ABC$ and $P$ does not lie on $\iota(l_\infty)$, then $\gamma_P=\delta_H$.
\label{thm:gammadelta}
\end{thm}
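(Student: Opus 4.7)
We want to prove $\gamma_P = \delta_H$, where $\delta_H$ is the involution produced by I, Corollary 3.6 applied to the triangle--point pair $(ABC,H)$, in the same way $\delta_P$ is produced from $(ABC,P)$. By Proposition \ref{prop:gpdp}(a), $\gamma_P = \delta_P\circ\iota\circ\delta_P$ is an involution; and $\delta_H$ is an involution by construction. So it suffices to show these two involutions coincide, and the most direct route is to verify that they share the same cevian--trace midpoint characterization of the kind used inside the proof of Proposition \ref{prop:gpdp}(a).

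I would first unpack the definitions. For non-vertex points $R_1,R_2$, let $D_1,D_2$ be their traces on $BC$ and let $D_1',D_2'$ be the traces on $BC$ of $\delta_P(R_1),\delta_P(R_2)$; the proof of Proposition \ref{prop:gpdp}(a) rewrites $\gamma_P(R_1)=R_2$ as the condition that $D_0$ is the midpoint of $D_1'D_2'$, together with the analogous statements on $CA$ and $AB$. Applying I, Corollary 3.6 verbatim to $(ABC,H)$ yields the analogous midpoint characterization for $\delta_H(R_1)=R_2$, this time expressed via the cevian structure of $H$ in place of that of $P$.

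The heart of the proof — and the main obstacle — is to match these two midpoint conditions. The bridge is the defining property of $H$: its cevians through $A,B,C$ are parallel to $QD,QE,QF$, so the trace of $H$ on $BC$ is the intersection of $BC$ with the line through $A$ parallel to $QD$. By I, Theorem 2.4, $AQ$ is the diameter of the inconic $\mathcal I$ conjugate to the direction of $BC$, so this trace is related to $D$ by an affine reflection on $BC$ that fixes $AQ\cdot BC$, and this is the same structure that governs $h_a$ and hence the cevian-midpoint description of $\delta_P$. Translating through this conjugate-diameter relation on each of the three sides converts the $H$-cevian midpoint condition defining $\delta_H(R_1)=R_2$ into exactly the $\delta_P$-trace midpoint condition that, by Proposition \ref{prop:gpdp}(a), characterises $\gamma_P(R_1)=R_2$.

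Once the two conditions coincide, we conclude $\gamma_P=\delta_H$ on the common domain, using the standing hypotheses of the theorem: $H$ not a vertex (so $\delta_H$ is defined) and $P\notin\iota(l_\infty)$ (so $Q$, and hence the whole construction, is ordinary). The bookkeeping in the second step — lining up traces, midpoints, and the affine reflections $h_a,h_b,h_c$ simultaneously on all three sides — is the part that requires care; once done, no further input is needed.
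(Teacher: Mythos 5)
There is a genuine gap at the heart of your argument. The paper's proof hinges on one nontrivial external input, namely $\gamma_P(O)=H$ (Proposition \ref{prop:gammaH}, proved via the nine-point conic $\Nh$ and Proposition \ref{prop:R1R2conic}); combined with $\delta_H(H)=K(H)=O$ from I, Corollary 3.6, this shows that the involution $\pi_A: D^* \mapsto AT_H(D^*)\cdot BC$ defining $\delta_H$ on $BC$ and the involution $\pi'_A: D^* \mapsto Ah_a(D^*)\cdot BC$ defining $\gamma_P$ on $BC$ share the two distinct pairs $(B,C)$ and $(D_H,D_O)$, hence coincide. Your proposal never invokes $\gamma_P(O)=H$ or any substitute for it, and the step that is supposed to replace it --- ``translating through the conjugate-diameter relation converts the $H$-cevian midpoint condition into exactly the $\delta_P$-trace midpoint condition'' --- is asserted, not argued. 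That translation is precisely the content of the theorem, and it is not a routine bookkeeping exercise: for instance, $h_a(AD)=AS$ (with $S=\gamma_P(P)$, by Proposition \ref{prop:PS}), not $AD_H$, so the reflections $h_a,h_b,h_c$ do not interact with the cevians of $H$ in the direct way your bridge suggests.

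There is also a conceptual misstep in the setup. Applying the midpoint characterization from the proof of Proposition \ref{prop:gpdp}(a) ``verbatim to $(ABC,H)$'' yields the identity $\gamma_H=\delta_H\circ\iota\circ\delta_H$, i.e.\ a description of the generalized isogonal map \emph{attached to the point $H$} in terms of $\delta_H$ --- it does not give a midpoint characterization of $\delta_H$ itself, and $\gamma_H$ is not the object you need to compare with $\gamma_P$. Finally, you should also supply the preliminary step the paper handles first: that $H$ not a vertex forces $H$ off the sides of $ABC$ and of $K^{-1}(ABC)$, which is needed both for $\delta_H$ to be defined and for $\delta_H(H)=K(H)$ to hold, and which guarantees that $(B,C)$ and $(D_H,D_O)$ really are two distinct pairs.
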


\begin{proof}
First note that if $H$ is on one of the sides of $ABC$, then it has to be a vertex.  If $H$ is on $BC$, say, but is not $B$ or $C$, then $BA = h_b(BC) =
h_b(BH) = BO$, and similarly $CA = h_c(CB) = h_c(CH) = CO$, so $O = A$ and $H =K^{-1}(A)$, which is not on $BC$; contradiction.  Also, $H$ cannot lie on a side of the anticomplementary triangle $K^{-1}(ABC)$.  If it did, then $K(H)=O$ would lie on one of the sides of $ABC$, and since the ordinary point $O$ is the center of the circumconic $\tilde \cC_O$, this would force $O$ to be a midpoint of a side, implying again that $H=K^{-1}(O)$ is a vertex.  \smallskip

From I, Corollary 3.6 we have that $\delta_P(P)=Q'=K(P)$, for points $P$ that are not on the sides of $ABC$ or $K^{-1}(ABC)$.  Hence, $\delta_H(H)=K(H)=O$. We will also use the fact that $\gamma_P(H)=O$ from Proposition \ref{prop:gammaH}.  This shows that the maps $\gamma_P$ and $\delta_H$ share the point pair $(H, O)$.\smallskip

Now consider the involution $\pi_A$ on the line $BC$ given by $\pi_A(D^*) = AT_H(D^*) \cdot BC$, where $T_H$ maps $ABC$ to the cevian triangle of $H$.  This is one of three involutions which are used to define the map $\delta_H$.  (See the discussions following Lemma 3.4 and Theorem 3.5 in Part I.)  It is clear that $\pi_A$ switches $B$ and $C$, and also the pair $D_H = AH \cdot BC$ and $D_O = AO \cdot BC$, since $\delta_H(H)=O$.  Since $H$ is not on $AB$ or $AC$, these are two different pairs of points, so the involution $\pi_A$ is uniquely determined.   \smallskip

We claim that this is the same involution which is induced by $\gamma_P$ on the line $BC$.  This follows from the fact that the involution
$\pi'_A$ given by $\pi'_A(D^*) = Ah_a(D^*) \cdot BC$, where $h_a$ is affine reflection in $AQ$ in the direction of $EF$, also switches the pairs $(B, C)$ and $(D_H, D_O)$, since $\gamma_P(H)=O$.  Hence the two involutions must be the same: $\pi_A = \pi'_A$.  Similar statements are true for the other sides: $\pi_B = \pi'_B$ and $\pi_C = \pi'_C$. But $\delta_H(X)$ is the intersection of $A\pi_A(AX \cdot BC)$, $B\pi_B(BX \cdot AC)$, and $C\pi_C(CX \cdot AB)$.  Since the same is true for the map $\gamma_P(X)$, with $\pi$ replaced by $\pi'=\pi$, we have $\gamma_P(X) = \delta_H(X)$.
\end{proof}

\begin{cor}
If $P$ does not lie on $\iota(l_\infty)$ or any of the conics $\overline{\mathcal{C}}_A, \overline{\mathcal{C}}_B, \overline{\mathcal{C}}_C$ of \cite{mmv} (so that $H \neq A,B,C$), then $\gamma_P(G) = K \circ \iota(H)$ is the isotomcomplement of the generalized orthocenter $H$.
\end{cor}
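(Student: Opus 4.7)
The plan is to combine Theorem \ref{thm:gammadelta} with the general identity $\delta_X(G) = K\circ\iota(X)$, applied to $X=H$. Under the stated hypotheses on $P$ we have $H\neq A,B,C$, so Theorem \ref{thm:gammadelta} yields $\gamma_P = \delta_H$ and hence $\gamma_P(G)=\delta_H(G)$. It therefore remains to identify $\delta_H(G)$ with the isotomcomplement $K\circ\iota(H)$, which is the ``$Q$-point'' associated to $H$.

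To evaluate $\delta_H(G)$, I would unpack the side-involution description of $\delta_H$ used in the proof of Theorem \ref{thm:gammadelta}. The cevian $AG$ meets $BC$ at the midpoint $D_0$, so the $BC$-trace of $\delta_H(G)$ from the vertex $A$ is $\pi_A(D_0)=AT_H(D_0)\cdot BC$, where $T_H$ is the affine map sending $ABC$ onto the cevian triangle $D_HE_HF_H$ of $H$. Since $T_H$ is affine and $D_0$ is the midpoint of $BC$, it sends $D_0$ to the midpoint of $E_HF_H$. Thus $\delta_H(G)$ lies on the line through $A$ and $\mathrm{mid}(E_HF_H)$, and analogously on the corresponding ``vertex-to-midpoint'' cevians from $B$ and $C$ of the triangle $D_HE_HF_H$.

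The main obstacle, and final step, is the identification of the common point of these three cevians as $K\circ\iota(H)$. A short check in barycentrics makes this transparent: if $H=(f:g:h)$, then $E_H=(f:0:h)$ and $F_H=(f:g:0)$, and the line $A\cdot\mathrm{mid}(E_HF_H)$ meets $BC$ at $(0:g(f+h):h(f+g))$, which is the $BC$-trace of $(f(g+h):g(f+h):h(f+g))=K\circ\iota(H)$; the analogous identities for the other two sides then force the concurrency point to be $K\circ\iota(H)$. A purely synthetic argument for the same assertion would note that $T_H^{-1}$ carries the $A$-median of $D_HE_HF_H$ to the median $AD_0$ of $ABC$, and then apply the interplay between $T_H$ and the complement map $K$ worked out in Parts I--III to translate this back into a cevian identity for $K\circ\iota(H)$. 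Either version gives $\delta_H(G)=K\circ\iota(H)$, and combined with the reduction above yields $\gamma_P(G)=K\circ\iota(H)$, as claimed.
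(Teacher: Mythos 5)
Your argument is correct, and its first half is exactly the paper's: both you and the authors invoke Theorem \ref{thm:gammadelta} (whose hypotheses are met because the stated conditions on $P$ give $H \neq A,B,C$) to reduce the claim to evaluating $\delta_H(G)$. The divergence is in the second half. The paper's entire proof of that step is a one-line citation: I, Corollary 3.6 applied with $P=H$ already asserts that $\delta_H(G)$ is the isotomcomplement $K\circ\iota(H)$ (the same result cited earlier in the proof of Theorem \ref{thm:gammadelta} for $\delta_H(H)=K(H)$; note that the first paragraph of that proof also supplies the needed hypothesis that $H$ avoids the sides of $ABC$ and $K^{-1}(ABC)$). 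You instead re-derive this fact from the side-involution description of $\delta_H$, correctly identifying the trace of $\delta_H(G)$ on $BC$ as the intersection of $BC$ with the line joining $A$ to the midpoint of $E_HF_H$, and then verifying in barycentrics that this is the trace of $(f(g+h):g(f+h):h(f+g))=K\circ\iota(H)$. Your computation checks out, so the proof is valid; what it costs you is redundancy (you are re-proving a quotable lemma from Part I) and, more substantively, a clash with the authors' stated purpose for this corollary, which is advertised in the introduction precisely as the \emph{synthetic} counterpart to the barycentric Corollary \ref{cor:Q2}. Your closing sketch of a synthetic version, or simply citing I, Corollary 3.6, would restore that. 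What your route buys in exchange is self-containedness: a reader sees concretely why $\delta_H(G)$ is the isotomcomplement rather than having to chase the reference.
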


\begin{proof}
We have $\gamma_P(G) = \delta_H(G) = K \circ \iota(H)$, by I, Corollary 3.6, with $P=H$.
\end{proof}

\end{section}

\begin{section}{Generalized isogonal relationships.}

\begin{prop}
\label{prop:uniqueC}
Let $\psi$ be an involution on the line at infinity, which is induced by the polarity for a central conic, and let $A,B,C$ be non-collinear points.  Then there is a unique conic through $A,B,C$ whose polarity induces $\psi$.
\end{prop}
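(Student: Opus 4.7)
The plan is to work in affine coordinates $(x,y)$ in which $l_\infty$ is the line at infinity and to write every conic in the form $Q(x,y)+L(x,y)+c=0$, where $Q$ is a quadratic form, $L$ is linear, and $c$ a constant. The involution induced on $l_\infty$ by the conic's polarity depends only on the quadratic part $Q$, up to a scalar: two points at infinity are conjugate precisely when their direction vectors are $Q$-conjugate. The hypothesis that $\psi$ comes from the polarity of a central conic therefore supplies a non-degenerate quadratic form $Q_0$, unique up to scalar, inducing $\psi$. Any conic whose polarity induces $\psi$ must have quadratic part proportional to $Q_0$, and the problem reduces to pinning down the linear term and the constant.

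For existence, I would seek a conic of the form $Q_0(x,y)+2dx+2ey+c=0$ and require it to pass through $A$, $B$, and $C$. Substituting the coordinates of the three points yields a linear system of three equations in the three unknowns $d,e,c$, whose coefficient matrix is
\[\begin{pmatrix} 2x_A & 2y_A & 1 \\ 2x_B & 2y_B & 1 \\ 2x_C & 2y_C & 1 \end{pmatrix}.\]
Its determinant is proportional to the signed area of triangle $ABC$, and so is nonzero by the non-collinearity hypothesis. The system therefore has a unique solution, producing a conic with quadratic part $Q_0$ that passes through $A$, $B$, $C$.

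For uniqueness, suppose $\mathcal{C}_1$ and $\mathcal{C}_2$ are two conics through $A,B,C$ whose polarities both induce $\psi$. After rescaling their defining equations, I may assume both have quadratic part equal to $Q_0$, so $\mathcal{C}_i : Q_0+L_i+c_i=0$. The difference of the two defining polynomials, $(L_1-L_2)(x,y)+(c_1-c_2)$, is then a polynomial of degree at most one which vanishes at the three non-collinear points $A,B,C$, and must therefore be identically zero. Hence $L_1=L_2$ and $c_1=c_2$, so $\mathcal{C}_1=\mathcal{C}_2$.

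The only real subtlety, and the part of the argument needing the most care, is checking that the unique solution produced by the existence step is genuinely non-degenerate (and not, say, a pair of ordinary lines, which would fail to have a well-defined polarity). This degeneracy occurs only in the exceptional configuration where some side of $ABC$ points in a direction fixed by $\psi$, and it does not arise in the settings where this proposition is applied — in particular, when $\psi$ is induced by an ellipse, whose fixed directions on $l_\infty$ are complex, every real solution is automatically non-degenerate.
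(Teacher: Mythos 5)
Your proof is correct, but it takes a genuinely different route from the paper's. You work in affine coordinates: the involution $\psi$ determines the quadratic part $Q_0$ of any inducing conic up to a scalar, and the three point-conditions then become a $3\times 3$ linear system for the linear and constant coefficients whose matrix is nonsingular precisely because $A,B,C$ are non-collinear, so existence and uniqueness come out together. The paper instead argues synthetically, consistent with its stated policy: if $R,S$ are the points at infinity on $AB,AC$ and $L,M$ the midpoints of those sides, then $L\psi(R)$ and $M\psi(S)$ are diameters of any conic through $A,B,C$ inducing $\psi$, so the center is forced to be their intersection; the conic is then pinned down by five points (the vertices plus two reflections through the center) or, when the center is the midpoint of a side, by the three vertices and the tangents at the endpoints of that side. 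The paper's argument is essentially only a uniqueness argument (it determines the conic assuming one exists), which is all that is actually used in Propositions \ref{prop:Pedalinv} and \ref{prop:R1R2conic}, where the conics being compared are already given; your version additionally delivers existence. Your closing caveat is a real issue that the paper's proof also does not address: if $\psi$ has real fixed directions (the asymptotic directions of the inducing conic) and a side of $ABC$ is parallel to one of them, the unique solution of your system is a degenerate line-pair --- for instance $Q_0=xy$ with $A=(0,0)$, $B=(1,0)$, $C=(0,1)$ forces $xy=0$ --- so existence of a genuine conic fails and the statement as literally written needs that configuration excluded. Since only the uniqueness half is invoked downstream, nothing in the paper is affected, but you are right to flag it, and your restriction to the elliptic case is not enough for the paper's general setting, where the relevant conics may be hyperbolas.
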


\noindent {\bf Remark.} We only need the proposition for central conics, since pedal conics are only defined when $P$ is not on the Steiner circumellipse $\iota(l_\infty)$ and $Q$ is ordinary. \smallskip

\begin{proof}
Let $\psi$ be the involution on $l_\infty$ induced by the polarity for a conic $\C$. If $L$ and $M$ are the midpoints of $AB$ and $AC$, and the points at infinity on $AB$ and $AC$ are $R$ and $S$, then the lines $L\psi(R)$ and $M\psi(S)$ are distinct diameters of $\C$, since otherwise $LM=\psi(R) \psi(S)$ would the line at infinity.  Thus, their intersection is the center of $\C$ (an ordinary point because $\psi(R) \neq \psi(S)$).  If the center is not collinear with any two of the vertices of $ABC$, the conic is determined by the vertices and two of their reflections through the center, which makes $5$ points.  If the center is collinear with two of the points, say $B$ and $C$, then it has to be the midpoint of $BC$.  If the point at infinity on $BC$ is $T$, the tangent lines at $B$ and $C$ are $B\psi(T)$ and $C\psi(T)$.  Then $\C$ is determined by the $3$ vertices and the two tangent lines.  See \cite{co2}.
\end{proof}

\begin{prop}
\label{prop:Pedalinv}
If the point $P$ does not lie on $\iota(l_\infty)$, the pedal conic $\mathcal{P}$ of an ordinary point $R_1$ and its ordinary $P$-isogonal conjugate $R_2$ induces the same involution $\psi$ on $l_\infty$ as the inconic $\I$.   The center of $\mathcal{P}$ is the midpoint $M$ of the segment $R_1R_2$.
\end{prop}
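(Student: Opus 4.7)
Plan: The plan is to prove the two assertions simultaneously, via a short parallelism lemma combined with a reduction to the classical isogonal-conjugate case.

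First I would establish the key parallelism. By the defining relations $R_1D_1\parallel R_2D_2\parallel QD$, the parallel projection in direction $QD$ onto line $BC$ sends $R_i\mapsto D_i$ for $i=1,2$; since such a projection is affine and midpoint-preserving, $M$ projects to the midpoint $M_D$ of $D_1D_2$, so $MM_D\parallel QD$. Symmetrically $MM_E\parallel QE$ and $MM_F\parallel QF$. Now, the midpoint of a chord of any conic lies on the diameter conjugate to the chord's direction, and the three diameters of $\mathcal{P}$ conjugate to $BC, CA, AB$ all concur at the center of $\mathcal{P}$ and pass through $M_D, M_E, M_F$ respectively. Consequently the two assertions of the proposition become equivalent: the center of $\mathcal{P}$ equals $M$ if and only if the directions conjugate to $BC, CA, AB$ with respect to $\mathcal{P}$ are $QD, QE, QF$, which is exactly the matching of induced involutions on $\li$. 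Indeed, if the center is $M$, then the three diameters must be the lines $MM_D, MM_E, MM_F$ and so have the stated directions; conversely, if these are the diameter directions, then all three diameters pass through $M$, forcing the center to be $M$ since $QD, QE, QF$ are distinct.

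To prove the center claim, I would apply an affine transformation $\Phi$ carrying $\I$ to a circle. Since $Q$ is ordinary by hypothesis, $\I$ is a central conic; in the generic ellipse case such a $\Phi$ exists and makes $\Phi(\I)$ the inscribed circle of $\Phi(ABC)$. Then $\Phi(P)$ is the Gergonne point of $\Phi(ABC)$, $\Phi(Q)$ its incenter, the directions $QD, QE, QF$ become the perpendiculars to the respective sides, and $\gamma_P$ corresponds to the classical isogonal conjugation. The generalized pedal triangles of $R_1, R_2$ therefore become the ordinary Euclidean pedal triangles, and $\Phi(\mathcal{P})$ becomes the classical pedal circle of $\Phi(R_1), \Phi(R_2)$; the well-known classical theorem that this pedal circle has its center at the midpoint of $\Phi(R_1)\Phi(R_2)$, combined with the midpoint-preservation of affine maps, then yields the center claim.

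The principal obstacle is the case when $\I$ is not an ellipse, for which no real affine transformation sends it to a circle. This can be bypassed either by an analytic-continuation argument (both conclusions are algebraic in the positions of $P$ and $R_1$ and hold on a nonempty open subset of configurations) or synthetically via Proposition \ref{prop:uniqueC}: letting $\mathcal{P}'$ be the unique conic through $D_1, E_1, F_1$ inducing the involution $\psi$ of $\I$, one locates its center at $M$ using the parallelism together with the pairing structure of $\psi$, and then reads off that the second intersections of $\mathcal{P}'$ with $BC, CA, AB$ must be $D_2, E_2, F_2$, so $\mathcal{P}'=\mathcal{P}$ and both conclusions follow.
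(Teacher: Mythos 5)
Your opening parallelism lemma and the resulting equivalence of the two claims (center $=M$ if and only if $\mathcal{P}$ pairs the directions of $BC,CA,AB$ with those of $QD,QE,QF$, which is exactly the involution of $\I$) are correct and nicely packaged; and your main device --- an affine map $\Phi$ sending $\I$ to a circle, so that $\Phi(Q)$ is the incenter, $\gamma_P$ conjugates to the classical isogonal map, the generalized pedal triangles become Euclidean pedal triangles, and the classical midpoint-of-the-pedal-circle theorem finishes --- is a genuinely different route from the paper's and is valid \emph{when $\I$ is an ellipse}. The paper instead argues intrinsically: it builds, for each $R_2$ on $Ah_a(R_1)$, a conic $\C_M$ centered at the midpoint $M$ through $E_1,F_1,F_2$, exhibits two conjugate pairs for it (the direction of the locus of $M$ against $E_1F_1$, and $AB$ against $QF$, using the parallelogram $R_1E_1A''F_1$ from the proof of Proposition \ref{prop:Q^2}), pins the involution down to $\psi$ by specializing $R_2=h_a(R_1)$ so that $h_a$ swaps the $QE$- and $QF$-directions, and then glues the three resulting conics into one via Proposition \ref{prop:uniqueC}. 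That argument is insensitive to whether $\I$ is an ellipse or a hyperbola.

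The gap in your version is precisely the hyperbola case, which does occur (for $P$ outside the medial configuration the inconic tangent at $D,E,F$ need not be an ellipse), and neither of your proposed repairs closes it as written. The analytic-continuation route abandons the synthetic framework and is only gestured at: you would have to exhibit both conclusions as polynomial identities in the coordinates of $A,B,C,P,R_1$ (using that the pedal conic already exists by Theorem \ref{thm:PedalTriangles}, so its center is a rational function of the data) and invoke Zariski density of the ellipse locus; none of that is carried out. The ``synthetic'' alternative is circular: the step ``one locates its center at $M$ using the parallelism together with the pairing structure of $\psi$'' is exactly the content of the proposition and does not follow from your lemma. Your parallelism only shows that \emph{if} the center of the conic through $D_1,E_1,F_1$ inducing $\psi$ were $M$, then its diameters would point along $QD,QE,QF$ (and conversely); to actually compute that center from $D_1,E_1,F_1$ and $\psi$ you would need the $\psi$-conjugates of the directions $D_1E_1$ and $D_1F_1$, which are not among the three known pairs, so no mechanism is supplied that ties the center to $R_2=\gamma_P(R_1)$ at all. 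Some input linking $R_2$ to the conic --- the role played in the paper by the reflections $h_a,h_b,h_c$ and the point $A''$ --- is indispensable and is missing here.
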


\begin{proof}
(See Figure \ref{fig:3.1}.)  Let $R_2$ be any point on the line $l=Ah_a(R_1)$ and $M$ the midpoint of segment $R_1R_2$.  Let $E_1,F_1$ and $E_2,F_2$ be the intersections with $AC$ and $AB$ of the lines through $R_1$ and $R_2$ parallel to $QE, QF$.  We show that the conic $\C_M$ on the points $E_1, F_1, F_2$ with center $M$ induces $\psi$.  If $R_1$ is fixed, then as $R_2$ varies on the line $l$, the point $M$ varies on a line $l'$ parallel to $l$.  The line $l'$ lies on the midpoint of $E_1F_1$, since $M$ is equal to this midpoint when $R_2$ is the point $A''$ in the proof of Proposition \ref{prop:Q^2}; for $R_1E_1A''F_1$ is a parallelogram with diagonals $R_1A''=R_1R_2$ and $E_1F_1$ (see Figure \ref{fig:2.3}).  Hence, the direction of the line $l'$ is conjugate to the direction of the line $E_1F_1$ for any of the conics $\C_M$, since $E_1F_1$ is a chord on each of these conics.  Furthermore, if $F_3$ is the midpoint of segment $F_1F_2$, then it is easy to see that $MF_3 \pa QF$.  Hence, the directions of $AB$ ($=F_1F_2$) and $QF$ are conjugate for any conic $\C_M$.  (If $R_1$ lies on $AQ$ and $R_2=R_1$, then if $\C_M$ is the conic with center $M=R_1$ which is tangent to $AB$ at $F_1=F_2$ and tangent to $AC$ at $E_1=E_2$, the same conclusions hold.)  Since the involutions induced on $l_\infty$ by these conics share two pairs of conjugate points, they must all be the same involution $\psi_1$. \medskip

\begin{figure}
\[\includegraphics[width=4.5in]{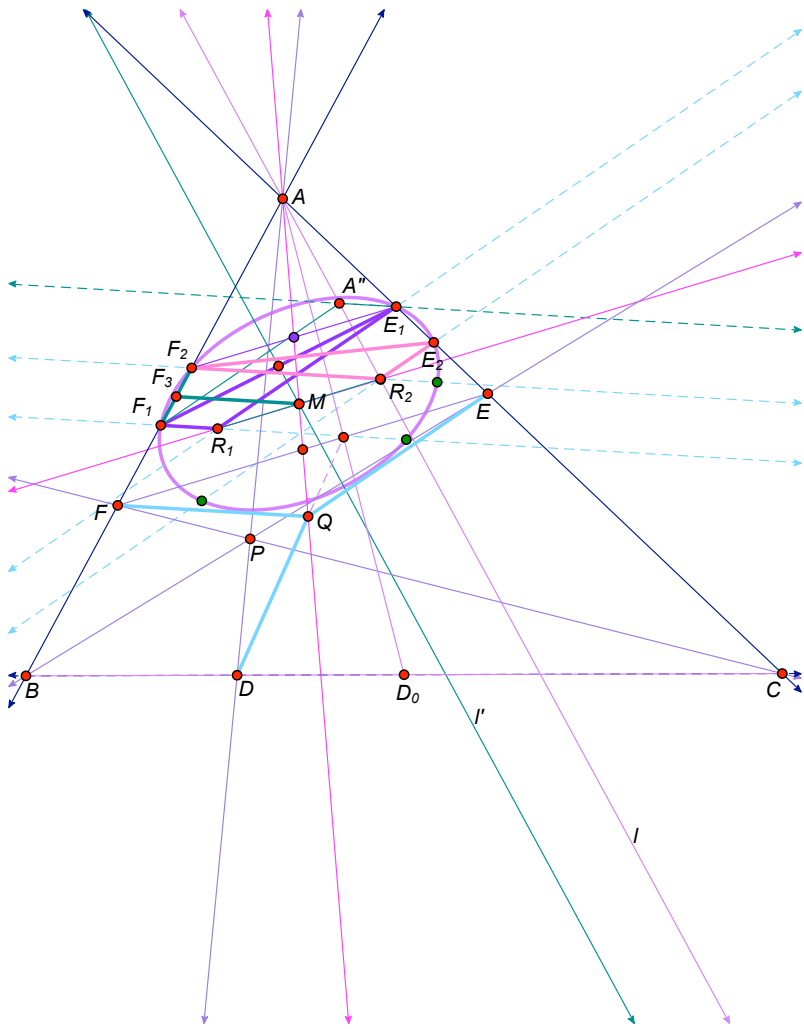}\]
\caption{Proof of Proposition \ref{prop:Pedalinv}}
\label{fig:3.1}
\end{figure}

We now show this involution coincides with $\psi$.  To do this, consider the point $R_2=h_a(R_1)$ and its corresponding midpoint $M$.  With this choice of $R_2$, $M$ lies on the line $AQ$, by definition of the affine reflection $h_a$.  Now the map $h_a$ interchanges the directions of $QE$ and $QF$, since $AQ$ lies on the midpoint of $EF$ (I, Theorem 2.4), so it maps triangle $E_1R_1F_1$ to $F_2R_2E_2$.  This implies that $F_2=h_a(E_1)$ and $E_2=h_a(F_1)$.  Thus $E_1F_2 \pa EF$, and the midpoint of $E_1F_2$ lies on $AQ=AM$.  Hence, $EF$ and $AQ$ represent conjugate directions for both involutions $\psi_1$ and $\psi$ ($Q$ is the center of $\I$, lying on $E$ and $F$).  But we have already shown that $\psi_1$ has the conjugate pair of directions $AB$ and $QF$, and this pair is shared by $\psi$, since $AB$ is tangent to the conic $\I$ at $F$.  This proves that $\psi_1=\psi$.  \medskip

Once again, let $R_2$ be any point on the line $Ah_a(R_1)$.  If $E_3$ is the midpoint of $E_1E_2$, then $ME_3 \pa QE$.  It follows from what we have proved that $ME_3$ and $E_1E_2$ ($=AC$) are conjugate directions for the conic $\C_M$, which implies that $E_2$ must also lie on $\C_M$.  This shows that there is a unique conic on the points $E_1,F_1,E_2,F_2$ which induces the involution $\psi$ and whose center is the midpoint $M$ of $R_1R_2$.  Now let $R_2=\gamma_P(R_1)$.  By the above argument, there are conics $\C_1,\C_2,\C_3$, all with center $M$, and all inducing the same involution $\psi$, lying on the point sets $\{E_1,F_1,E_2,F_2\}, \{D_1,F_1,D_2,F_2\}$, and $\{D_1,E_1,D_2,E_2\}$, respectively.  However, any two of these conics share four points and induce the same involution on $l_\infty$.  For example, $\C_1 \cap \C_2$ contains $\{F_1,F_2,F_1',F_2'\}$, where $F_i'$ is the reflection of $F_i$ through $M$.  By Proposition \ref{prop:uniqueC}, these conics must all be the same conic, the pedal conic of $R_1$ and $R_2$.  This completes the proof.
\end{proof}

\begin{cor}
The maps on the line $l_\infty$ induced by $h_a,h_b, h_c$ commute with the involution $\psi$.
\end{cor}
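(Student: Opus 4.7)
The plan is to identify the restriction of each $h_a$, $h_b$, $h_c$ to $l_\infty$ as a concrete involution on the projective line $l_\infty$, verify that its pair of fixed points is a $\psi$-conjugate pair, and then invoke the standard fact that on a projective line an involution with fixed points $\{X,Y\}$ commutes with any involution sending $X$ to $Y$.

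First I would identify $h_a|_{l_\infty}$. Since $h_a$ is an affine reflection about $AQ$ in the direction of $EF$, every line parallel to $AQ$ and every line parallel to $EF$ is invariant under $h_a$. Consequently the induced projective involution $h_a|_{l_\infty}$ is the non-identity involution on $l_\infty$ whose two fixed points are the points at infinity of $AQ$ and of $EF$. By I, Theorem 2.4, the line $AQ$ passes through the midpoint of the chord $EF$ of $\I$, so $AQ$ and $EF$ have directions conjugate with respect to $\I$; this is precisely the assertion that $\psi$ interchanges the two fixed points of $h_a|_{l_\infty}$. (This conjugacy of directions has already been used in the proof of Proposition \ref{prop:Pedalinv}.)

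To finish, I would cite (or verify in one line) the elementary fact on $\mathbb{P}^1$: if $\sigma$ is the involution fixing $X$ and $Y$ pointwise and $\tau$ is any involution with $\tau(X)=Y$, then $\sigma\tau=\tau\sigma$. Choosing a projective coordinate $t$ on $l_\infty$ with $X=0$, $Y=\infty$, one has $\sigma(t)=-t$ and $\tau(t)=c/t$ for some constant $c$, whence $\sigma\tau(t)=-c/t=\tau\sigma(t)$. The identical argument, now using the conjugate-diameter pairs $(BQ,DF)$ and $(CQ,DE)$ furnished by I, Theorem 2.4 (since $BQ$ and $CQ$ pass through the midpoints of $DF$ and $DE$ respectively), handles $h_b$ and $h_c$.

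I do not expect a serious obstacle: the only point requiring a moment's care is correctly identifying the fixed-point pair of the induced involution $h_a|_{l_\infty}$, and this is immediate once one writes down the linear part of the affine reflection $h_a$ in a basis aligned with the directions of $AQ$ and $EF$.
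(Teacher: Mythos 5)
Your argument is correct, but it takes a genuinely different route from the paper's. The paper exhibits an $h_a$-invariant conic that induces $\psi$ on $l_\infty$: either the pedal conic $\C_M$ (since $h_a$ permutes the point set $\{E_1,F_1,E_2,F_2\}$ and fixes $M$), or, in its alternative argument, the inconic $\I$ itself (because the center $EF\cdot l_\infty$ of the homology $h_a$ is the pole of its axis $AQ$ with respect to $\I$); invariance of the conic then immediately gives $h_a\psi h_a^{-1}=\psi$. You instead work entirely on $l_\infty$: you identify $h_a|_{l_\infty}$ as the unique non-identity involution fixing the points at infinity of $AQ$ and $EF$, observe that $\psi$ interchanges these two points (the same conjugate-diameter fact from I, Theorem 2.4 that underlies the paper's alternative argument), and conclude by the elementary commutation criterion for involutions of $\mathbb{P}^1$. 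Your version is more self-contained --- it needs no conic-preservation step, only the conjugacy of the two directions --- while the paper's is a one-liner given the pedal-conic machinery already in place, and its $\I$-invariance variant yields the stronger planar statement $h_a(\I)=\I$, not merely a statement on $l_\infty$. One small slip: lines parallel to $AQ$ (other than $AQ$ itself) are \emph{not} invariant under $h_a$; only their common direction is preserved. But that weaker fact --- or your closing remark that the linear part of $h_a$ is $\mathrm{diag}(1,-1)$ in a basis aligned with $AQ$ and $EF$ --- is all that is needed to see that the point at infinity of $AQ$ is fixed by $h_a|_{l_\infty}$, so the argument stands.
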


\begin{proof}
For example, $h_a \psi h_a^{-1}=\psi$ follows from the fact that when $R_2=h_a(R_1)$, the map $h_a$ takes the point set $\{E_1,F_1,E_2,F_2\}$ to itself and fixes $M$.  Therefore, $h_a(\C_M)=\C_M$.  Alternatively, the center $EF \cdot l_\infty$ of the homology $h_a$ is the pole of its axis $AQ$, with respect to the conic $\I$, and therefore $h_a$ maps $\I$ to itself.  See \cite{co2}, p. 76, Exer. 4.
\end{proof}

\begin{lem}
\label{lem:mapl}
If $l$ is any line not lying on  a vertex of triangle $ABC$, then $\gamma_P(l)$ is a circumconic for the triangle.
\end{lem}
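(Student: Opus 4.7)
The plan is to realize $\gamma_P(l)$ as a Steiner conic produced by three projectively related pencils of lines based at the vertices.

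First I would verify that for each vertex $V\in\{A,B,C\}$ with associated affine reflection $h_v$, the map $\phi_V\colon l\to\mathrm{pencil}(V)$ defined by $X\mapsto Vh_v(X)$ is a projectivity. It factors as the perspectivity $X\mapsto VX$ (well defined since $V\notin l$ by hypothesis) followed by the projective involution that $h_v$ induces on $\mathrm{pencil}(V)$, $h_v$ fixing $V$ and therefore acting projectively on the lines through it.

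Next I would apply Steiner's theorem to the composition $\phi_B\phi_A^{-1}\colon\mathrm{pencil}(A)\to\mathrm{pencil}(B)$: the locus $\Lambda_{AB}=\{\phi_A(X)\cap\phi_B(X):X\in l\}$ is either a non-degenerate conic through $A$ and $B$, or---when $\phi_B\phi_A^{-1}$ is a perspectivity---a line. Analogous loci $\Lambda_{BC}$ and $\Lambda_{CA}$ are defined for the other two pairs, and by construction $\gamma_P(l)\subset\Lambda_{AB}\cap\Lambda_{BC}\cap\Lambda_{CA}$. Moreover $\gamma_P$ is an involution---directly from the construction, since each $h_v$ is involutive and fixes $V$, so that $Vh_v(\gamma_P(X))=VX$ for every vertex---with only the four fixed points $Q,Q_a,Q_b,Q_c$; hence it is injective on $l$ and $\gamma_P(l)$ is infinite. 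Because two distinct algebraic curves of degree at most two share at most four points, no two distinct $\Lambda$-loci can both contain the infinite set $\gamma_P(l)$. Thus the three loci coincide, and as soon as any one of them is a Steiner conic, the common locus passes through each of $A,B,C$ (being the Steiner conic for each pair), so $\gamma_P(l)$ is a circumconic.

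The main obstacle is excluding the degenerate scenario in which all three pair-projectivities are simultaneously perspectivities, forcing $\Lambda_{AB}=\Lambda_{BC}=\Lambda_{CA}$ to be a single line. The perspectivity criterion for the pair $(A,B)$ reduces to $l$ containing the specific point $Y_{AB}=Ah_a(B)\cap Bh_b(A)$, with analogous points $Y_{BC}$ and $Y_{CA}$; I would rule out a line $l$ meeting all three such points by using the hypothesis that $l$ misses every vertex of $ABC$ together with the distinctness of the three reflection directions $EF,DF,DE$. Alternatively, Proposition \ref{prop:Q^2} already shows, via the barycentric reciprocal-conjugation formula, that the image of a line missing the vertices under $\gamma_P$ is cut out by a non-degenerate quadratic vanishing at $A,B,C$, which guarantees that at least one of the three Steiner loci is a genuine conic, and then the coincidence argument above delivers the conclusion.
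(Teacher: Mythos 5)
Your core construction---realizing $\gamma_P(l)$ as the Steiner locus of intersections of corresponding lines under the projectivity $\phi_B\phi_A^{-1}$ between the pencils at $A$ and $B$---is exactly the paper's argument, and that part is sound. The divergence, and the soft spot, is how the degenerate (perspectivity) case is excluded. The paper does it in one line: since $l$ misses the vertices, it meets each side of $ABC$ in a non-vertex point, and $\gamma_P$ carries $l\cdot BC$, $l\cdot CA$, $l\cdot AB$ to the opposite vertices $A$, $B$, $C$ (because $h_b$ maps line $BC$ to $BA$ and $h_c$ maps $CB$ to $CA$, these pairs being harmonic with respect to $BQ$, $l_b$ and $CQ$, $l_c$ since $BQ$ and $CQ$ bisect $DF$ and $DE$ by I, Theorem 2.4); so the locus contains three non-collinear points and cannot be a line. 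Your primary route instead defers the exclusion to an unperformed computation with the points $Y_{AB}=Ah_a(B)\cdot Bh_b(A)$, and the fact you propose to invoke (``distinctness of the three reflection directions'') is not the one that does the work. Had you computed these points the argument would have collapsed pleasantly: $h_a$ interchanges the lines $AB$ and $AC$, so $Ah_a(B)=AC$ and $Bh_b(A)=BC$, whence $Y_{AB}=C$, and the perspectivity criterion for the pair $(A,B)$ is precisely ``$C\in l$''---excluded outright by hypothesis. Each pair-locus is therefore already a nondegenerate conic, and the apparatus of coinciding loci and the four-common-points bound is unnecessary. As written, though, this step is a promissory note rather than a proof, so the degeneracy exclusion is a genuine gap in the primary argument.

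Two further remarks. The fallback via Proposition \ref{prop:Q^2} is not circular (that proposition does not rest on Lemma \ref{lem:mapl}), but it imports barycentric coordinates into a lemma that feeds the synthetic chain through Proposition \ref{prop:gammaCO} to the TCC-Perspector Theorem, and the paper explicitly quarantines Proposition \ref{prop:Q^2} from the rest of the development; it should not be load-bearing here. Also, your injectivity claim needs a caveat: $\gamma_P$ is not globally injective---it collapses each side of $ABC$, minus its two vertices, to the opposite vertex---so ``involution, hence injective'' is valid only off the sides of $ABC$; this still makes $\gamma_P(l)$ infinite, but, as noted, you never actually need that once the perspectivity case is seen to be vacuous.
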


\noindent {\bf Remark.}  In particular, this lemma also holds for the isotomic map $\iota = \gamma_G$. \smallskip

\begin{proof}
The image $\gamma_P(l)$ is the locus of points $h_a(AR) \cdot h_b(BR)$, for points $R$ on $l$.  Since $h_a$ is a projective collineation, we have that
\[h_a(AR) \ \barwedge  \ AR \ \stackrel{l}{\doublebarwedge}  \ BR \ \barwedge  \ h_b(BR).\]
($\barwedge$ denotes a projectivity.)  Thus, the locus $h_a(AR) \cdot h_b(BR)$ is either a line or a conic.  However, if $R$ lies on a side of $ABC$, then $\gamma_P(R)$ is the opposite vertex.  Thus, $\gamma_P(l)$ lies on the vertices of $ABC$ and is therefore a conic.
\end{proof}

\begin{prop}
\label{prop:gammaCO}
For any point $P$ not on the sides of $ABC$ or $K^{-1}(ABC)$, we have that $\gamma_P(l_\infty)=\tilde{\C}_O$.
\end{prop}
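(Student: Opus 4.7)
My plan is to apply Proposition \ref{prop:uniqueC} to identify $\gamma_P(l_\infty)$ with $\tilde{\C}_O$ by matching the involutions they induce on $l_\infty$.

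By Lemma \ref{lem:mapl}, $\C^* := \gamma_P(l_\infty)$ is a circumconic of $ABC$. I would then observe that $\tilde{\C}_O$ and the inconic $\I$ induce the same involution $\psi$ on $l_\infty$: the map $\textsf{M}$ of III, Theorem 3.4 is a homothety or translation taking $\tilde{\C}_O$ to $\I$, and every such affine map fixes $l_\infty$ pointwise; hence the polarity of $\tilde{\C}_O$ restricted to $l_\infty$ is conjugated to that of $\I$ by the identity, and the two involutions coincide. By Proposition \ref{prop:uniqueC}, it then suffices to show that $\C^*$ also induces $\psi$.

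For this main step, I would invoke the Corollary following Proposition \ref{prop:Pedalinv}, which asserts that the reflections $h_a$, $h_b$, $h_c$ commute with $\psi$ on $l_\infty$. Since $\gamma_P|_{l_\infty}$ is built projectively from these three reflections together with the perspectivities through $A$, $B$, $C$, this commutation should transfer into the identity $\psi_{\C^*} = \psi$. A concrete way to package this is to verify the three pairings $\psi_{\C^*}(BC_\infty) = QD_\infty$, $\psi_{\C^*}(CA_\infty) = QE_\infty$, $\psi_{\C^*}(AB_\infty) = QF_\infty$, which are the corresponding pairings under $\psi$ because $\I$ has center $Q$ and is tangent to $BC$, $CA$, $AB$ at $D$, $E$, $F$. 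These three pairings pin down the involution, and together with $\C^* \ni A, B, C$ they force the center of $\C^*$ (the concurrence of the diameters bisecting the three sides) to be the intersection of the lines through $D_0$, $E_0$, $F_0$ parallel to $QD$, $QE$, $QF$ --- which is $O$ by definition.

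The main obstacle will be this last step: cleanly translating the commutation of the three reflections with $\psi$ into the identity $\psi_{\C^*} = \psi$. If a direct traceback through the definition of $\gamma_P$ proves awkward, a natural alternative is to compute the center of $\C^*$ directly, by identifying two specific chords of $\C^*$ (for instance the sides $BC$ and $CA$), locating their midpoints $D_0$, $E_0$, and checking via the $h_a$, $h_b$, $h_c$ construction that the bisecting diameters through these midpoints are parallel to $QD$ and $QE$ respectively; their concurrence is then $O$, and uniqueness of circumconics with prescribed center finishes the proof.
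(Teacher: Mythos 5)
Your overall strategy---show $\C^{*}:=\gamma_P(l_\infty)$ is a circumconic via Lemma \ref{lem:mapl}, then identify it with $\tilde{\C}_O$ by matching the induced involutions on $l_\infty$ (equivalently, by computing its center) and invoking Proposition \ref{prop:uniqueC}---is coherent and genuinely different from the paper's route, which instead exhibits six common points of $\C^{*}$ and $\tilde{\C}_O$ (namely $A,B,C$ and the midpoints of the sides of the anticevian triangle $Q_aQ_bQ_c$). Your preliminary observation that $\tilde{\C}_O$ and $\I$ induce the same involution $\psi$ on $l_\infty$, because $\textsf{M}$ is a homothety or translation fixing $l_\infty$ pointwise, is correct. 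But the proof has a genuine gap exactly where you flag ``the main obstacle'': you never establish the pairings $\psi_{\C^{*}}(BC\cdot l_\infty)=QD\cdot l_\infty$, etc. The appeal to the Corollary of Proposition \ref{prop:Pedalinv} does not deliver this: knowing that $h_a,h_b,h_c$ commute with $\psi$ on $l_\infty$ says nothing a priori about the involution of conjugate points that the \emph{image conic} $\C^{*}$ induces, because $\gamma_P\colon l_\infty\to\C^{*}$ is a parametrization of $\C^{*}$ by a line, not a statement about poles and polars of $\C^{*}$; moreover, commuting with those three involutions does not by itself single out $\psi$ among involutions on $l_\infty$. Your fallback---``checking via the $h_a,h_b,h_c$ construction that the bisecting diameters through $D_0$, $E_0$ are parallel to $QD$, $QE$''---is precisely the missing content restated, not an argument: to locate a diameter of $\C^{*}$ you need a second chord parallel to $BC$ together with its midpoint, i.e.\ you need further explicit points of $\C^{*}$ beyond $A,B,C$. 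That is exactly what the paper supplies, by a harmonic-conjugate computation showing $\gamma_P(EF\cdot l_\infty)$ is the midpoint of $Q_bQ_c$ (and similarly for the other two sides), after which $\C^{*}$ and $\tilde{\C}_O$ share six points.

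Two smaller points. First, the proposition is asserted for every $P$ off the sides of $ABC$ and $K^{-1}(ABC)$, including $P\in\iota(l_\infty)$, where $Q$ is infinite and the inconic is not a central conic; your argument leans on Proposition \ref{prop:uniqueC} (stated for central conics) and on the pedal-conic machinery, which the paper only sets up for $P\notin\iota(l_\infty)$, so even once repaired your proof would cover a strictly smaller range of $P$ than the statement (the paper's six-point argument, and its separate treatment of $P=G$, avoids this). Second, if you do obtain the three pairings, the concluding step is fine: the polar of $BC\cdot l_\infty$ with respect to $\C^{*}$ is then the line through $D_0$ parallel to $QD$, so the center of $\C^{*}$ is $O$ by definition of $O$, and uniqueness finishes it. So the skeleton is sound; what is missing is the actual geometric computation at its core.
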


\begin{proof}
If $P=G$, then $P'=Q=G$, $\gamma_P=\iota$ is the isotomic map, and $O=T_G^{-1}(K(G))=K^{-1}(K(G))=G$, so the conic $\tilde \cC_O=\iota(l_\infty)$ is the Steiner circumellipse.  Now assume $P \neq G$.  From the previous lemma we know that $\gamma_P(l_\infty)$ is a circumconic of $ABC$.  We will show that $\gamma_P(l_\infty)$ lies on the midpoints of the sides of the anticevian triangle of $Q$ with respect to $ABC$, which is $Q_aQ_bQ_c=T_{P'}^{-1}(ABC)$.  This will show that $\gamma_P(l_\infty)$ has six points in common with $\tilde{\C}_O$, by III, Theorem 2.4.  It suffices to show that $\gamma_P(EF \cdot l_\infty)$ is the midpoint of $Q_bQ_c$.  Let $Y=EF \cdot l_\infty$.  We know that the map $h_a$ fixes the line $AY=Q_bQ_c$, since this line lies on $A$ and is parallel to $EF$ (I, Theorem 3.9).  Furthermore, $h_b$ takes the line $BY$ to its harmonic conjugate with respect to $BQ$ and $B(DF \cdot l_\infty)$.  The section of this harmonic set of lines by $AY=Q_bQ_c$ is a harmonic set of points.  Now $BQ$ lies on $Q_b$, while $B(DF \cdot l_\infty)$ lies on $Q_c$.  Since $BY \pa Q_bQ_c$, it follows that $h_b(BY)$ intersects $Q_bQ_c$ at the midpoint of $Q_bQ_c$, which coincides with the intersection $AY \cdot h_b(BY)=Ah_a(Y) \cdot B h_b(Y)=\gamma_P(Y)$.  A similar argument shows that $\gamma_P(DF \cdot l_\infty)$ is the midpoint of $Q_aQ_c$ and $\gamma_P(DE \cdot l_\infty)$ is the midpoint of $Q_aQ_b$.  To finish the proof we just have to check that at least two of these midpoints do not coincide with the vertices $A,B$, or $C$.  If $A$ is the midpoint of $Q_bQ_c$, then $T_{P'}(A)=D_3$ is the midpoint of $T_{P'}(Q_bQ_c)=BC$ (I, Corollary 3.11), which implies that the point $P'$ lies on the median of $ABC$ through $A$.  If two midpoints coincide with vertices, then $P=P'=G$ is the centroid.
\end{proof}

Combined with Corollary \ref{cor:Simsonconv}, Proposition \ref{prop:gammaCO} shows that any point $R_1$ for which
\[R_1D_1 \pa QD, \ R_1E_1 \pa QE, \ R_1F_1 \pa QF,\]
where $D_1,E_1,F_1$ are collinear points lying on the respective sides $BC, AC, AB$, must lie on the circumconic $\tilde{\C}_O$, since $R_2=\gamma_P(R_1)$ lies on $l_\infty$.  This is just the converse of the generalized Simson theorem.  See \cite{ac}, p. 140.  To prove Simson's theorem in this situation we first prove the following property.  (Cf. Prop. 288 in \cite{ac}.)

\begin{prop}
\label{prop:Coinv}
Assume $P$ does not lie on $\iota(l_\infty)$.  Given the point $R_1$ on the circumconic $\tilde{\C}_O$, let $D_1,E_1,F_1$ denote the `feet' of the parallels dropped to $BC,AC,AB$ from $R_1$ in the directions $QD, QE, QF$.  Let $A', B', C'$ be the second intersections of the lines $R_1D_1, R_1E_1, R_1F_1$ with $\tilde{\C}_O$.  Then the lines $AA', BB', CC'$ are parallel; i.e., triangles $ABC$ and $A'B'C'$ are perspective from a point on $l_\infty$.
\end{prop}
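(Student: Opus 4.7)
The plan is to recast the proposition as the coincidence of three projectivities and then verify the coincidence on three special inputs.

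For each vertex $V\in\{A,B,C\}$ let $d_V$ denote the associated direction in $\{QD,QE,QF\}$, and define the map $\rho_V\colon \tilde\C_O\to l_\infty$ by
\[
\rho_V(X) \;=\; \bigl(V\cdot \tau_{d_V}(X)\bigr)\cdot l_\infty,
\]
where $\tau_{d_V}$ is the involution of $\tilde\C_O$ sending a point to the second intersection of its chord in direction $d_V$. Thus $\rho_A(R_1)=AA'\cdot l_\infty$ is by construction the direction of $AA'$, and similarly for $\rho_B,\rho_C$. Each $\rho_V$ is the composite of the projective involution $\tau_{d_V}$, the Steiner correspondence between the conic and the pencil at $V$, and the natural identification of that pencil with $l_\infty$; hence each $\rho_V$ is itself a projectivity $\tilde\C_O\to l_\infty$.

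In these terms the proposition asserts exactly that $\rho_A(R_1)=\rho_B(R_1)=\rho_C(R_1)$ for every $R_1\in\tilde\C_O$, the common value being the perspector on $l_\infty$. By the fundamental theorem of projective geometry it therefore suffices to verify the two equalities $\rho_A=\rho_B$ and $\rho_A=\rho_C$ at three distinct positions of $R_1$. The natural choices are the vertices themselves: $R_1=A, B, C$ (which of course lie on $\tilde\C_O$). For instance at $R_1=A$, the assertion $\rho_A(A)=\rho_B(A)=\rho_C(A)$ is the purely geometric statement that the chord of $\tilde\C_O$ through $A$ in direction $QD$ is parallel to the chord $BB^\ast$ joining $B$ to the second intersection of $\tilde\C_O$ with the line through $A$ in direction $QE$, and parallel also to the analogous chord at $C$ with direction $QF$. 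This, in turn, I would derive from the facts that $QD,QE,QF$ are the diameters of the inconic $\I$ through its contact points $D,E,F$ with the sides of $\triangle ABC$, and that the homothety/translation $\textsf{M}$ of \cite{mm3}, Theorem 3.4 carries $\tilde\C_O$ onto $\I$ and in particular carries $A,B,C$ onto specific points of $\I$ related to the reflections $h_a(D),h_b(E),h_c(F)$ (as computed in the proof of Proposition~\ref{prop:PS}).

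The principal obstacle is the verification of $\rho_A(V)=\rho_B(V)=\rho_C(V)$ at the three special points $V=A,B,C$: this is a concrete concurrence-of-three-directions statement involving the interplay of the circumconic $\tilde\C_O$, the inconic $\I$, the center $Q$, and the contact points $D,E,F$. Once the three special equalities are in hand, the coincidence of the three projectivities is automatic and the proposition follows.
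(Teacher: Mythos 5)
Your reduction is sound in outline and genuinely different in organization from the paper's argument: you package the three pencils into projectivities $\rho_A,\rho_B,\rho_C\colon\tilde\C_O\to l_\infty$ and invoke the fundamental theorem, whereas the paper composes pairs of involutions on $\tilde\C_O$ (cut out by secants through the ideal points of $QD$, $QE$, $QF$) and shows the axis of the resulting projectivity is $l_\infty$ by exhibiting two cross-join points at infinity. Both routes ultimately rest on the same concrete parallelisms, and your framing is arguably the cleaner bookkeeping. But as submitted the proof has a genuine gap: you explicitly defer ``the principal obstacle,'' namely the verification of $\rho_A(V)=\rho_B(V)=\rho_C(V)$ at $V=A,B,C$, and the route you sketch for it --- via the homothety $\textsf{M}$ carrying $\tilde\C_O$ to $\I$ and the reflections $h_a(D),h_b(E),h_c(F)$ from Proposition \ref{prop:PS} --- is not the mechanism that makes these equalities work, and I do not see how to complete it along those lines. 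What is actually needed is: (i) the fact that $\tilde\C_O$ and $\I$ induce the same involution of conjugate directions on $l_\infty$ (III, Proposition 3.6), so that the direction conjugate to $BC$ with respect to $\tilde\C_O$ is the direction of $QD$ (because $BC$ is tangent to $\I$ at $D$ and $Q$ is the center of $\I$), and likewise for the other sides; and (ii) the diameter argument: writing $C^*=\textsf{R}_O(C)$, the chord $CC^*$ is a diameter of $\tilde\C_O$, so $BC$ and $BC^*$ lie in conjugate directions, whence $BC^*\parallel QD$. From these one gets $\tau_{QE}(A)=C^*$, $\tau_{QF}(A)=B^*$, and $AA'=AH\parallel BC^*\parallel CB^*$, which is exactly $\rho_A(A)=\rho_B(A)=\rho_C(A)$; the cases $V=B,C$ are symmetric. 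These are precisely the facts (together with III, Proposition 3.2 and Corollary 3.7 concerning the points $S_1$, $S_2$, $A^*$, $B^*$, $C^*$) on which the paper's own proof turns.

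Two smaller points you should make explicit if you complete the argument this way. First, $\rho_V$ must be defined at the exceptional point where $\tau_{d_V}(X)=V$ by using the tangent to $\tilde\C_O$ at $V$ in place of the join $V\cdot\tau_{d_V}(X)$; this is standard for the Steiner correspondence but needs saying, since one of your three test points is such an exceptional point for one of the three maps. Second, the fundamental-theorem step requires that the three test points be points at which all three maps are defined and that the maps really are projectivities $\tilde\C_O\to l_\infty$; your factorization through $\tau_{d_V}$, the Steiner map at $V$, and the section by $l_\infty$ does establish this, so that part is fine once stated carefully.
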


\begin{figure}
\[\includegraphics[width=4.5in]{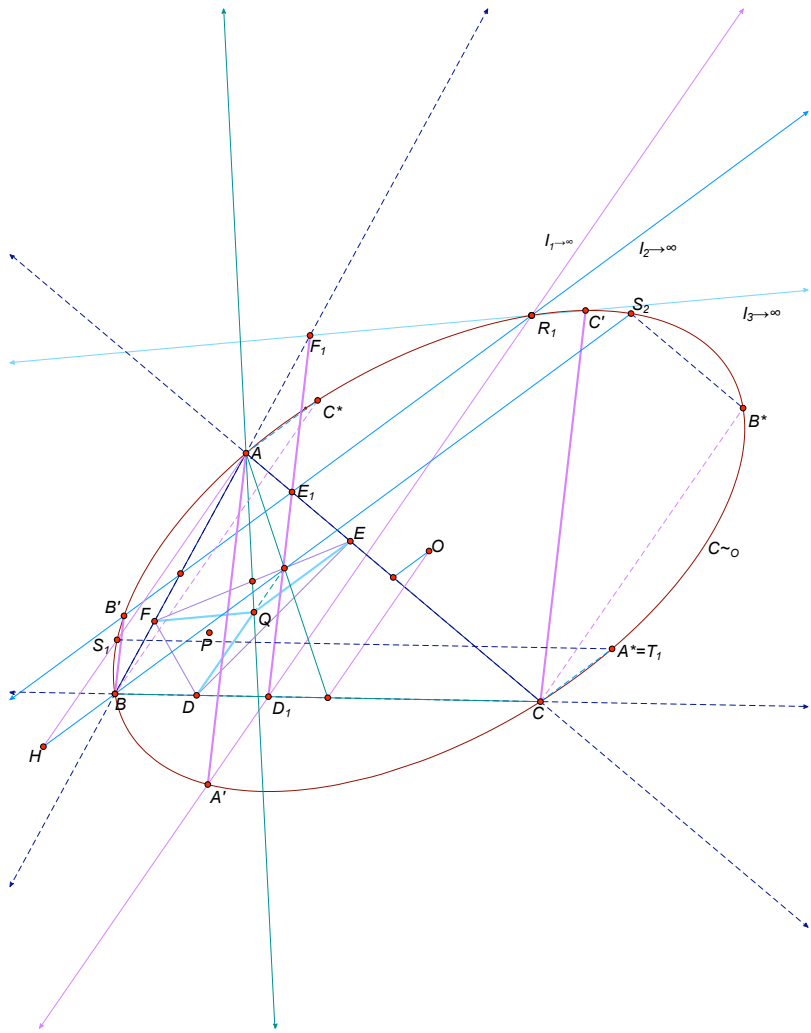}\]
\caption{Proof of Proposition \ref{prop:Coinv} and Theorem \ref{thm:Sim}}
\label{fig:3.2}
\end{figure}

\begin{proof}
(See Figure \ref{fig:3.2}.)  Let $I_1$ and $I_2$ denote the intersections of the lines $QD, QE$ with $l_\infty$, and consider the involution on the conic $\tilde{\C}_O$ given by intersecting it with secant lines through $I_1$.  Then $R_1$ maps to the second intersection $A'$ of the conic with $R_1D_1$, and $A$ maps to the second intersection of the conic with $AH$, where $H$ is the generalized orthocenter for $P$, which is $S_1$, in the notation of III, Proposition 3.2, since $AH \pa QD$.  Furthermore, $B$ maps to the reflection $\textsf{R}_O(C)$ of $C$ in $O$, which we denote by $C^*$.  This is because the direction of $BC^*$ is conjugate (with respect to $\tilde{\C}_O$) to the direction of $BC$, since $CC^*$ is a diameter, and therefore $BC^* \pa QD$.  In the same way, $C$ maps to $B^*=\textsf{R}_O(B)$.  Thus we have the involution
\[R_1ABC \ \barwedge  \ A'S_1C^*B^*.\]
There is also an involution on $\tilde{\C}_O$ given by intersecting it with secant lines through $I_2$, for which we have
\[R_1ABC \ \barwedge  \ B'C^*S_2A^*,\]
where $S_2$ is the second intersection of $BH$ with the conic and $A^*=\textsf{R}_O(A)$.  This gives the projectivity
\[B'C^*S_2A^* \ \barwedge  \ A'S_1C^*B^*.\]
If the axis of this projectivity is $l$, then $S_1A^* \cdot C^*B^*$ and $S_2B^* \cdot C^*A^*$ lie on $l$.  But both of these points lie on $l_\infty$, by III, Proposition 3.2 and III, Corollary 3.7 (where $A^*$ is denoted by $T_1$), since $C^*B^* \pa CB \pa S_1A^*$ and $C^*A^* \pa CA \pa S_2B^*$.  Hence, $l=l_\infty$.  It follows from the projectivity that $A^*A' \pa B^*B'$.  In the same way, using the product of two involutions on $\tilde{\C}_O$ defined by taking secant lines through the ideal points $I_1$ and $I_3$ (lying on $QF$), we see that $A^*A' \pa C^*C'$.  Now the lines $AA', BB', CC'$ lie in conjugate directions to the parallel lines $A^*A', B^*B', C^*C'$, and are therefore parallel to each other.  This proves the proposition.
\end{proof}

\begin{thm}{(Simson's Theorem)}
\label{thm:Sim}
Assume $P$ does not lie on $\iota(l_\infty)$.  If $R_1$ lies on the circumconic $\tilde{\C}_O$, then the `feet' $D_1, E_1, F_1$ of the parallels dropped to $BC, AC, AB$ from $R_1$ in the directions $QD, QE, QF$, are collinear.
\end{thm}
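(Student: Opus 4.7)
The plan is to leverage Proposition \ref{prop:Coinv} together with two applications of Pascal's theorem on the conic $\tilde{\C}_O$. Since $R_1 \in \tilde{\C}_O$, I take $A', B', C'$ to be the second intersections of the lines $R_1D_1$, $R_1E_1$, $R_1F_1$ with $\tilde{\C}_O$, exactly as in Proposition \ref{prop:Coinv}. The crucial input from that proposition is that the three chords $AA'$, $BB'$, $CC'$ are parallel, so they concur at a common infinite point $V \in l_\infty$. Note that $D_1 = R_1A' \cd BC$, $E_1 = R_1B' \cd CA$, and $F_1 = R_1C' \cd AB$, by construction of $A',B',C'$.

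The main observation is that no single inscribed hexagon can produce all three of the points $D_1, E_1, F_1$ as opposite-side intersections, since $R_1$ can be adjacent to only two other vertices in a hexagon; however, two carefully chosen hexagons suffice. First, I apply Pascal to the hexagon $A' R_1 B' B C A$ inscribed in $\tilde{\C}_O$: its three pairs of opposite sides are $(A'R_1, BC)$, $(R_1B', CA)$, and $(B'B, AA')$, whose intersections are $D_1$, $E_1$, and $V$ respectively (the last because $BB' \pa AA'$). Pascal's theorem then asserts that $D_1$, $E_1$, $V$ are collinear, so the line $D_1 E_1$ has infinite point $V$, i.e.\ the direction picked out by Proposition \ref{prop:Coinv}.

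Next, I apply Pascal to the hexagon $B' R_1 C' C A B$. Its three pairs of opposite sides are $(B'R_1, CA)$, $(R_1C', AB)$, and $(C'C, BB')$, yielding the intersections $E_1$, $F_1$, and $V = C'C \cd BB'$. Pascal then gives that $E_1$, $F_1$, $V$ are collinear, so the line $E_1 F_1$ also has direction $V$. Since the lines $D_1 E_1$ and $E_1 F_1$ share the point $E_1$ and have the same direction $V$, they coincide, which proves that $D_1$, $E_1$, $F_1$ lie on a common line --- the generalized Simson line through $E_1$ with direction $V$. The only real bookkeeping, and the mildest potential obstacle, is verifying that the six vertices of each hexagon are distinct in the generic case, and disposing of the degenerate cases (such as $R_1$ coinciding with a vertex of $ABC$, or $A' = A$, $B' = B$, $C' = C$) by continuity or by invoking the tangent-line form of Pascal's theorem.
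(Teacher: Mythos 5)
Your proof is correct and follows essentially the same route as the paper: both invoke Proposition \ref{prop:Coinv} to get the common (infinite) perspector of $ABC$ and $A'B'C'$, and then apply Pascal's theorem to two inscribed hexagons containing $R_1$. The only immaterial difference is your choice of second hexagon ($B'R_1C'CAB$, giving $E_1,F_1,V$ collinear) versus the paper's $A'R_1C'CBA$ (giving $D_1,F_1,O_1$ collinear).
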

 \begin{proof}
From Proposition \ref{prop:Coinv} we have that $A, B, C, A', B', C', R_1$ are points on a conic with $ABC$ perspective to $A'B'C'$ from some point $O_1$.  Looking at the hexagon $A'R_1B'BCA$, Pascal's theorem implies that $R_1A' \cdot BC = D_1, R_1B' \cdot CA=E_1$, and $AA' \cdot BB'=O_1$ are collinear.  Similarly, the hexagon $A'R_1C'CBA$ gives that $R_1A' \cdot BC = D_1, R_1C' \cdot AB=F_1$, and $AA' \cdot CC'=O_1$ are collinear.  Hence, the points $D_1, E_1, F_1$ lie on the line $D_1O_1$, which is parallel to $AA'$.
\end{proof}

\begin{prop}
\label{prop:R1R2conic}
If the pedal triangles of two ordinary points $R_1$ and $R_2$ are inscribed in a common conic $\mathcal{P}$, inducing the same involution on $l_\infty$ as $\tilde{\C}_O$, then $R_2=\gamma_P(R_1)$, and $\mathcal{P}$ is the pedal conic for $R_1$ and $R_2$.
\end{prop}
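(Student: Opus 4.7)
Let $R_1' := \gamma_P(R_1)$. The plan is to show that $\mathcal{P}$ coincides with the pedal conic of the pair $(R_1, R_1')$ produced by Proposition \ref{prop:Pedalinv}; then the pedal vertices of $R_2$ and of $R_1'$ on each side of $ABC$ will match up, forcing $R_2 = R_1'$, after which the second assertion is immediate from Proposition \ref{prop:Pedalinv}.

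First I would verify that the involution induced by $\tilde{\C}_O$ on $l_\infty$ coincides with the involution $\psi$ induced by the inconic $\I$ that is used in Proposition \ref{prop:Pedalinv}. For $\tilde{\C}_O$ with center $O$, the direction conjugate to the chord $BC$ is that of the diameter through the midpoint $A_0$ of $BC$, namely $A_0O$. Since $K(A)=A_0$, $K(H)=O$, and the complement map $K$ preserves directions, $A_0O \parallel AH \parallel QD$, and $QD$ is precisely the conjugate direction to $BC$ for $\I$ (because $BC$ is tangent to $\I$ at $D$ and $Q$ is the center of $\I$). The analogous arguments for $CA$ and $AB$ provide two matching conjugate pairs, so the two involutions on $l_\infty$ are equal.

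Next, because the non-degenerate conic $\mathcal{P}$ contains the three pedal vertices $D_1, E_1, F_1$ of $R_1$, these points must be non-collinear; combining Corollary \ref{cor:Simsonconv} with Proposition \ref{prop:gammaCO} then shows that $R_1'$ is an ordinary point, so Proposition \ref{prop:Pedalinv} applies to the pair $(R_1, R_1')$ and yields a pedal conic $\mathcal{P}'$ containing both pedal triangles and inducing $\psi$ on $l_\infty$. Both $\mathcal{P}$ and $\mathcal{P}'$ now pass through the three non-collinear points $D_1, E_1, F_1$ and induce the same involution on $l_\infty$, so Proposition \ref{prop:uniqueC} gives $\mathcal{P} = \mathcal{P}'$.

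It remains to deduce $R_2 = R_1'$ from $\mathcal{P} = \mathcal{P}'$. Each side of $ABC$ meets $\mathcal{P}$ in at most two points; generically the two intersections of $\mathcal{P}$ with $BC$ are $\{D_1, D_2\} = \{D_1, D_1'\}$, forcing $D_2 = D_1'$, and likewise $E_2 = E_1'$ and $F_2 = F_1'$. Since $R_2$ is pinned down by any two of its pedal vertices together with the pairwise distinct parallel directions $QD, QE, QF$, we conclude $R_2 = R_1'$. The main technical obstacle I anticipate is the degenerate subcase in which, say, $D_1 = D_1'$ (so $\mathcal{P}$ is tangent to $BC$ at $D_1$ and $R_1, R_1'$ lie on a common parallel to $QD$ through $D_1$); one disposes of this by noting that the same tangency then applies on the $R_2$ side, so that $R_2$ and $R_1'$ lie on the same line through $D_1$, and matching on either of the two remaining sides (where the intersections with $\mathcal{P}$ determine a unique second pedal vertex) still pins down the same point.
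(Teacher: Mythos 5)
Your proof follows essentially the same route as the paper's: show $\gamma_P(R_1)$ is ordinary, invoke Theorem \ref{thm:PedalTriangles} and Proposition \ref{prop:Pedalinv} to get its pedal conic with $R_1$, identify that conic with $\mathcal{P}$ via the uniqueness statement of Proposition \ref{prop:uniqueC}, and then read off $R_2$ from the intersections with the sides. Two remarks. First, your explicit verification that the inconic $\I$ and the circumconic $\tilde{\C}_O$ induce the same involution on $l_\infty$ (via the shared conjugate pairs $BC \leftrightarrow QD$, etc.) is a step the paper leaves implicit, and it is genuinely needed to reconcile the hypothesis (stated for $\tilde{\C}_O$) with Proposition \ref{prop:Pedalinv} (stated for $\I$); likewise your care with the tangency subcase in the final step goes beyond the paper's one-line dismissal. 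Second, there is a citation error: to get ``$D_1,E_1,F_1$ non-collinear $\Rightarrow$ $\gamma_P(R_1)$ ordinary'' you need the contrapositive of ``$\gamma_P(R_1)$ infinite $\Rightarrow$ feet collinear,'' i.e.\ Proposition \ref{prop:gammaCO} (which puts $R_1$ on $\tilde{\C}_O$ when $\gamma_P(R_1)\in l_\infty$) combined with Simson's Theorem \ref{thm:Sim}; Corollary \ref{cor:Simsonconv} is the converse implication and does not yield this. The fix is immediate --- replace the citation --- and it is exactly what the paper does.
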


\begin{proof}
First note that, with the notation from Theorem \ref{thm:PedalTriangles} (see Figure \ref{fig:2.2}), the points $D_1, E_1, F_1$ are not collinear, since they lie on the conic $\mathcal{P}$.  Theorem \ref{thm:Sim} shows that $R_1$ does not lie on the conic $\tilde{\C}_O$, and therefore the point $\gamma_P(R_1)$ is ordinary, by Proposition \ref{prop:gammaCO}.  Now Theorem \ref{thm:PedalTriangles} shows that the pedal triangles of $R_1$ and $\gamma_P(R_1)$ are inscribed in their common pedal conic $\mathcal{P}'$, which induces the same involution on $l_\infty$ as the given conic $\mathcal{P}$ (Proposition \ref{prop:Pedalinv}).  By Proposition \ref{prop:uniqueC} , $\mathcal{P}=\mathcal{P}'$, and since the intersections of $\mathcal{P}$ with the sides of $ABC$ determine the pedal triangle of $R_2$, it follows that $R_2=\gamma_P(R_1)$.
\end{proof}

\noindent {\bf Remark.} If $\mathcal{P}$ is the unique conic lying on the vertices of the pedal triangle of an ordinary point $R_1 \notin \tilde \cC_O$ inducing the same involution $\psi$ on $l_\infty$ as in  Proposition \ref{prop:Pedalinv}, then the other intersections of $\mathcal{P}$ with the sides of $ABC$ are precisely the vertices of the pedal triangle for $\gamma_P(R_1)$.  This follows in the same way as in the proof just given. \medskip

The next proposition shows that the generalized orthocenter $H$ and generalized circumcenter $O$ stand in the same relationship as their classical counterparts, if the ordinary isogonal map $\gamma$ is replaced by $\gamma_P$.

\begin{prop}
\label{prop:gammaH}
For any point $P$ not on the sides of $ABC$ or $K^{-1}(ABC)$, we have that $\gamma_P(O)=H$.
\end{prop}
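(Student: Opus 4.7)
The plan is to apply Proposition~\ref{prop:R1R2conic} with $R_1=O$ and $R_2=H$, by exhibiting a common pedal conic for $O$ and $H$ that induces the same involution on $l_\infty$ as $\tilde\cC_O$.

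I first identify the two pedal triangles directly from the definitions. Because $O$ is the common intersection of the three lines through $D_0,E_0,F_0$ parallel to $QD,QE,QF$, the line through $O$ in direction $QD$ coincides with the line through $D_0$ in that direction, and therefore meets $BC$ at $D_0$. Arguing symmetrically on the other sides, the pedal triangle of $O$ with respect to $P$ is the medial triangle $D_0E_0F_0$. On the other hand, since $AH\pa QD$ by the definition of $H$, the line through $H$ in direction $QD$ is the cevian $AH$, which meets $BC$ at $D_H=AH\cd BC$; so the pedal triangle of $H$ is the cevian triangle $D_HE_HF_H$ of $H$. Both triangles are inscribed in the nine-point conic $\Nh$ of the quadrangle $ABCH$ with respect to $l_\infty$, because $\Nh$ contains the midpoints $D_0,E_0,F_0$ of the sides of $ABC$ and the three diagonal points $D_H,E_H,F_H$ of the quadrangle.

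The key remaining step is to verify that $\Nh$ induces the same involution on $l_\infty$ as $\tilde\cC_O$. I would establish the sharper identity $\Nh=K(\tilde\cC_O)$ by a six-point coincidence: $K(A)=D_0$, $K(B)=E_0$, $K(C)=F_0$ all lie on $\Nh$; and since $\tilde\cC_O$ is invariant under point-reflection through its center $O$, the points $A^\ast=2O-A$, $B^\ast=2O-B$, $C^\ast=2O-C$ belong to $\tilde\cC_O$. Rewriting $O=K(H)$ as $H=3G-2O$, one computes $K(A^\ast)=\tfrac12(A+H)$, the midpoint of $AH$, which lies on $\Nh$, and analogous formulas hold for $B^\ast,C^\ast$. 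Hence $K(\tilde\cC_O)$ and $\Nh$ share six points and must coincide; since $K$ is a homothety and therefore fixes $l_\infty$ pointwise, the two conics induce the same involution on $l_\infty$.

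Applying Proposition~\ref{prop:R1R2conic} to $R_1=O$ and $R_2=H$ then yields $H=\gamma_P(O)$, as required. The main obstacle is the identification $\Nh=K(\tilde\cC_O)$; once this homothety between the nine-point conic and the circumconic is in hand, the rest of the argument consists of reading off the two natural pedal triangles. I would also need to verify the non-degeneracy hypotheses of Proposition~\ref{prop:R1R2conic}---that $O$ and $H$ are ordinary and distinct from $Q,Q_a,Q_b,Q_c$---and treat separately the degenerate cases (for instance when $H$ becomes infinite or one of $O,H$ coincides with a vertex of $ABC$) by direct check.
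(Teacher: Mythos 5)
Your proposal is correct and follows essentially the same route as the paper: both identify the pedal triangle of $O$ as the medial triangle and that of $H$ as its cevian triangle, observe that both are inscribed in the nine-point conic $\Nh$ of $ABCH$ inducing the same involution on $l_\infty$ as $\tilde{\cC}_O$, and conclude via Proposition \ref{prop:R1R2conic}. The only difference is that the paper simply cites III, Proposition 3.6 for the involution statement, whereas you reprove it inline through the homothety $\Nh=K(\tilde{\cC}_O)$; your flagged degenerate cases ($H$ a vertex, $P$ on $\iota(l_\infty)$) are exactly the ones the paper disposes of directly.
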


\begin{proof}
(See Figure \ref{fig:3.2}.)  First assume $P$ does not lie on the Steiner circumellipse $\iota(l_\infty)$.  If the point $H$ does not coincide with a vertex, the nine-point conic $\Nh$ for the quadrangle $ABCH$ is a conic on the vertices of the pedal triangles of the ordinary points $H$ and $O$, and induces the same involution on $l_\infty$ as $\tilde{\C}_O$, by III, Proposition 3.6.  The result follows from Proposition \ref{prop:R1R2conic}.  If $H$ coincides with the vertex $A$, for example, then $O=D_0$ is a point on the opposite side $BC$, so $\gamma_P(O)=A=H$.  If $P$ lies on $\iota(l_\infty)$, then the points $O$ and $H$ both coincide with $Q=P'$ (III, Corollary 2.3), which is fixed by $\gamma_P$.
\end{proof}

For the proof of the next result, recall that the point $X'$ is the center of the map $\mathcal{S}'=T_{P'} \circ T_P$.

\begin{prop}
\label{prop:OQ}
If the ordinary point $P$ does not lie on a median of $ABC$, then $\gamma_P(SQ)=\Cp$, and the tangent to $\Cp$ at $Q$ is $SQ$.  If $P$ does not lie on $\iota(l_\infty)$, then the line $SQ=OQ$.  In particular, the pole of the line $QQ'$ with respect to the conic $\Cp$ is $S$.
\end{prop}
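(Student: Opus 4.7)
The plan is to handle the three assertions in turn, combining Lemma \ref{lem:mapl} on the image of a line under $\gamma_P$ with the reciprocal-conjugation/involution structure of $\gamma_P$ (Propositions \ref{prop:PS} and \ref{prop:Q^2}) and the affine structure of the map $\textsf{M}$ from III, Theorem 3.4.

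For $\gamma_P(SQ) = \Cp$, I would first verify under the hypothesis that $P$ is not on a median that $SQ$ is a legitimate line avoiding the vertices of $ABC$: if $S = Q$, then the involution property of $\gamma_P$ forces $P = \gamma_P(Q) = Q$, which combined with $Q = K \circ \iota(P)$ puts $P$ on a median, and a similar case analysis rules out $A,B,C \in SQ$. Then Lemma \ref{lem:mapl} gives that $\gamma_P(SQ)$ is a circumconic of $ABC$. Since $\gamma_P(Q) = Q$ and $\gamma_P(S) = P$ (from Proposition \ref{prop:PS} combined with the involution property), $\gamma_P(SQ)$ passes through both $Q$ and $P$, identifying it with $\Cp$. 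For the tangent at $Q$, I would invoke the general fact that a reciprocal conjugation sends a line through a fixed point to a conic tangent to that line at the fixed point; this can be verified directly from the barycentric formula \eqref{eqn:5.2} of Proposition \ref{prop:Q^2} by parametrizing $SQ$ as $Q + \lambda v$ and computing the tangent vector to $\gamma_P(Q + \lambda v)$ at $\lambda = 0$, which turns out to lie in the span of $Q$ and $v$.

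For $SQ = OQ$ under the hypothesis $P \notin \iota(l_\infty)$, I would apply III, Theorem 3.4 directly: the map $\textsf{M} = T_P \circ K^{-1} \circ T_{P'}$ is a homothety (or translation) with center $S$ taking $\tilde \cC_O$ to $\I$, so it sends the center $O$ of $\tilde \cC_O$ to the center $Q$ of $\I$. The center of a homothety lies on every line joining a point to its image, giving $S \in OQ$; in the translation case $S$ is the point at infinity in the direction $\overrightarrow{OQ}$, and $SQ = OQ$ still holds. The hypothesis $P \notin \iota(l_\infty)$ (via III, Corollary 2.3) ensures $O$ is ordinary and distinct from $Q$.

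Finally, the pole of $QQ'$ reduces to showing that both $Q$ and $Q'$ lie on the polar of $S$ with respect to $\Cp$, since then the polar of $S$ must be the chord $QQ'$ and its pole must be $S$. The tangent claim already places $Q$ on the polar of $S$. For $Q'$ I would exploit the $P \leftrightarrow P'$ symmetry of $\textsf{M}$ (III, Proposition 3.12, together with the Appendix commutativity lemma cited in the proof of Proposition \ref{prop:PS}), which fixes $S$ while swapping $Q$ and $Q'$: applying the first two assertions of this proposition with $P'$ in place of $P$ yields a companion conic $\mathcal{C}_{P'} = \gamma_{P'}(SQ')$ tangent to $SQ' = OQ'$ at $Q'$. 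The step I expect to be most delicate is identifying $\mathcal{C}_{P'}$ with $\Cp$ — equivalently, showing that $P'$ and $Q'$ also lie on $\Cp$, which amounts via the involution to checking that $\gamma_P(Q') = X$ (the center of $\mathcal{S} = T_P \circ T_{P'}$ from Proposition \ref{prop:gpdp}(b)) lies on $SQ$ — so that the symmetric tangent information transfers back to $\Cp$ and places $Q'$ on the polar of $S$ with respect to $\Cp$ itself.
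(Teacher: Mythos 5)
Your overall architecture matches the paper's: rule out degeneracies so that Lemma \ref{lem:mapl} applies, identify $\gamma_P(SQ)$ by the two points $\gamma_P(S)=P$ and $\gamma_P(Q)=Q$, get $SQ=OQ$ from the homothety $\textsf{M}$ sending $O$ to $Q$, and derive the pole statement from the $P\leftrightarrow P'$ symmetry of $S$. But there are three genuine gaps. First, the hypothesis of Lemma \ref{lem:mapl} --- that $SQ$ misses $A,B,C$ --- cannot be dismissed as ``a similar case analysis,'' because the required argument is nothing like your $S=Q$ argument: the paper applies $T_{P'}$ to the line $OQ$ (if $C\in OQ$, then $T_{P'}(OQ)=K(Q)P'$ passes through $F_3$, hence equals $CP'$; since $K(Q)$ is the midpoint of $QP'$, one gets $G\in CP'$, so $P'$ lies on a median). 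Second, the first sentence of the proposition also covers $P\in\iota(l_\infty)$, where $Q=P'=O$ is an \emph{infinite} point; the paper devotes a separate paragraph to this case (identifying $S$ with $X'=G_2$ and showing $SQ=GQ$ is an asymptote of $\Cp$), while your outline tacitly assumes $Q$ ordinary. Third, for the pole of $QQ'$ you rightly flag the identification $\cC_{P'}=\Cp$ as the delicate point, but your proposed reduction to ``$\gamma_P(Q')=X$ lies on $SQ$'' only delivers $Q'\in\Cp$ (not $P'\in\Cp$), and that collinearity is itself only proved in the theorem \emph{following} this proposition; the paper instead imports $\cC_P=\cC_{P'}$ from Part II and concludes immediately.

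On the tangency at $Q$ your route genuinely diverges from the paper's, and not to your advantage. Differentiating the barycentric formula \eqref{eqn:5.2} does work, but the paper explicitly states that Proposition \ref{prop:Q^2} (its one non-synthetic proof) is used nowhere else, so leaning on it here undercuts the stated program. The paper's argument is shorter and synthetic: since $\gamma_P$ is an involution interchanging $SQ$ and $\Cp$, any $L\in SQ\cap\Cp$ has $\gamma_P(L)\in SQ\cap\Cp$ as well, so $L=Q$ or $L$ is a fixed point of $\gamma_P$; the fixed points other than $Q$ are $Q_a,Q_b,Q_c$, and each of $QQ_a,QQ_b,QQ_c$ passes through a vertex of $ABC$, which was just excluded. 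Hence $SQ\cap\Cp=\{Q\}$ and $SQ$ is the tangent. I would adopt that argument and keep the rest of your outline after filling the three gaps above.
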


\begin{proof}
First assume $P$ does not lie on $\iota(l_\infty)$.  By III, Theorem 3.9 we know that the point $S$ lies on $OQ$, and we claim that $S \neq Q$, so that $OQ=SQ$.  If $S = Q$, then since $S$ is the center of the map $\textsf{M}$ and $\textsf{M}(O) = Q$, where $Q$ and $O$ are ordinary points, we conclude that $O=S=Q$, yielding $T_{P'}(O) = T_{P'}(Q) = P'$ (I, Theorem 3.7).  On the other hand, $K(Q)=T_{P'}(O)=P'$ by the affine formula for $O$, so $K(Q) = P' = K^{-1}(Q)$, giving that $Q = G$ and $P$ lies on a median, which is contrary to hypothesis.  Hence, $OQ=SQ$.  We next check that the line $OQ$ does not lie on a vertex of $ABC$.  If $OQ$ lies on $C$, for example, then $T_{P'}(OQ)=K(Q)P'$ lies on the point $T_{P'}(C)=F_3$, i.e. $K(Q)P'=CP'$.  But $K(Q)$ is the midpoint of $P'Q$, so $Q$ and also $G$ lies on $CP'$, so that $P'$ lies on the median $CG$.  Since $S=\gamma_P(P)$ (Proposition \ref{prop:PS}) never coincides with a vertex of $ABC$ ($P$ never lies on a side of $ABC$), Lemma \ref{lem:mapl} shows that $\gamma_P(OQ)$ is a circumconic of $ABC$ lying on the points $\gamma_P(S)=P$ and $\gamma_P(Q)=Q$.  Since $P \neq Q$, this shows that $\gamma_P(OQ)=ABCPQ=\Cp$.  To show $SQ$ is tangent to $\Cp$, argue as in the proof of \cite{mmv}, Proposition 2.4.  If the point $L$ lies on $SQ \cap \Cp$, then $L$ and $\gamma_P(L)$ are in this intersection, so either $L=Q$ or $L=\gamma_P(L)$.  But the only fixed points of $\gamma_P$ are $Q$ and the vertices of the anticevian triangle of $Q$ with respect to $ABC$.  If $L$ were one of these vertices, then $SQ=QL$ would lie on a vertex of $ABC$, which we showed above to be impossible.  Therefore, $L=Q$ is the only point in $SQ \cap \Cp$, showing that $SQ$ is the tangent line at $Q$. \smallskip

Now assume $P$ does lie on $\iota(l_\infty)$.  Then $Q=P'=O \in l_\infty$ (see III, Corollary 2.3).  In this case, the map $\mathcal{S}=T_PT_{P'}=K^{-1}$ (I, Theorem 3.14) has center $X=G$.  From I, Corollary 3.11, we know that $X'=T_{P'}(X)=T_{P'}(G)=G_2$ is an ordinary point.  Since $X'$ is a fixed point of $\mathcal{S}'=T_{P'}T_P$, we have that
$$\textsf{M}(X')=T_{P'}K^{-1}T_P(X')=(T_{P'}T_P)^2(X')=X',$$
so $X'=G_2=S$ is the center of the map $\textsf{M}$, using the fact that $\textsf{M}$ is a translation or homothety with a unique ordinary fixed point (III, Theorem 3.4).  By II, Lemma 2.5, we also know that $G$ is the midpoint of segment $G_1G_2$, where $G_1=T_P(G)$, and $GG_1 \pa PP'$.  But $P'=Q$ in this case, so the infinite point $Q$ lies on the line $GG_2=GS$.  Then II, Theorem 4.3 shows that $GG_1=GS=SQ$ is an asymptote of $\Cp$; in other words, $SQ$ is the tangent to $\Cp$ at $Q$.  Since $P$ is not on a median of $ABC$, none of the vertices lie on the line $SQ=GS=GQ$, so the same argument as in the first paragraph of the proof shows that $\gamma_P(SQ)=\Cp$.  The last assertion of the proposition follows from the fact that the point $S=GV \cdot OQ = GV \cdot O'Q'$ is symmetric with respect to $P$ and $P'$, because it is the center of the map $\textsf{M}=T_P \circ K^{-1} \circ T_{P'}$, which is symmetric in $P$ and $P'$ (by III, Proposition 3.12b and Lemma \ref{lem:commute} in the Appendix).  Hence, $SQ'$ is the tangent to $\Cp=\mathcal{C}_{P'}$ at $Q'$.  
\end{proof} 

\begin{thm} Assume that the point $P$ is ordinary and does not lie on a median of $ABC$.
\begin{enumerate}[1.]
\item $X = PQ' \cdot SQ$ ($=PQ' \cdot OQ$, if $P$ does not lie on $\iota(l_\infty)$).
\item The following nine points are always collinear:
\begin{equation*}
X, \ T_P(P'), \ T_P(G), \ Q, \ S, \ O, \ \textsf{M}(Q), \ T_{P'}^{-1}(G), \ and \ T_{P'}^{-1}(Q)=T_P^{-1}(H).
\end{equation*}
\item $T_P(G)$ is the pole of $PQ$ with respect to the conic $\cC_P$, and the polar of $P$ is $p=PT_P(G)$. 
\item $\textsf{M}(Q)$ is the pole of $P'Q$ with respect to $\cC_P$.
\item The tangent to $T_{P'}(\cC_P)$ at $P'$ is $P'Q$.
\item The pole of $QQ'$ with respect to $T_{P'}^{-1}(\cC_P) = T_P^{-1}(\cC_P)$ is $G$.
\item $\textsf{M}(QQ') = K^{-1}(PP')$.
\item The tangent to $\cC_P$ at $H$ is $h=HT_{P}(P')$.
\end{enumerate}
\end{thm}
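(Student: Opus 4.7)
The unifying object throughout the theorem is the line $\ell := SQ$, which Proposition~\ref{prop:OQ} identifies as the tangent to $\cC_P$ at $Q$ and as coinciding with $OQ$ when $P \notin \iota(l_\infty)$. Every item is either a statement that a specific point lies on $\ell$, or a pole-polar identity for $\cC_P$ (or an affine image of it). My plan is to prove Item~2 first, placing all nine listed points on $\ell$; then Items~3--6 and~8 will follow from standard pole-polar arguments; finally Items~1 and~7 emerge as direct corollaries.

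For Item~2, the points $S$, $O$, $Q$ sit on $\ell$ by Proposition~\ref{prop:OQ}, and $\textsf{M}(Q) \in \ell$ because $\textsf{M}$ is a homothety or translation with fixed center $S$ (III, Theorem~3.4), so $S$, $Q$, $\textsf{M}(Q)$ are forced to be collinear. The remaining five points $X$, $T_P(P')$, $T_P(G)$, $T_{P'}^{-1}(G)$, $T_{P'}^{-1}(Q)=T_P^{-1}(H)$ I would place on $\ell$ by pulling $\ell$ back under $T_P$ and $T_{P'}^{-1}$: using the affine identities $T_{P'}(O) = K(Q)$ from III, Section~2, together with the definitions of $H$, $O$, $S$ from I--III, the line $T_{P'}(\ell)$ is a line through $Q$ and $K(Q)$, and similarly for $T_P^{-1}(\ell)$. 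The identity $T_{P'}^{-1}(Q) = T_P^{-1}(H)$ reduces to $T_P T_{P'}^{-1}(Q) = H$, which is an instance of the commutation identity recorded in Lemma~\ref{lem:commute} in the Appendix. Item~1 then falls out, since $X \in \ell$ is part of Item~2 and $X \in PQ'$ follows from the identification of $X$ as the perspector of $ABC$ and $A_3B_3C_3$ (I, Theorem~3.8) combined with $Q' = \delta_P(P) = K(P)$ (I, Corollary~3.6).

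For Items~3--6 and~8, note first that $P, Q \in \cC_P$ by construction and $H \in \cC_P$, the last because $\gamma_P(\ell) = \cC_P$ (Proposition~\ref{prop:OQ}) and $\gamma_P(O) = H$ (Proposition~\ref{prop:gammaH}). The pole of any chord of $\cC_P$ through two of these points is therefore the intersection of the corresponding tangents. Item~3: pole of $PQ$ equals $p \cap \ell$; since Item~2 gives $T_P(G) \in \ell$, it suffices to show that the tangent $p$ at $P$ passes through $T_P(G)$, obtained by identifying the tangent direction of $\cC_P$ at $P$ via its description as $\gamma_P(\ell)$. Item~8 is analogous for the tangent at $H$ passing through $T_P(P')$. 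Item~4 applies the same pole-tangent principle at $Q$ for the chord $P'Q$, yielding $\textsf{M}(Q)$ as the pole. Items~5 and~6 transport the tangent line $\ell$ at $Q$ through $T_{P'}$ and $T_P^{-1}$, using $T_{P'}(Q) = P'$ to identify the point of tangency, and the fact $T_{P'}^{-1}(\cC_P) = T_P^{-1}(\cC_P)$ to unify the last two. Finally, Item~7 is a direct affine computation: $\textsf{M}(QQ') = T_P K^{-1} T_{P'}(QQ')$, and with $T_{P'}(Q) = P'$ together with the $P$-$P'$ symmetry of $\textsf{M}$ from III, Proposition~3.12 and Lemma~\ref{lem:commute}, this collapses to $K^{-1}(PP')$.

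The main obstacle will be Item~2, where each of $T_P(P')$, $T_P(G)$, $T_{P'}^{-1}(G)$, and $T_{P'}^{-1}(Q)$ requires a separate affine argument to be pinned onto $\ell$; the bookkeeping of images under $T_P, T_{P'}, K, \textsf{M}$ will consume most of the effort. A secondary hurdle is precisely identifying the tangent lines at $P$ and at $H$ in Items~3 and~8, since the usual tangent characterizations require understanding $\cC_P$ as an affine image of $\ell$ under $\gamma_P$ and translating that back into a line through the correct auxiliary point.
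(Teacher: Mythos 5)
Your overall architecture (establish Item~2 first, then read off the pole--polar statements) matches the paper's, but the load-bearing steps are not actually carried out, and two of them would fail as described. The central gap is in Item~2: you place $T_{P'}^{-1}(G)$ and $T_{P'}^{-1}(Q)$ on $\ell=SQ$ by noting that $T_{P'}(\ell)$ contains $P'=T_{P'}(Q)$ and $K(Q)=T_{P'}(O)$, hence is the line $P'GQ$ --- that part works --- but you then say ``similarly for $T_P^{-1}(\ell)$'' to handle $T_P(P')$ and $T_P(G)$. There is no such symmetry: $\ell=OQ$ with $O=T_{P'}^{-1}(K(Q))$ is defined asymmetrically in $P$ and $P'$ (the $T_P$-analogue of this computation lands you on the different line $O'Q'$), and the only point of $T_P^{-1}(\ell)$ you know a priori is $Q$; showing $G\in T_P^{-1}(\ell)$ is exactly the claim $T_P(G)\in\ell$ you are trying to prove. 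The paper closes this by transporting the single collinear triple $P',G,Q$ under $T_P$ and under $T_{P'}^{-1}$ (both images contain $Q$), then applying $\textsf{M}=T_P\circ K^{-1}\circ T_{P'}$ to the second image, which produces the first image together with $\textsf{M}(Q)$; the two images then share two points, hence coincide, and the common line contains both $Q$ and $\textsf{M}(Q)$, so it is $Q\textsf{M}(Q)=SQ$. Relatedly, your argument for $X\in PQ'$ in Item~1 (perspector of $ABC$ and $A_3B_3C_3$ plus $Q'=K(P)$) is not a proof; the paper deduces $X'\in P'Q$ from $T_P(P'Q)=T_{P'}^{-1}(P'Q)$, so that $\mathcal{S}'$ fixes $P'Q$, and then swaps $P\leftrightarrow P'$.

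The second genuine problem is Items~3 and~8. You propose to identify the tangent to $\cC_P$ at $P$ (resp.\ at $H$) ``via its description as $\gamma_P(\ell)$,'' but no technique for computing tangent lines of $\gamma_P$-images is developed anywhere, and for Item~3 the tangent at $P$ \emph{is} the polar $p=PT_P(G)$, i.e.\ the conclusion --- you would be assuming what you want to prove. The paper instead computes the pole of $PQ$ as the intersection of two already-known polars: $q=SQ$ (polar of $Q$, from Proposition~\ref{prop:OQ}) and $v=GV_\infty$ (polar of the point $V$ on $PQ$, from Part~II), and identifies that intersection with $T_P(G)$ using $T_P(G)G=GV_\infty\pa PP'$. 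Item~8 is likewise obtained not by a tangent-at-$H$ computation but by transporting the known tangent $SQ$ at $Q$ through $T_{P'}^{-1}$ to the point $T_{P'}^{-1}(Q)=T_P^{-1}(H)$ of $T_P^{-1}(\cC_P)$ and then applying $T_P$. Finally, Item~7 does not ``collapse'' by a direct affine computation: since $\textsf{M}$ is a homothety or translation, $\textsf{M}(QQ')$ is parallel to $QQ'=K(PP')$ and hence to $K^{-1}(PP')$, but equality requires a common point, which the paper extracts from Item~4 ($\textsf{M}(Q)$ is the pole of $P'Q$, a line through $G$, hence lies on the polar $VV_\infty=K^{-1}(PP')$ of $G$). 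Your sketches of Items~4, 5 and~6 are essentially the paper's arguments and are fine once Item~2 is actually in place.
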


\noindent {\bf Remark.}  The point $T_{P'}^{-1}(G)$ in the second statement is the centroid of the anticevian triangle of $Q$, and the point $G_1=T_P(G)$ is the centroid of the cevian triangle of $P$.  Also, $\textsf{M}(Q)=T_{P'}(P')$.  \smallskip

\begin{proof} 
By applying $T_P$ to $P'GQ$, we see that $T_P(P'), T_P(G), Q$ are collinear. By applying $T_{P'}^{-1}$ to $P'GQ$, we see that $Q, T_{P'}^{-1}(G), T_{P'}^{-1}(Q)=T_P^{-1}(H)=\tilde H$ are collinear (see III, equation (3)).  Now applying $\textsf{M}=T_P \circ K^{-1} \circ T_{P'}$ to $Q, T_{P'}^{-1}(G), T_{P'}^{-1}(Q)$, we see that $\textsf{M}(Q), T_P(G), T_P(P')$ are collinear.  But these are collinear with $Q$, since $T_P(Q)=Q$!  It follows that $T_P(G)$ and $T_P(P')$ lie on the line $QM(Q) = SQ$.  Applying $\textsf{M}^{-1}=T_{P'}^{-1} \circ K \circ T_P^{-1}$, which fixes this line ($S$ is the center of $\textsf{M}$), we see that $T_{P'}^{-1}(G)$ and $T_{P'}^{-1}(Q)$ lie on the same line. Thus, the points
$$T_P(P'), T_P(G), Q, S, O=\textsf{M}^{-1}(Q), \textsf{M}(Q), T_{P'}^{-1}(G), \ \textrm{and} \ T_{P'}^{-1}(Q)$$
are collinear. This implies that $\mathcal{S}'=T_{P'} \circ T_P$ fixes the line $P'Q$ since $T_P(P'Q) = T_P(P')Q=T_{P'}^{-1}(Q)T_{P'}^{-1}(P')=T_{P'}^{-1}(P'Q)$, so the center of $\mathcal{S}'$, namely $X'$, is on $P'Q$.  Applying the map $\eta$ shows that $X$ is on $PQ'$. Finally, $\mathcal{S}(Q) = T_P \circ T_P'(Q) = T_P(P')$ lies on the line $SQ$, so $X$ is on $SQ$ as well, proving parts (1.) and (2.). \smallskip

With respect to the conic $\cC_P$, the pole $p \cdot q$ of $PQ$ lies on $q = SQ$, by Proposition \ref{prop:OQ}.  Also, since $V$ lies on $PQ$, $p \cdot q$ lies on $v = GV_\infty$ (see II, p. 26). Thus, $p \cdot q$ is the intersection of $GV_\infty$ and $SQ$.  But we already know $T_P(G)$ lies on $SQ$ and $T_P(G)G=G_1G =GV_\infty\pa PP'$ by II, Lemma 2.5; hence, $T_P(G)$ is this intersection, giving (3.).  This implies $PT_P(G)=p$ is the polar of $P$.  \smallskip

Now, $\textsf{M}(Q)$ lies on $SQ = q$, so to prove (4.) we just need to show that $\textsf{M}(Q)$ lies on $p'$, the polar of $P'$.  But $p' = P'T_{P'}(G)$, by (3.) applied to the point $P'$.  Hence, applying $T_{P'}$ to the collinear points $Q, G, P'$, we see that $\textsf{M}(Q) = T_{P'} \circ K^{-1} \circ T_P(Q)=T_{P'}(P')$ lies on $p'$.   \smallskip

Part (5.) of the theorem follows from the fact that the tangent to $\cC_P$ at $Q$, namely $SQ$, lies on $T_{P'}^{-1}(Q)$.  Thus, the tangent to $T_{P'}(\cC_P)$ at $T_{P'}(Q) = P'$ lies on $Q$.  Therefore, this tangent is $P'Q$. \smallskip

Now the tangent to $\cC_P$ at $Q$ goes through $T_P(P')$ so the tangent to $T_P^{-1}(\cC_P)$ at $T_P^{-1}(Q) = Q$ goes through $P'$, i.e. equals $P'Q=QG$. Similarly, since $T_P^{-1}(\cC_P) = T_{P'}^{-1}(\cC_P)$, the tangent at $Q'$ is $Q'G$, so the pole of $QQ'$ with respect to this conic is $G$, giving (6.). \smallskip

For (7.), $P'Q$ goes through $G$, whose polar with respect to $\cC_P$ is $VV_\infty$. Thus, the pole $\textsf{M}(Q)$ of $P'Q$ lies on $VV_\infty$ = $K^{-1}(PP')$ (see II, Proposition 2.3(e)). Similarly, $\textsf{M}(Q')$ lies on $K^{-1}(PP')$, hence $\textsf{M}(QQ') = K^{-1}(PP')$.

Finally, the tangent at $\tilde H=T_P^{-1}(H) = T_{P'}^{-1}(Q)$ to $T_P^{-1}(\cC_P)$ is $T_{P'}^{-1}(SQ)$, since $SQ$ is tangent to $\cC_P$ at $Q$.  The point $P'$ lies on $T_{P'}^{-1}(SQ)$ since $T_{P'}(P') = \textsf{M}(Q)$ lies on $SQ$. Therefore, $T_{P}(P')$ lies on the tangent to $\cC_P$ at $H$.  Note that $\tilde H$ is the midpoint of the segment joining $P'$ and $K^{-1}(H)$, by III, Lemma 3.8, so $T_P(P') \neq H$.  This proves (8.).
\end{proof}

\end{section}

\begin{section}{Circumcevian triangles and the TCC perspector.}

In this section we prove our main theorem.

\begin{defn}  The {\bf circumcevian triangle} of a point $R$ with respect to $ABC$ and the circumconic $\tilde{\C}_O$ is the triangle $A'B'C'$, where $AR, BR, CR$ intersect $\tilde{\C}_O$ in the respective second points $A', B', C'$.
\end{defn}

\begin{prop}
\label{prop:circum}
Assume that $P$ is an ordinary point, not lying on $\iota(l_\infty)$.
\begin{enumerate}[a)]
\item The circumcevian triangle of $Q$ with respect to $ABC$ and $\tilde{\C}_O$ is the triangle
\[A'B'C'=T_{P'}^{-1}(R_1'R_2'R_3'),\]
where $R_1', R_2', R_3'$ are the midpoints of the respective segments $AP', BP', CP'$.
\item The triangle $A'B'C'$ is perspective to the medial triangle $D_0E_0F_0$ from the point $O$.
\item The antipodal triangle of $A'B'C'$ on the conic $\ \tilde{\C}_O$ is the triangle $T_{P'}^{-1}(D_0E_0F_0)$, the medial triangle of the anticevian triangle for $Q$.
\item $A'B'C'$ is homothetic or congruent to the cevian triangle $DEF$ of $P$.
\end{enumerate}
\end{prop}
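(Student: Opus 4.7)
The plan is to prove part (a) first, since it recasts each circumcevian vertex $A'$ as the midpoint of $QQ_a$, after which the remaining parts become nearly mechanical. Since $\tilde{\C}_O$ is the nine-point conic of the complete quadrangle $QQ_aQ_bQ_c$ (III, Theorem 2.4), it contains the midpoints of all six segments joining pairs of these vertices. Because $A$, $Q$, $Q_a$ are collinear (the defining property of the anticevian triangle), the midpoint $\tfrac12(Q+Q_a)$ lies on $AQ\cap\tilde{\C}_O$ and, being generically distinct from $A$, must be the second intersection $A'$. The identity $T_{P'}(A')=\tfrac12(P'+A)=R_1'$ then follows from $T_{P'}(Q)=P'$ and $T_{P'}(Q_a)=A$.

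Part (c) is immediate: since $O=\tfrac14(Q+Q_a+Q_b+Q_c)$ is both the centroid of the four vertices and the center of $\tilde{\C}_O$, the antipode of $A'$ on $\tilde{\C}_O$ is $2O-A'=\tfrac12(Q_b+Q_c)=M_a=T_{P'}^{-1}(D_0)$, and similarly for the other two. Part (d) follows because the correspondence $Q_a\mapsto A'$, $Q_b\mapsto B'$, $Q_c\mapsto C'$ is the homothety centered at $Q$ of ratio $\tfrac12$, so the sides of $A'B'C'$ are parallel to those of $Q_aQ_bQ_c$; and the latter are parallel to those of $DEF$ by I, Theorem 3.9.

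For part (b) it suffices, by the cyclic symmetry, to show $O\in A'D_0$. Part (c) yields that $A'$, $O$, $M_a$ lie on a common diameter of $\tilde{\C}_O$, and by the very definition of $O$, the line through $O$ in direction $QD$ passes through $D_0$. So the task reduces to showing that the diameter $A'OM_a$ has direction $QD$; once this is established, it coincides with the line $OD_0$, giving $A'$, $D_0$, $O$ collinear.

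To identify the direction of $A'OM_a$, I would push the diameter forward under the homothety $\textsf{M}=T_P\circ K^{-1}\circ T_{P'}$ of III, Theorem 3.4, which sends $\tilde{\C}_O$ to the inconic $\I$. Using $T_{P'}(O)=K(Q)$, $K^{-1}(K(Q))=Q$, and the identity $T_P(Q)=Q$ (a short computation, since $Q$ is the unique fixed point of the affine map $T_P$), we obtain $\textsf{M}(O)=Q$; likewise $\textsf{M}(M_a)=T_P(K^{-1}(D_0))=T_P(A)=D$. Thus $\textsf{M}$ carries the diameter $A'OM_a$ onto the line through $Q$ and $D$, namely $QD$, and because $\textsf{M}$ is a homothety or translation, directions are preserved, so $A'OM_a$ has direction $QD$. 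The main obstacle is precisely this last step: the direction of the diameter $A'M_a$ is not visible from its raw definition as the midpoint-to-antipode segment, and the clean push-forward through $\textsf{M}$ hinges on the (easily verified) identity $T_P(Q)=Q$.
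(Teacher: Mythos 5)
Your proof is essentially correct but follows a genuinely different route from the paper's. The paper anchors everything on an imported collinearity ([mmq], Corollary 5(b): $D_0, R_1', A_0', K(Q)$ are collinear), pushes it through $T_{P'}^{-1}$ to get (a) and (b) at once, and then obtains (d) via the computation $A'B'C'=\textsf{R}_O\textsf{M}^{-1}(DEF)$. You instead work entirely inside the quadrangle $QQ_aQ_bQ_c$: identifying $A'$ as the midpoint of $QQ_a$ on the nine-point conic $\tilde{\C}_O$, reading off the antipode from $O=\tfrac14(Q+Q_a+Q_b+Q_c)$, getting (d) from the ratio-$\tfrac12$ homothety at $Q$ together with I, Theorem 3.9, and recovering (b) by showing the diameter $A'OM_a$ has direction $QD$ via $\textsf{M}(O)=Q$, $\textsf{M}(M_a)=D$. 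All the facts you invoke are available ($T_{P'}(Q)=P'$, $T_P(Q)=Q$, $\textsf{M}(O)=Q$, and the center of a nine-point conic with respect to $l_\infty$ being the centroid of the quadrangle, which is how the paper itself uses $K(Q)$ as the center of $\N_{P'}$). What your approach buys is independence from the external reference [mmq] and a cleaner part (d); what it costs is the affine/vector computations ($2O-A'$, the centroid formula), which are correct but depart from the paper's insistently synthetic style, and a slightly delicate chain for (b) where the paper gets collinearity for free.

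The one genuine omission is the degenerate case in part (a) that the paper treats at length: if the midpoint of $QQ_a$ coincides with $A$ (equivalently $R_1'=D_3$), your identification of that midpoint with the ``second intersection'' of $AQ$ and $\tilde{\C}_O$ says nothing, and you must separately show that $AQ$ is then tangent to $\tilde{\C}_O$ at $A$, so that the circumcevian vertex is still $A=T_{P'}^{-1}(R_1')$. The paper does this by showing $O$ is the midpoint of $A\tilde A$ and invoking III, Corollary 3.5 to see the tangent at $A$ is parallel to $BC$, hence equals $AQ=K^{-1}(BC)$. ``Generically distinct from $A$'' does not dispose of this case, since the proposition is asserted for all admissible $P$.
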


\begin{proof}
From \cite{mmq}, Corollary 5(b) we know that $D_0, R_1', A_0'$, and $K(Q)$ are collinear (assuming $P'$ is ordinary).  Applying the map $T_{P'}^{-1}$ gives that
\begin{equation}
T_{P'}^{-1}(D_0), \ T_{P'}^{-1}(R_1'), \ T_{P'}^{-1}(A_0') = D_0, \ T_{P'}^{-1}K(Q)=O
\end{equation}
are collinear.  Furthermore, $T_{P'}^{-1}(R_1')$ is the midpoint of the segment $T_{P'}^{-1}(AP')=Q_aQ$, so $T_{P'}^{-1}(R_1')$ lies on the line $AQ$.  Also, $R_1'$ lies on the nine-point conic $\Npp$ of the quadrangle $ABCP'$, so by III, Theorem 2.4, $T_{P'}^{-1}(R_1')$ lies on the conic $\tilde{\C}_O$.  \smallskip

Now there are several cases to consider.  Suppose that $T_{P'}^{-1}(R_1') \neq A$, or equivalently, $R_1' \neq T_{P'}(A) = D_3$.  Then $T_{P'}^{-1}(R_1')$ is the second intersection of $AQ$ with $\tilde{\C}_O$.  This proves a) in the case that none of the midpoints $R_1', R_2', R_3'$ coincides with the respective points $D_3, E_3, F_3$.  \smallskip

Note that if $T_{P'}^{-1}(R_1') = A$, then $R_1'=D_3$ is collinear with $D_0$ and $K(Q)$, so $Q$ lies on the side $K^{-1}(D_0D_3)=K^{-1}(BC)$ of the anticomplementary triangle of $ABC$, as long as $D_0 \neq D_3$.  If $R_1'=D_3=D_0$, then it is easy to see that $P'=K^{-1}(A)$ is a vertex of the anticomplementary triangle $K^{-1}(ABC)$ and $P=P'$, which is excluded.  Thus, at most one of the points $R_1', R_2', R_3'$ can coincide with their counterparts $D_3, E_3, F_3$.  Suppose that $R_1'=D_3$.  As the center of the conic $\Npp$ lying on the points $D_3$ and $D_0$, $K(Q)$ is the midpoint of $D_0D_3$, since we know it is an ordinary point on the line $BC$.  Therefore $O=T_{P'}^{-1}(K(Q))$ is the midpoint of $T_{P'}^{-1}(D_3D_0)=AT_{P'}^{-1}(D_0)$, implying that $\tilde{A}=T_{P'}^{-1}(D_0)$ is the reflection of $A$ in the point $O$, lying on $\tilde{\C}_O$.  Now III, Corollary 3.5 says that the tangent $l$ to $\tilde{\C}_O$ at $\tilde{A}$ is parallel to $BC$.  Since $A$ and $\tilde{A}$ are opposite points on $\tilde{\C}_O$, the tangent line to $\tilde{\C}_O$ at $A$ is also parallel to $BC$, so this tangent is $K^{-1}(BC)=AQ$.  Thus, in this case, $AQ$ only intersects $\tilde{\C}_O$ in the point $T_{P'}^{-1}(R_1')=A$, and $T_{P'}^{-1}(R_1'R_2'R_3')$ is again the circumcevian triangle of $Q$.\smallskip

Part b) follows from the observation that $T_{P'}^{-1}(R_1'), D_0$, and $O$ are collinear, with similar statements for the other vertices.   
Part c) follows from (3) and the fact that $D_0, E_0, F_0$ lie on the nine-point conic $\Npp$ (with respect to $l_\infty$) of the quadrangle $ABCP'$, so the points $T_{P'}^{-1}(D_0), T_{P'}^{-1}(E_0), T_{P'}^{-1}(F_0)$, which are the midpoints of the sides of the anticevian triangle of $Q$, lie on $\tilde{\C}_O=T_{P'}^{-1}(\Npp)$.  As above, the points $R_1', R_2', R_3'$ are distinct from the points $D_0,E_0,F_0$; if $R_1'=D_0$, for example, then $R_1'=D_0=D_3$.  Therefore, $T_{P'}^{-1}(D_0E_0F_0)$ is the antipodal triangle of $A'B'C'$ on $\tilde{\C}_O$. Denoting the half-turn about $O$ by $\textsf{R}_O$, we have
\begin{align*}
A'B'C' &= \textsf{R}_OT_{P'}^{-1}(D_0E_0F_0)=\textsf{R}_OT_{P'}^{-1}K(ABC)\\
&=\textsf{R}_OT_{P'}^{-1}KT_{P}^{-1}(DEF)=\textsf{R}_O \textsf{M}^{-1}(DEF),
\end{align*}
where $\textsf{M}=T_P \circ K^{-1} \circ T_{P'}$ is a homothety or translation, by III, Theorem 3.4.  This shows that corresponding sides of $A'B'C'$ and $DEF$ are parallel, so these triangles are homothetic or congruent, by the converse of Desargues' Theorem.
\end{proof}

\begin{defn}  The {\bf tangential triangle} of $ABC$ with respect to the circumconic $\tilde{\C}_O$ is the triangle whose sides are tangent to $\tilde{\C}_O$ at the points $A, B, C$.
\end{defn}

\begin{thm}
\label{thm:Opers}
Assume that $P$ is an ordinary point, not lying on $\iota(l_\infty)$.  The point $O$ is the perspector of the circumcevian triangle of $Q$ and the tangential triangle of $ABC$ with respect to the conic $\tilde{\C}_O$.
\end{thm}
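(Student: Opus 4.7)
The plan is to identify each vertex of the tangential triangle as a pole with respect to $\tilde\cC_O$, and then conclude via the perspectivity already established in Proposition~\ref{prop:circum}(b). By standard pole--polar duality, the vertex $T_A$ of the tangential triangle opposite $A$ (the intersection of the tangents to $\tilde\cC_O$ at $B$ and $C$) is the pole of the line $BC$ with respect to $\tilde\cC_O$; define $T_B$ and $T_C$ analogously as the poles of $CA$ and $AB$.

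The crucial step will be to show that $O$, $D_0$, $T_A$ are collinear. Because $\tilde\cC_O$ is a central conic with center $O$, the polar of $O$ is $l_\infty$. The pole of the line $OT_A$ is therefore the intersection of the polars of its two points, which is $l_\infty \cdot BC$, i.e.\ the point at infinity in the direction of $BC$. Hence $OT_A$ is the diameter of $\tilde\cC_O$ conjugate to the direction of $BC$, and such a diameter must bisect every chord parallel to $BC$---in particular, it bisects $BC$ itself, so $D_0$ lies on $OT_A$.

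Combining this with Proposition~\ref{prop:circum}(b), which puts $A'$, $D_0$, $O$ on a common line, I obtain that $O, D_0, A', T_A$ are all collinear, so $A'T_A$ passes through $O$. Applying the same argument to the triples $(B', T_B, E_0)$ and $(C', T_C, F_0)$ shows that $B'T_B$ and $C'T_C$ also pass through $O$, exhibiting $O$ as the desired perspector.

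The only real obstacle I foresee is the treatment of degenerate configurations: when one side of $ABC$ is a diameter of $\tilde\cC_O$ (so $O = D_0$ and the tangents at $B, C$ are parallel, making $T_A$ infinite), or the situation discussed in the proof of Proposition~\ref{prop:circum} where $A' = A$ (the case in which $AQ$ is tangent to $\tilde\cC_O$ at $A$). In each of these cases the three points $O, D_0, T_A$ still lie on the unique diameter conjugate to $BC$ (or, when $A'=A$, the tangent at $A$ plays the role of $A'T_A$ and passes through $O$ via the same pole--polar calculation), and the concurrence argument goes through without change; I would verify these short sub-cases explicitly at the end.
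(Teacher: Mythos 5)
Your argument is essentially the paper's own proof: identify the tangential vertices as poles of the sides, observe that $OD_0$ (being the diameter conjugate to the direction of $BC$, since it bisects the chord $BC$) passes through the pole of $BC$, and conclude from Proposition~\ref{prop:circum}(b). The generic case is complete and correct as you have written it.

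One caution about the deferred degenerate case: when $H$ is a vertex, say $H=A$, we have $O=D_0$, and then the line ``$OD_0$'' is undefined and Proposition~\ref{prop:circum}(b) gives no information about where $A'$ sits, so the concurrence argument does \emph{not} go through ``without change.'' What is needed there is Proposition~\ref{prop:circum}(c): $A'$ and $\tilde A=T_{P'}^{-1}(D_0)$ are antipodal on $\tilde\cC_O$, so the tangents at $A'$ and $\tilde A$ are parallel to $BC$ (by III, Corollary 3.5), whence the pole of $OA'$ is the point at infinity on $BC$ and $OA'$ is the conjugate diameter containing the (now infinite) point $T_A$. Your parenthetical remark about the case $A'=A$ is also slightly off: the relevant line is still $OA'=OA$, whose pole is the intersection of $l_\infty$ with the tangent at $A$; since that tangent is parallel to $BC$ in this configuration, the same conjugate-diameter computation applies. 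With these sub-cases filled in as above, your proof coincides with the paper's.
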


\begin{proof}
First assume $H$ is not a vertex, so that $O$ is distinct from the points $D_0, E_0, F_0$.  The point $O$ is the center of the conic $\tilde{\C}_O$, and $D_0$ is the midpoint of the chord $BC$ on this conic, so $OD_0$ lies on the pole $U$ of the line $BC$ and $b=UB$ and $c=UC$ are the tangents to $\tilde{\C}_O$ from $U$.  If $V$ and $W$ are the poles of $AC$ and $AB$, then $UVW$ is the tangential triangle of $ABC$, and $UVW$ is perspective to $D_0E_0F_0$ from the point $O$.  The assertion then follows from Proposition \ref{prop:circum}b).  If $H$ is a vertex, say $H=A$, then $O = D_0$ and $V$ and $W$ still lie on $OB'$ and $OC'$, respectively.  In this case $B$ and $C$ are antipodal points on $\tilde \cC_O$, so the tangents at those points are parallel and meet at $U \in l_\infty$.  To show that $UVW$ and $A'B'C'$ are perspective from $O$, we need to show that $OA'$ lies on $U$.  By Proposition \ref{prop:circum}c), $\tilde A=T_{P'}^{-1}(D_0)$ and $A'$ are antipodal points on $\tilde \cC_O$.  The tangents at $A'$ and $\tilde A$ are parallel to $BC$, so the pole of the line $OA'$ is the point at infinity on $BC$. Thus, $B, C$, and the pole of $OA'$ are collinear, which implies that $OA'$ and the tangents at (i.e. polars of) $B$ and $C$ are concurrent.
\end{proof}

\begin{lem}
If $\gamma$ denotes the isogonal map for triangle $ABC$, the map $\gamma \circ \gamma_P$ is a projective collineation fixing the vertices of $ABC$.
\end{lem}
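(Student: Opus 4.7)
My plan is to show that $\gamma \circ \gamma_P$ takes lines to lines, and then conclude it is a projective collineation. The main tool is Lemma \ref{lem:mapl}, applied both to $\gamma_P$ and to the classical isogonal $\gamma = \gamma_{Ge}$ (which is a valid instance of $\gamma_P$, with $Ge$ the Gergonne point): each sends a line not through a vertex of $ABC$ to a circumconic of $ABC$. Since $\gamma$ is an involution, its restriction to circumconics is the inverse of its own line-to-circumconic bijection, so $\gamma$ sends any circumconic back to a line. Composing, for a generic line $\ell$ the image $\gamma_P(\ell)$ is a circumconic, and $\gamma$ returns a line; hence $\gamma \circ \gamma_P$ takes lines to lines. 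A bijection of the projective plane taking lines to lines is a projective collineation.

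To see that the vertices are fixed, I would note that for any line $\ell$ through $A$ (other than the sides $AB, AC$), the construction defining $\gamma_P$ forces $\gamma_P(R) \in Ah_a(R) = h_a(\ell)$ for each $R \in \ell$, where $h_a(\ell)$ is again a line through $A$ because $h_a(A) = A$. Thus $\gamma_P$ maps the pencil through $A$ into itself, and the same holds for $\gamma$ via reflection in the angle bisector at $A$. The composition $\gamma \circ \gamma_P$ therefore preserves the pencil through $A$ setwise, which forces the corresponding collineation to fix $A$ (the common point of all lines in the pencil); the arguments for $B$ and $C$ are identical.

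The principal obstacle is that $\gamma_P$ and $\gamma$ are not single-valued on all of the plane — each collapses the sides of $ABC$ to opposite vertices — so one must ensure that the line-to-line bijection of the first paragraph really does determine a collineation, not just a set-theoretic correspondence on an open locus. This can be handled by restricting to the open dense subset where both maps are well-defined and invoking the uniqueness clause of the fundamental theorem of projective geometry to extend. A cleaner alternative that bypasses the issue is to use Proposition \ref{prop:Q^2}: the reciprocal conjugation formula $\gamma_P(r, s, t) = (l/r, m/s, n/t)$, composed with the standard $\gamma(r, s, t) = (a^2/r, b^2/s, c^2/t)$, yields the diagonal linear map $(r, s, t) \mapsto (a^2 r/l,\, b^2 s/m,\, c^2 t/n)$, which is manifestly a projective collineation fixing $A = (1,0,0)$, $B = (0,1,0)$, and $C = (0,0,1)$.
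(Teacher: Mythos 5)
Your proposal is genuinely different from the paper's proof, and each of your two routes has an issue worth naming. In route (a), the step ``$\gamma$ sends any circumconic back to a line'' does not follow from Lemma \ref{lem:mapl} plus involutivity alone: the lemma gives an injective map from lines (not through a vertex) to circumconics, but you need \emph{surjectivity} onto all circumconics to conclude that the particular conic $\gamma_P(\ell)$ is of the form $\gamma(m)$ and hence returns to a line under $\gamma$. That surjectivity is true for the classical isogonal map but is nowhere established synthetically; to fill it you would either count parameters, use coordinates, or rerun the projectivity argument of Lemma \ref{lem:mapl} with the Steiner correspondence on the conic replacing the perspectivity through $\ell$ (checking that the common line $AB$ is self-corresponding, so the composite is a perspectivity and the locus degenerates to a line). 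Your second acknowledged obstacle --- extending a collinearity-preserving map from the open dense set where both maps are single-valued to a collineation of the whole plane --- is also left as an appeal to ``the fundamental theorem,'' which is not quite off-the-shelf in this form. The paper avoids both problems at once: it never passes through circumconics at all, but writes $\gamma\circ\gamma_P(R)$ directly as the meet of $A\rho_ah_a(R)$, $B\rho_bh_b(R)$, $C\rho_ch_c(R)$, extends this formula to the sides and vertices by hand, and then shows the induced projectivity between the pencils at $A$ and $B$ is a \emph{perspectivity} because the common line $AB$ is invariant under $\rho_ah_a$ and $\rho_bh_b$ --- so the locus of intersections is a line with no degeneration to worry about. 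Your argument that the vertices are fixed (pencil at $A$ preserved setwise, hence $A$ fixed) is fine.

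Route (b), composing the barycentric formulas $\gamma_P(r,s,t)=(l/r,m/s,n/t)$ and $\gamma(r,s,t)=(a^2/r,b^2/s,c^2/t)$ to get the diagonal map $(r,s,t)\mapsto(a^2r/l,\,b^2s/m,\,c^2t/n)$, is correct and complete as mathematics, and is certainly the shortest path. But note that the paper explicitly quarantines Proposition \ref{prop:Q^2}: it is the one non-synthetic result, and the authors state they do not use it anywhere else. Since this lemma feeds directly into the TCC-Perspector Theorem, resting it on Proposition \ref{prop:Q^2} would undercut the paper's claim that the main theorem has a synthetic proof. So route (b) buys brevity at the cost of the paper's stated methodology, while the paper's argument (and a repaired version of your route (a)) keeps everything projective.
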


\begin{proof}
Let $\rho_a, \rho_b, \rho_c$ denote the reflections in the angle bisectors of angles $A, B$, and $C$.  The definitions of the maps $\gamma$ and $\gamma_P$ imply that $\gamma \circ \gamma_P(R)$ is the intersection of the lines $A\rho_ah_a(R), B\rho_bh_b(R), C\rho_ch_c(R)$, for any point $R$ which is not on a side of $ABC$.  Now, if $R$ is a point on the side $AB$, for example, other than a vertex, this intersection is just $AB \cdot C\rho_ch_c(R)$.  Also, for the vertex $A$, for example, this intersection can naturally be considered to be $A$ itself, since the intersection $ B\rho_bh_b(A) \cdot C\rho_ch_c(A)=BA \cdot CA=A$, and $A\rho_ah_a(A)=A$.  Therefore, we can extend the definition of the map $\gamma \circ \gamma_P$ to coincide with the intersection of $A\rho_ah_a(R), B\rho_bh_b(R), C\rho_ch_c(R)$ for all points $R$.  \smallskip

Let the point $R$ vary on a line $l$, which does not lie on $A$ or $B$.  As in the proof of Lemma \ref{lem:mapl} we have the sequence of projectivities
\[\rho_ah_a(AR) \ \barwedge \ AR \ \stackrel{l}{\doublebarwedge} \ BR \ \barwedge \rho_bh_b(BR).\]
Hence, we have a projectivity $\phi: \rho_ah_a(AR) \ \barwedge \ \rho_bh_b(BR)$ between the pencil on $A$ and the pencil on $B$.  If $R$ is on the line $AB$, then $\rho_ah_a(AR)=\rho_ah_a(AB)=\rho_a(AC)=AB=\rho_bh_b(BA)$.  The common line $AB$ in the pencils on $A$ and $B$ is invariant, so $\phi$ is a perspectivity.  (See \cite{co2}, p. 35.)  Hence $\gamma \circ \gamma_P(R)=\rho_ah_a(AR) \cdot \rho_bh_b(BR)$ varies on a line.  If $l$ lies on $R=C$, then we have $\gamma \circ \gamma_P(C)=C$. \smallskip

This argument shows that $\gamma \circ \gamma_P(l)$ is a line whenever $l$ is not a side of $ABC$, and that $\gamma \circ \gamma_P$ fixes the vertices.  If $R$ varies on $AB$, then since $\rho_ah_a(AB)=\rho_bh_b(AB)=AB$, it is clear that $\gamma \circ \gamma_P(R) = AB \cdot \rho_ch_c(CR)$ varies on $AB$, so the sides are invariant lines.  Thus, $\gamma \circ \gamma_P$ is a collineation.  It is also easy to see this map is a projective collineation, for if $R$ varies on a line not through $A$, for example, then $\gamma \circ \gamma_P(R)$ varies on a line $m$, and
\[R \ \barwedge \ AR \ \barwedge \ \rho_ah_a(AR) \ \barwedge \ \gamma \circ \gamma_P(R)\]
is a projectivity between $l$ and $m$.  This proves the lemma.
\end{proof}

We are now ready to prove the main result of this paper.  Recall from \cite{mm3} that the generalized orthocenter $H$ for $P$ with respect to $ABC$ is the intersection of the lines through the vertices which are parallel, respectively, to the lines $QD, QE, QF$. \bigskip

\noindent {\bf The TCC-Perspector Theorem.} \smallskip
{\it Assume $P$ is an ordinary point and does not lie on $\iota(l_\infty)$.    If $H \neq A, B, C$ is the generalized orthocenter for $P$ with respect to triangle $ABC$, then the isogonal conjugate $\gamma(H)$ is the perspector of the tangential triangle of $ABC$ and the circumcevian triangle of $\gamma(Q)$, both taken with respect to the circumcircle of $ABC$.  In short, $\gamma(H)$ is the TCC-perspector of $\gamma(Q)$ with respect to $ABC$.}  \medskip

\begin{proof}
Apply the collineation $\gamma \circ \gamma_P$ to the result of Theorem \ref{thm:Opers}.  Then $\gamma \circ \gamma_P(O)$ is the perspector of the circumcevian triangle of $\gamma \circ \gamma_P(Q)=\gamma(Q)$ and the tangential triangle of $\gamma \circ \gamma_P(ABC)=ABC$ with respect to the conic $\gamma \circ \gamma_P(\tilde{\C}_O)$.  But Proposition \ref{prop:gammaCO} shows that $\gamma \circ \gamma_P(\tilde{\C}_O)=\gamma(\l_\infty)$ is just the circumcircle of $ABC$ (!), so $\gamma(H)$ is the TCC-perspector of $\gamma(Q)$, as claimed.
\end{proof}

\begin{figure}
\[\includegraphics[width=4.5in]{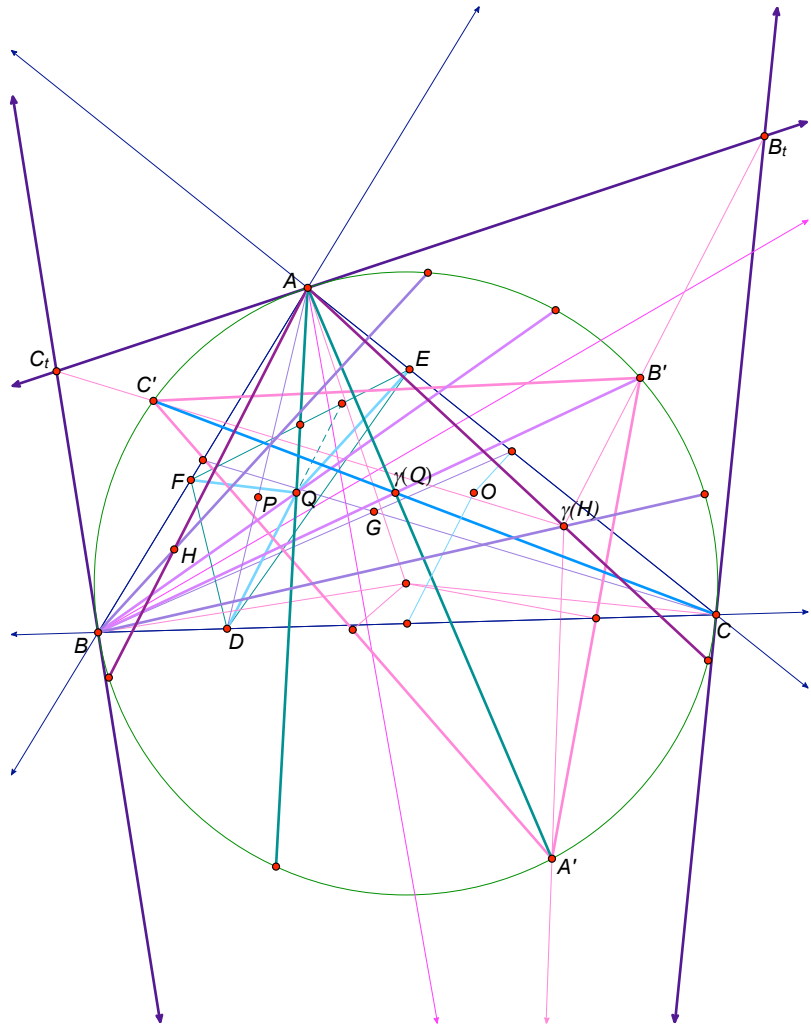}\]
\caption{TCC-perspector $\gamma(H)$ of circumcevian triangle $A'B'C'$ of $\gamma(Q)$ and tangential triangle $A_tB_tC_t$.  (Pink rays from $A$ and $B$ are angle bisectors.)}
\label{fig:4.1}
\end{figure}
\bigskip

This theorem yields the following formula for the TCC-perspector $T(Q)$ of a point $Q$:
\[T(Q)=\gamma \circ K^{-1} \circ T_{R}^{-1} \circ K \circ \gamma(Q),\]
where $R=K^{-1}(\gamma(Q))$.  This is reminiscent of the formula that was proved in Part I for the cyclocevian conjugate of a point $P$.  \medskip

In our final result we give an alternate characterization of the point $\gamma_P(G)$, which is a kind of supplement to the TCC-Perspector Theorem.

\begin{thm}
The perspector of $ABC$ and the tangential triangle $abc$ of $ABC$ with respect to the circumconic $\tilde{\C}_O$ is $\gamma_P(G)$.
\label{thm:persG}
\end{thm}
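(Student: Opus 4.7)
The plan is to transport the corresponding classical fact about the symmedian point through the projective collineation $\Gamma := \gamma \circ \gamma_P$ constructed in the lemma immediately preceding the TCC-Perspector Theorem, exactly in the spirit of that theorem's proof. By that lemma, $\Gamma$ is a projective collineation fixing each of $A$, $B$, $C$; since both $\gamma$ and $\gamma_P$ are involutions (the latter by Proposition \ref{prop:Q^2}), the inverse collineation agrees, where both composites are defined, with $\gamma_P \circ \gamma$.

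Next, I would verify that $\Gamma$ carries $\tilde{\C}_O$ to the circumcircle of $ABC$: by Proposition \ref{prop:gammaCO} we have $\gamma_P(\tilde{\C}_O)=l_\infty$, and $\gamma(l_\infty)$ is the circumcircle of $ABC$ by the classical theorem that the isogonal image of the line at infinity is the circumcircle. Because $\Gamma$ is a projective collineation fixing each vertex, it necessarily sends the tangent to $\tilde{\C}_O$ at $A$ (respectively $B$, $C$) to the tangent to the circumcircle at $A$ (resp.\ $B$, $C$). Hence $\Gamma$ maps the tangential triangle $abc$ of $ABC$ with respect to $\tilde{\C}_O$ onto the classical tangential triangle $a_0b_0c_0$ of $ABC$ with respect to the circumcircle.

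Finally, I would invoke the classical fact that $ABC$ is perspective to its tangential triangle $a_0b_0c_0$ from the symmedian point $\gamma(G)$. Since $\Gamma$ fixes $A$, $B$, $C$ and is a projective collineation (so it preserves concurrence and incidence), applying $\Gamma^{-1}$ to this perspectivity shows that $ABC$ is perspective to $abc$ from the point
\[
\Gamma^{-1}(\gamma(G))=\gamma_P\circ\gamma(\gamma(G))=\gamma_P(G),
\]
which is precisely the claim.

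The only real obstacle is ensuring the tangency transfer in the middle step: one must confirm that, as a projective collineation sending $\tilde{\C}_O$ to the circumcircle and fixing each vertex, $\Gamma$ sends the tangent to $\tilde{\C}_O$ at $A$ to the tangent to the circumcircle at $A$ (and similarly for $B$, $C$). This is automatic because tangency to a conic is a projective notion, but it is the pivotal fact that lets the whole classical statement be pulled back from the circumcircle to $\tilde{\C}_O$, completing the proof.
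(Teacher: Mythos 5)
Your argument is correct, but it takes a genuinely different route from the paper. You transport the classical statement (the symmedian point $\gamma(G)$ is the perspector of $ABC$ and its circumcircle-tangential triangle) backwards through the projective collineation $\gamma\circ\gamma_P$, exactly as in the proof of the TCC-Perspector Theorem: the collineation fixes $A,B,C$, carries $\tilde{\C}_O$ to the circumcircle (via Proposition \ref{prop:gammaCO} and the fact that $\gamma_P$ is an involution, which follows directly from its definition by the reflections $h_a,h_b,h_c$ --- you do not need Proposition \ref{prop:Q^2} for this), hence carries the tangential triangle $abc$ to the classical one, and the perspector pulls back to $\gamma_P\circ\gamma(\gamma(G))=\gamma_P(G)$. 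The paper instead argues directly and internally: it reduces the claim to showing that $AU$ and $AG$ are harmonic conjugates with respect to $AQ$ and $l_a=Q_bQ_c$, sections these four lines by the line $A'\tilde A$ through $D_0$, $O$, and $U$ (using parts b) and c) of Proposition \ref{prop:circum}), and concludes from the involution of conjugate points on $A'\tilde A$ with respect to $\tilde{\C}_O$. Your version is shorter and has the appeal of exhibiting Theorem \ref{thm:persG} and the TCC-Perspector Theorem as two instances of one transport principle, but it imports two classical Euclidean facts as external inputs (that $\gamma(l_\infty)$ is the circumcircle and the symmedian-point perspectivity), whereas the paper's proof stays entirely within its synthetic projective framework, never leaves the conic $\tilde{\C}_O$, and yields the harmonic-conjugate description of the line $AU$ as a byproduct. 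Both proofs implicitly carry the standing hypotheses of Proposition \ref{prop:circum} (that $P$ is ordinary and not on $\iota(l_\infty)$), so neither is more general on that score.
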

\begin{proof}
As in Theorem \ref{thm:Opers}, let $U = b \cdot c$, where $b$ and $c$ are the polars of $B$ and $C$ with respect to $\tilde \cC_O$.  It is enough to show that $\gamma_P(G)$ lies on $AU$; similar arguments for the other two sides imply the theorem. From the definition of $\gamma_P$, we just need to show that the lines $AU$ and $AG$ are harmonic conjugates with respect to the lines $AQ$ and $l_a=Q_bQ_c = T_{P'}^{-1}(BC)$.  Let $A'B'C'$ be the circumcevian triangle of $Q$ with respect to $ABC$ and $\tilde{\C}_O$, and let $\tilde A = T_{P'}^{-1}(D_0)$.  Parts b) and c) of Proposition \ref{prop:circum} show that the line $A'\tilde A$ lies on $D_0$ and $O$, hence also on $U$ ($OD_0$ and $UD_0$ both lie in the conjugate direction to $BC$ with respect to $\tilde{\C}_O$). Taking the section $\{U, D_0, A', \tilde A\}$ of the four lines $AU, AG, AQ, Q_bQ_c$ by the line $A'\tilde A$ (using the fact that $AQ \cap \tilde{\C}_O = \{A, A'\}$), it is enough to show that the points $U$ and $D_0$ are harmonic conjugates with respect to $A'$ and $\tilde A$. But $A'$ and $\tilde A$ are on $\tilde{\C}_O$ by Proposition \ref{prop:circum}c) and $D_0$ on $BC$ is conjugate to $U = b \cdot c$, so this follows from the involution of conjugate points on $A'\tilde A$.
\end{proof}

\end{section}

\begin{section}{Appendix: two lemmas.}

\begin{lem}
\label{lem:Kmid}
For any ordinary point $R$, the point $K(R)$ is the midpoint of $T_P(R)$ and $T_{P'}(R)$.
\end{lem}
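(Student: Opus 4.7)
The plan is to identify the midpoint map $M(R)=\text{midpoint of } T_P(R) \text{ and } T_{P'}(R)$ with the complement map $K$. Since $T_P, T_{P'}$ are affine (I, Theorem 3.7) and $K$ is affine with $K(A)=D_0$, $K(B)=E_0$, $K(C)=F_0$, and since two affine transformations of the plane agreeing on three non-collinear points coincide, it suffices to check that (i) $M$ is affine, and (ii) $M$ and $K$ agree on $A,B,C$.

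For (i), I would use barycentric/affine coordinates with respect to $ABC$. Any ordinary point has a unique representation $R=\alpha A+\beta B+\gamma C$ with $\alpha+\beta+\gamma=1$, and because $T_P$ and $T_{P'}$ preserve affine combinations,
\[
M(R)\;=\;\alpha\cdot\tfrac{1}{2}(D+D_3)\;+\;\beta\cdot\tfrac{1}{2}(E+E_3)\;+\;\gamma\cdot\tfrac{1}{2}(F+F_3),
\]
which is manifestly affine in $R$. For (ii), I would invoke the defining property of the isotomic conjugate: on each side of $ABC$, the cevian foot of $P'=\iota(P)$ is the reflection of the corresponding foot of $P$ through the midpoint of that side. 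Thus $\tfrac{1}{2}(D+D_3)=D_0=K(A)$, and analogously $\tfrac{1}{2}(E+E_3)=E_0=K(B)$ and $\tfrac{1}{2}(F+F_3)=F_0=K(C)$. Combining (i) and (ii), we get $M=K$, which is the claim.

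There is essentially no obstacle here — the lemma is a coordinate-free repackaging of the defining symmetry of the isotomic conjugate, together with the trivial remark that the ``midpoint of $T_P(R)$ and $T_{P'}(R)$'' depends affinely on $R$. The only care needed is to note that $R$ must be ordinary, so that both $T_P(R)$ and $T_{P'}(R)$ are ordinary (as $T_P$, $T_{P'}$ are affine) and their midpoint is well-defined.
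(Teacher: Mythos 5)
Your proposal is mathematically correct, and it even shares its final step with the paper: both arguments reduce the lemma to the observation that the midpoint map $\textsf{T}(R)=\tfrac12\bigl(T_P(R)+T_{P'}(R)\bigr)$ is affine and agrees with $K$ on the three non-collinear points $A,B,C$ (since $\tfrac12(D+D_3)=D_0$, etc., by the isotomic reflection property). The difference lies entirely in how affineness of $\textsf{T}$ is established. You dispatch it in one line by writing $R=\alpha A+\beta B+\gamma C$ and using the fact that affine maps preserve affine combinations --- indeed, your computation gives $\textsf{T}(R)=\alpha D_0+\beta E_0+\gamma F_0=K(R)$ directly, so step (i) is not even needed as a separate step. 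The paper, by contrast, deliberately refuses this route: its Remark states that the lemma ``is easy to prove using barycentric coordinates'' (citing \cite{mo}) but ``requires some ingenuity to prove synthetically,'' and the Introduction asserts that all proofs in the paper, with one flagged exception, are synthetic. Accordingly, the Appendix devotes over a page to a purely synthetic proof that for \emph{any} two affine maps $\textsf{A}_1,\textsf{A}_2$ the midpoint map is affine, via the axis of the projectivity $T=\textsf{A}_2\circ\textsf{A}_1^{-1}$ between corresponding lines, cross-joins, harmonic conjugates, and a verification that $\textsf{T}$ is a bijection inducing a projectivity on each line. Your approach buys great brevity and transparency; the paper's buys consistency with its synthetic program and, incidentally, a reusable general fact about pairs of affine maps. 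As a standalone proof yours is fine; as a replacement for the paper's it would undercut the authors' stated methodological aim.
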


\noindent {\bf Remark.} This lemma is easy to prove using barycentric coordinates.  See \cite{mo}.  It requires some ingenuity  to prove this synthetically.

\begin{proof}
We shall use the fact that if $\textsf{A}_1$ and $\textsf{A}_2$ are affine maps, then the mapping $\textsf{T}$ which takes a point $R$ to the midpoint of the segment joining $\textsf{A}_1(R)$ to $\textsf{A}_2(R)$ is affine.  If this is the case, then with $\textsf{A}_1=T_P$ and $\textsf{A}_2=T_{P'}$, we know that $\textsf{T}(A)=D_0, \textsf{T}(B)=E_0$, and $\textsf{T}(C)=F_0$.  Hence the affine mapping $\textsf{T}$ agrees with the affine mapping $K$ on three non-collinear points, so they must be the same map. \smallskip

Let  $\textsf{A}_1$ and $\textsf{A}_2$ be arbitrary affine maps, and for the purposes of this proof let $A,B,C$ be three collinear points, with $A_i=\textsf{A}_i(A), B_i=\textsf{A}_i(B), C_i=\textsf{A}_i(C)$.  We will prove that $\textsf{T}$ is affine.  We know that $A_i, B_i, C_i$ are collinear, for $i=1,2$, and the ratios $\frac{|A_1B_1|}{|B_1C_1|}=\frac{|A_2B_2|}{|B_2C_2|}$, since affine maps preserve ratios along a line.  The affine map $T=\textsf{A}_2 \circ \textsf{A}_1^{-1}$ induces a projectivity from $l_1=A_1B_1$ to $l_2=A_2B_2$.  We show first that the midpoints $I,J,K$ of $A_1A_2, B_1B_2$, and $C_1C_2$ are collinear (assuming $l_1 \neq l_2$; otherwise the assertion is trivial).  Let $F =l_1 \cdot l_2$.  The axis $m$ of this projectivity is the join of $T^{-1}(F)$ on $l_1$ and $T(F)$ on $l_2$. (See \cite{co2}, p. 37.)  Let $M_1$ and $M_2$ be the midpoints of the segments $FT^{-1}(F)$ and $T(F)F$, respectively, so that $T(M_1)=M_2$.  \smallskip

If $T(F)=F=T^{-1}(F)$ is an ordinary point, this is easy, because then the triangles $A_1FA_2, B_1FB_2$, and $C_1FC_2$ are all similar (by SAS), so $I, J, K$ are all collinear with $F$.  If $F$ lies on $l_\infty$, then it is also easy to see, because in that case the line $IJ$, which is parallel to $l_1$ and $l_2$, cuts $C_1C_2$ in the same ratio as it cuts $A_1A_2$ and $B_1B_2$, so $I, J, K$ are also collinear in this case.  (If $\textsf{T}$ is not injective on $AB$ and $I=J$, then it is easy to see that $A_1IB_1 \cong A_2IB_2$ by a half-turn, which implies easily that $\textsf{T}(AB)=I$, i.e., that $\textsf{T}$ maps the whole line $AB$ to the point $I$.)  \smallskip

Now assume $F$ is ordinary and $FT(F)T^{-1}(F)$ is a triangle.  Then $n=M_1M_2$ is parallel to the axis $m=T(F)T^{-1}(F)$.  Moreover, $I, J$ and $K$ are distinct.  We will show the points $I, J, K$ lie on the line $n$.  It is enough to prove this for the point $I$.  We redefine $I$ as the point $I=n \cdot A_1A_2$ and have to show $I$ is the midpoint of segment $A_1A_2$.  Let $S=M_1A_2 \cdot M_2A_1$ be the cross-join for the points $M_1, A_1$ and their images $M_2, A_2$ under the projectivity induced by $T$.  The point $S$ lies on the axis $m$.  We know that
$$\frac{|M_2F|}{|FA_2|} = \frac{|M_1T^{-1}(F)|}{|T^{-1}(F)A_1|} = \frac{|M_2S|}{|SA_1|}$$
by the fact that $T$ is affine and the similarity of the triangles $M_2A_1M_1$ and $SA_1T^{-1}(F)$.  Hence, $\frac{|M_2F|}{|FA_2|} = \frac{|M_2S|}{|SA_1|}$.  It follows that triangle $FM_2S$ is similar to $A_2M_2A_1$ and therefore $SF \pa A_1A_2$.  Then using the quadrangle $FM_1SM_2$, with diagonal points $A_1$ and $A_2$ and $I$ on $M_2M_1$ shows that $I$ is the harmonic conjugate of $A_1A_2 \cdot SF = A_1A_2 \cdot l_\infty$, so is the midpoint of $A_1A_2$.  Conversely, given the ordinary point $I \neq M_1, M_2, M_1'$ on the line $n=M_1M_2$, where $M_1'$ is the midpoint of segment $M_1M_2$, let $R$ be the harmonic conjugate of $I$ with respect to $M_1$ and $M_2$.  Then the ordinary point $R$ is also distinct from $M_1, M_2, M_1'$.  If $l$ is the line through $I$ parallel to $FR$, then $l$ intersects the line $l_1$ in a point $A_1$ and $l_2$ in a point $A_2$ such that
$$\frac{|M_1A_1|}{|M_1F|}=\frac{|M_1I|}{|M_1R|}=\frac{|M_2I|}{|M_2R|}=\frac{|M_2A_2|}{|M_2F|}=\frac{|M_2A_2|}{|M_2T(F)|},$$
from which we conclude that $T(A_1)=A_2$ and $I$ is the midpoint of $A_1A_2$.  Since $\textsf{T} \textsf{A}_1^{-1}$ maps $T^{-1}(F), M_1, F$ to $M_1, M_1', M_2$, respectively, we see that $\textsf{T}$ maps line $AB$ onto line $n$.  \smallskip

Thus, the mapping $\textsf{T}$ taking $R$ to the midpoint of $\textsf{A}_1(R)$ and $\textsf{A}_2(R)$ has the property that it maps every line either to a line or a point.  In the former case, if the map $T: l_1 \rightarrow l_2$ (with $l_1 \neq l_2$) has $F=l_1 \cdot l_2$ as a fixed point, then the mapping $\textsf{T}: AB \rightarrow IJ$ is projective.  If $F$ is ordinary, this holds because, as in the third paragraph of the proof above, the triangles $A_1FA_2, B_1FB_2$, and $C_1FC_2$ are similar, so $A_1A_2 \pa B_1B_2 \pa C_1C_2$.  If $O = A_1A_2 \cdot l_\infty$ is the point at infinity on these lines, then we have $A_1B_1C_1 \stackrel{O}{\doublebarwedge} IJK$ is projective, so $ABC \ \barwedge \ IJK$ is projective as well.  If $\textsf{T}$ is a bijection, then since $\textsf{T}$ transforms one range projectively, it is a projective collineation (see \cite{co2}, p. 50), and since it takes ordinary points to ordinary points, it fixes the line $l_\infty$ and must be an affine map.  If the point $F$ is infinite, then the lines $A_1A_2, B_1B_2$, and $C_1C_2$ are either parallel or concurrent, and the same conclusion follows.  \smallskip

In the case under consideration, namely $\textsf{A}_1=T_P$ and $\textsf{A}_2=T_{P'}$, it is not difficult to see that $\textsf{T}$ is a bijection on the whole plane.  Since $\textsf{T}$ maps the vertices $A, B, C$ (going back to our usual notation) to the distinct points $D_0,E_0,F_0$, the above argument shows that every point on the sides of the medial triangle $D_0E_0F_0$ is in the image of $\textsf{T}$.  This implies easily that the map $\textsf{T}$ is $1-1$.  If, for example, $\textsf{T}(U)=\textsf{T}(V)=L$ for ordinary points $U \neq V$, then $\textsf{T}$ maps the whole line $UV$ to the point $L$.  If $S$ is any point for which $\textsf{T}(S) = L' \neq L$, then $\textsf{T}$ maps the lines $US$ and $VS$ to the line $LL'$.  Since $UVS$ is a triangle, then every point of the plane maps to some point on $LL'$, which contradicts the fact that the points $D_0, E_0, F_0$ are in the image of $\textsf{T}$.  Thus, $\textsf{T}$ is $1-1$ and maps lines to lines.  Now the fact that $D_0E_0F_0$ is a triangle implies that every ordinary point is on a line joining two points in the image of $\textsf{T}$, so $\textsf{T}$ is surjective.  \smallskip

Furthermore, the map $T = T_{P'} T_P^{-1}=\lambda$ always has a fixed point.  If $P$ does not lie on a median of triangle $ABC$ or on the Steiner circumellipse $\iota(l_\infty)$, such a fixed point is the center $Z$ of the cevian conic $\C_P$, by II, Theorem 4.1.  If $P \in \iota(l_\infty)$, but does not lie on a median of $ABC$, the same conclusion holds by II, Theorem 4.3.  And if $P$ lies on a median, say on $AG$, where $G$ is the centroid, then the point $F=D_0=AG \cdot BC$ is a fixed point of the map $T=\lambda$.  Hence, $\textsf{T}$ induces a projectivity on every line.  This completes the proof of the lemma.
\end{proof}

The result we need this for is the following, which weakens the hypothesis of III, Proposition 3.12.

\begin{lem}
\label{lem:commute}
For any point $P$, not on the sides of triangles $ABC$ or $K^{-1}(ABC)$, the two maps $T_P \circ K^{-1}$ and $T_{P'} \circ K^{-1}$ commute with each other.  In particular, the map $\textsf{M}=T_P \circ K^{-1} \circ T_{P'}$ is symmetric in $P$ and $P'=\iota(P)$.
\end{lem}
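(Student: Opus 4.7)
The plan is to set $f = T_P \circ K^{-1}$ and $g = T_{P'} \circ K^{-1}$ and prove directly that $f \circ g = g \circ f$. The symmetry statement for $\textsf{M}$ then follows for free by composing this identity on the right with $K$ to cancel the trailing $K^{-1}$'s: the commutation $f g = g f$ rearranges to $T_P \circ K^{-1} \circ T_{P'} \circ K^{-1} = T_{P'} \circ K^{-1} \circ T_P \circ K^{-1}$, and composing with $K$ on the right gives $T_P \circ K^{-1} \circ T_{P'} = T_{P'} \circ K^{-1} \circ T_P$, which is exactly the assertion that $\textsf{M}$ is symmetric in $P$ and $P'$.

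The key input is Lemma \ref{lem:Kmid}. Applied to $R = K^{-1}(S)$ for an arbitrary ordinary point $S$, and using $K(K^{-1}(S)) = S$, it tells us that $S$ is the midpoint of $T_P(K^{-1}(S)) = f(S)$ and $T_{P'}(K^{-1}(S)) = g(S)$. I would rewrite this midpoint relation as the affine-combination identity $g(S) = 2S - f(S)$, the coefficients $2$ and $-1$ summing to $1$ so that the right-hand side is a legitimate affine combination of points.

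From here the computation is essentially automatic, since $f$ is an affine map and therefore respects affine combinations:
\[
f(g(S)) = f\bigl(2S - f(S)\bigr) = 2 f(S) - f(f(S)).
\]
Applying the same midpoint identity with $S$ replaced by $f(S)$ shows that $f(S)$ is the midpoint of $f(f(S))$ and $g(f(S))$, giving $g(f(S)) = 2 f(S) - f(f(S))$ as well. Hence $f(g(S)) = g(f(S))$ for every ordinary $S$, and since $fg$ and $gf$ are both affine, the identity extends to all of the extended plane. The only real work is Lemma \ref{lem:Kmid} itself, whose rather involved synthetic proof (just preceding) delivers the midpoint identity; once that is in hand, the commutativity drops out of a one-line affine-combination calculation, and there is no further obstacle.
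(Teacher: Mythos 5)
Your proof is correct and rests on the same key input as the paper's own argument, namely Lemma \ref{lem:Kmid}: the paper likewise applies that midpoint identity at two different points and concludes because a midpoint together with one endpoint determines the other endpoint. Your packaging of the identity as $g = 2\,\mathrm{id} - f$, so that $g$ is an affine combination of the identity and $f$ and therefore commutes with $f$ automatically, is a clean streamlining of the paper's slightly more hands-on verification of the equivalent identity $T_{P'}^{-1}T_PK^{-1} = K^{-1}T_PT_{P'}^{-1}$, but it is not a genuinely different route.
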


\begin{proof}
The assertion is equivalent to $\textsf{M}=T_PK^{-1}T_{P'} = T_{P'}K^{-1}T_P$, or to
$$T_{P'}^{-1}T_PK^{-1} = K^{-1}T_PT_{P'}^{-1}.$$
By Lemma \ref{lem:Kmid}, for any ordinary point $R$, $R$ is the midpoint of $K^{-1}T_P(R)$ and $K^{-1}T_{P'}(R)$.  Replacing $R$ by $T_{P'}^{-1}(Y)$, we know that $T_{P'}^{-1}(Y)$ is the midpoint of $K^{-1}T_PT_{P'}^{-1}(Y)$ and $K^{-1}(Y)$.  On the other hand, $Y$ is the midpoint of $T_PK^{-1}(Y)$ and $T_{P'}K^{-1}(Y)$, so applying $T_{P'}^{-1}$ gives that  $T_{P'}^{-1}(Y)$ is the midpoint of $T_{P'}^{-1}T_PK^{-1}(Y)$ and $K^{-1}(Y)$.  It follows that $T_{P'}^{-1}T_PK^{-1}(Y)=K^{-1}T_PT_{P'}^{-1}(Y)$ for any ordinary point $Y$, and this implies the assertion.
\end{proof}

\end{section}

\noindent Dept. of Mathematics, Maloney Hall\\
Boston College\\
140 Commonwealth Ave., Chestnut Hill, Massachusetts, 02467-3806\\
{\it e-mail}: igor.minevich@bc.edu
\bigskip

\noindent Dept. of Mathematical Sciences\\
Indiana University - Purdue University at Indianapolis (IUPUI)\\
402 N. Blackford St., Indianapolis, Indiana, 46202\\
{\it e-mail}: pmorton@iupui.edu

\end{document}